\newtheorem{lemma}{Lemma}
\newtheorem{theorem}[lemma]{Theorem}
\newtheorem{corollary}[lemma]{Corollary}
\newtheorem{proposition}[lemma]{Proposition}
\newcounter{rotcount}
\newtheorem{rot}{}[rotcount]
\newenvironment{proofof}{\noindent}{\hfill$\Box$\medskip}
\newenvironment{rotproof}{\noindent}{\hfill$\lozenge$\smallskip}
\newcommand{\ncont}{\nsubseteq}\newcommand{\cont}{\subseteq}
\newcommand{\del}{\backslash}
\newcommand{\defin}{\textbf}
\renewcommand{\int }{{\rm int}}
\title[Rooted prism-minors and disjoint cycles containing a specified edge]{Rooted prism-minors and disjoint cycles containing a specified edge}
\author[J.P. Costalonga]{J.P. Costalonga$^1$}
\author[T.J. Reid]{T.J. Reid$^2$}
\author[H. Wu]{H. Wu$^3$}
\thanks{$^1$Departamento de Matem\'atica, Universidade Federal do Esp\'irito Santo. Av. Fernando Ferrari, 514; Campus de Goiabeiras,
29075-910, Vit\'oria, ES, Brazil. {\upshape joaocostalonga@gmail.com}\\
$^{2,3}$Department of Mathematics. The University of Mississippi. University, MS 38677, USA. {\upshape $^2$mmreid@olemiss.edu}. {\upshape $^3$hwu@olemiss.edu}
}
\begin{document}
\begin{abstract}
 Dirac and Lov\'{a}sz independently characterized the $3$-connected graphs with no pair of vertex-disjoint cycles. Equivalently, they characterized all $3$-connected graphs with no prism-minors. In this paper, we completely characterize the $3$-connected graphs with an edge that is contained in the union of no pair of vertex-disjoint cycles. As applications, we answer the analogous questions for edge-disjoint cycles and for $4$-connected graphs and we completely characterize the $3$-connected graphs with no prism-minor using a specified edge.
\end{abstract}
\maketitle
Key-words: rooted minors; disjoint cycles; independent cycles; graph connectivity.

\section{Introduction}

Results about vertex-disjoint and edge-disjoint cycles have received extensive attention by researchers in graph theory. There are two main lines of research in this area. One line of research provides results that give a sufficient condition for a graph to contain a certain number of vertex-disjoint or edge-disjoint cycles (see, for example, \cite{ Chiba, Corradi, Czygrinow2, Dirac-Erdos, Enomoto, Erdos-Posa, Kierstead1, Kierstead2, Kierstead4, Wang}). Another line of research provides results that classify graphs with no pairs of vertex-disjoint cycles (see, for example, \cite{Dirac} and \cite{Lovaz}). We give some results of the latter type here. Such results are particularly useful in the study of graph structure.

All graphs in this paper contain no loops nor parallel edges unless said otherwise. We use $K_5^-$ to denote the graph obtained by deleting an edge from the complete graph on five vertices. The graphs $K_{3,n}^{\prime}$, $K_{3,n}^{\prime \prime}$, and $K_{3,n}^{\prime \prime \prime}$ are obtained by adding, respectively, one, two, or three edges to a partite class of size three in the graph $K_{3,n}$. We denote by $W_n$ the wheel with $n$ spokes. The following result is independently due to Dirac~\cite{Dirac} and Lov\'{a}sz \cite{Lovaz}: 

\begin{theorem}\label{diracandlovasz}
A $3$-connected graph has no pair of vertex-disjoint cycles if and only if it is isomorphic to $W_n$, $K_5$, $K_5^{{-}}$, $K_{3,n}$, $K_{3,n}^{\prime}$, $K_{3,n}^{\prime \prime}$, or $K_{3,n}^{\prime \prime \prime}$ for some integer $n$ exceeding two.
\end{theorem}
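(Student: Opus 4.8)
\medskip
\noindent\textbf{A proof strategy for Theorem~\ref{diracandlovasz}.} The ``if'' direction is a routine check, one family at a time. Two vertex-disjoint cycles occupy at least six vertices, which settles $K_5$ and $K_5^{-}$. In a wheel $W_n$ with hub $h$, the graph $W_n-h$ is the rim cycle, so the rim is the unique cycle avoiding $h$; any cycle through $h$ contains two rim-neighbours of $h$ and hence meets the rim, and two cycles through $h$ meet at $h$ --- so $W_n$ has no two disjoint cycles. In each of $K_{3,n},K_{3,n}^{\prime},K_{3,n}^{\prime\prime},K_{3,n}^{\prime\prime\prime}$ the large colour class is independent, so every cycle meets the three-element class $X$, and in fact meets $X$ in at least two vertices (a cycle meeting $X$ in a single vertex $x$ would contain a large-class vertex both of whose cycle-neighbours equal $x$); as $|X|=3$, two disjoint cycles cannot coexist.

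For the ``only if'' direction, let $G$ be $3$-connected with no two vertex-disjoint cycles. The plan is to organise the argument around a minimum vertex set $S$ with $G-S$ a forest. Since a leaf of the forest $G-v$ has degree at most one there and at least three in $G$, it would need two parallel edges to $v$; with $|V(G)|\ge 4$ this forbids $|S|=1$, while $|S|=0$ would make $G$ a forest, so $|S|\ge 2$. The crux is to prove $|S|\le 3$: equivalently, choosing a longest cycle $C$ of $G$ (so that $G-V(C)$ is a forest), one must show that the $C$-bridges attach to $C$ in a severely restricted pattern. The tool is an exchange argument --- a longest cycle cannot carry a bridge with three attachments cutting $C$ into three arcs each having an internal vertex, nor two bridges with interleaved attachments, since otherwise one either enlarges $C$ through the bridge or extracts two disjoint cycles by routing one through a bridge and a second through a complementary arc of $C$ closed off by a chord or by another bridge. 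Iterating this shows that all cycles of $G$ are met by at most three vertices.

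So $|S|\in\{2,3\}$. If $|S|=3$, write $S=\{x,y,z\}$: every leaf of the forest $G-S$ is adjacent to at least two vertices of $S$ and every isolated vertex to all three, $3$-connectedness forbids a component of $G-S$ attaching to only two vertices of $S$, and a further case analysis --- in which a large forest again produces two disjoint cycles --- leaves only configurations on at most five vertices, whence $G\in\{K_5,K_5^{-}\}$. If $|S|=2$, write $S=\{u,v\}$: every leaf of the forest $F=G-\{u,v\}$ is adjacent to both $u$ and $v$, so two components of $F$ would yield disjoint cycles through $u$ and through $v$; hence $F$ is a tree. If $F$ is a path, the degree constraints and the absence of two disjoint cycles force (apart from a few small graphs, which are on the list) one of $u,v$ to be adjacent to every vertex of $F$ while the other is adjacent to both ends, and a short degree count then gives $G=W_n$. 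If $F$ is a tree that is not a path, then an internal vertex of some branch has a neighbour in $\{u,v\}$ and, together with the branches at a vertex of $F$-degree at least three, produces two disjoint cycles; so $F=K_{1,t}$ for some $t\ge 3$ with centre $c$, each leaf is joined to both $u$ and $v$, and --- according to which of the (at most three) possible edges among $u,v,c$ are present --- $G$ is one of $K_{3,t},K_{3,t}^{\prime},K_{3,t}^{\prime\prime},K_{3,t}^{\prime\prime\prime}$. The finitely many graphs on at most five vertices are checked directly and also lie on the list.

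The main obstacle is the structural bound $|S|\le 3$ --- equivalently, the assertion that the bridges of a longest cycle are as tame as described above. A priori a longest cycle could carry many bridges interacting intricately, and it is precisely the extremal/exchange argument excluding this that carries the weight of the proof; once it is available, the remaining case analysis is lengthy but elementary, being driven entirely by the minimum-degree condition imposed by $3$-connectedness.
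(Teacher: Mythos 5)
The paper does not prove Theorem~\ref{diracandlovasz} at all: it is quoted as a classical result of Dirac and Lov\'asz with citations, so there is no in-paper proof to compare yours against. Judged on its own, your ``if'' direction is complete and correct (the counting argument for $K_5,K_5^-$, the rim argument for $W_n$, and the observation that every cycle of $K_{3,n}^{(\cdot)}$ meets the three-element class in at least two vertices all check out), and your endgame for the ``only if'' direction --- the case analysis once one knows a minimum set $S$ with $G-S$ a forest has $|S|\in\{2,3\}$ --- is essentially the right elementary argument driven by the degree condition.

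The genuine gap is exactly where you flag it: the bound $|S|\le 3$. Everything you write for this step is an assertion, not a proof. First, ``$|S|\le 3$'' and ``the $C$-bridges of a longest cycle attach in a restricted pattern'' are not equivalent; even granting the restricted pattern, you must still exhibit three specific vertices meeting every cycle of $G$, and your sketch never says which ones or why (note that in $W_n$ the longest cycle is Hamiltonian, so its bridges are all chords and $G-V(C)$ is empty --- the bridge analysis you describe does not obviously locate the hub). Second, the exchange argument itself is incomplete: a single bridge with three attachments splitting $C$ into three nondegenerate arcs does not by itself yield two disjoint cycles (one cycle through the bridge leaves only a path of $C$, which needs a further chord or bridge to close up), so the exclusion of such configurations requires an interaction between several bridges that you have not described; and ``iterating this'' to conclude that three vertices hit all cycles is precisely the hard combinatorial core of Lov\'asz's theorem. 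As written, the proposal assumes the substance of the theorem at its central step, so it is an outline rather than a proof.
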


We say that a graph $H$ is a \defin{minor} of a graph $G$ if $H$ is obtained from $G$ by contracting edges, deleting edges and deleting vertices. An \defin{$H$-minor} of a graph $G$ is a minor of $G$ that is isomorphic to a graph $H$.  The \defin{prism} is the graph obtained from two disjoint triangles by adding a perfect matching connecting the vertices of the different triangles. It follows from Menger's Theorem that Theorem \ref{diracandlovasz} is equivalent to the following result:

\begin{theorem}\label{dirac-prism}
A $3$-connected graph has a prism-minor if and only if it is not isomorphic to $W_n$, $K_5$, $K_5^{{-}}$, $K_{3,n}$, $K_{3,n}^{\prime}$, $K_{3,n}^{\prime \prime}$, or $K_{3,n}^{\prime \prime \prime}$ for some integer $n$ exceeding two.
\end{theorem}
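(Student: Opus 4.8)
The plan is to derive Theorem~\ref{dirac-prism} from Theorem~\ref{diracandlovasz} via the claim that, for a $3$-connected graph $G$, \emph{$G$ has a prism-minor if and only if $G$ contains a pair of vertex-disjoint cycles}. Granting this claim the result is immediate: the list of exceptional graphs is the same in both theorems, so $G$ has a prism-minor exactly when $G$ has two vertex-disjoint cycles, which by Theorem~\ref{diracandlovasz} happens exactly when $G$ is not in that list.

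For the forward direction of the claim (which needs no connectivity hypothesis), suppose $G$ has a prism-minor and fix a minor model: six pairwise-disjoint connected branch sets $V_{u_1},V_{u_2},V_{u_3},V_{v_1},V_{v_2},V_{v_3}$, with $u_1u_2u_3$ and $v_1v_2v_3$ the two triangles of the prism, together with an edge of $G$ between the appropriate branch sets for each edge of the prism. For the triangle $u_1u_2u_3$, take the endpoints inside $V_{u_i}$ of the two model edges meeting $V_{u_i}$, join them by a path within the connected set $V_{u_i}$, and string the three resulting paths together with the three model edges of the triangle; since the $V_{u_i}$ are pairwise disjoint, this closed walk is a cycle. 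The analogous construction on $v_1v_2v_3$ yields a second cycle, vertex-disjoint from the first because it lies in a disjoint union of branch sets.

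For the converse, suppose $G$ has vertex-disjoint cycles $C_1$ and $C_2$; here $3$-connectivity is essential. Since $|V(C_1)|,|V(C_2)|\ge 3$, any set of vertices separating $V(C_1)$ from $V(C_2)$ has at least three members, because deleting two or fewer vertices would leave at least one vertex of each $C_i$ in distinct components of $G$, contradicting $3$-connectivity. By Menger's Theorem there are three vertex-disjoint $V(C_1)$--$V(C_2)$ paths $P_1,P_2,P_3$; by the definition of such a path each $P_i$ meets $V(C_1)\cup V(C_2)$ only in its endpoints, say $a_i\in V(C_1)$ and $b_i\in V(C_2)$, and the $a_i$ are distinct, as are the $b_i$. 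I would then build the prism-minor as follows: split $V(C_1)$ into the three arcs cut out by $a_1,a_2,a_3$, adjoin to the arc containing $a_i$ the interior vertices of $P_i$ (keeping the set connected), and take these as three branch sets; do the same with $C_2$ using $b_1,b_2,b_3$. The six resulting branch sets are pairwise disjoint, the arcs realize the two triangles, and each $P_i$ supplies the $i$-th matching edge, so contracting within each branch set gives exactly the prism.

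The main obstacle will be the bookkeeping in this last step: normalizing the Menger paths and then checking, including the small cases (a cycle of length exactly three, or a path $P_i$ of length one), that the prescribed branch sets are connected and disjoint and that the contraction produces the prism with no surplus or missing edges. The remaining ingredients are only a direct appeal to Menger's Theorem together with Theorem~\ref{diracandlovasz}.
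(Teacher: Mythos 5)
Your proposal is correct and is exactly the argument the paper has in mind: the paper offers no written proof beyond the remark that Theorem~\ref{dirac-prism} ``follows from Menger's Theorem'' to be equivalent to Theorem~\ref{diracandlovasz}, and your reduction (prism-minor $\Leftrightarrow$ two vertex-disjoint cycles, with the converse via three disjoint $V(C_1)$--$V(C_2)$ paths) is precisely that equivalence, carried out in detail. The only cosmetic remark is that surplus edges after contraction are harmless, since minors permit edge deletions.
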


We generalize both Theorems \ref{diracandlovasz} and \ref{dirac-prism}. First we discuss the generalization of Theorem \ref{dirac-prism}. We say that a minor $H$ of a graph $G$ \defin{uses} an edge $uv\in E(G)$ if $H$ has an edge $xy$ such that each $z\in\{x,y\}$ either is in $\{u,v\}$ or is obtained by the identification of a set of vertices intersecting $\{u,v\}$ when making the contractions to obtain $H$. Some authors call this a minor {\bf rooted} on $uv$. Determining when a class of graphs (resp. matroids) has a minor using a specified edges (resp. elements) is often useful and important in the study of structure of graphs and matroids. For instance, Seymour~\cite{Seymour-Adjacency} established a result on $K_4$ minors rooted on pairs of edges and derived results of disjoint paths on graphs. Results on the existence of $U_{2,4}$-minors rooted on one or two elements on matroids with $U_{2,4}$-minors (Bixby~\cite{Bixby} and Seymour~\cite{Seymour-2round}, respectivelly) are classic results in matroid theory; a property like this is called {\bf roundedness} and has some variations, for instance, $3$-connected graphs graphs with a minor isomorphic to $K_4$, $K_5\del e$ or $K_5$ have a minor rooted on the edge set of each edge-set of a triangle, provided they have these graphs as minors (\cite{Reid} and \cite{Czhou}) and $3$-conneted graphs with $K_{3,3}$-minors containing a degree $3$-vertex have a minor rooted on the edges adjacent to this vertex~\cite{Truemper}. Our first main result is the following generalization of Theorem \ref{dirac-prism}:

\begin{theorem}\label{thm-prism}
Suppose that $G$ is a $3$-connected graph on at least six vertices and $uv$ is an edge of $G$. Then $G$ has no prism-minor using $uv$ if and only if 
\begin{enumerate}
 \item [(a)] for some $n\ge 3$, $G\cong W_n$, $K_{3,n}$, $K'_{3,n}$, $K''_{3,n}$ or $K'''_{3,n}$ or
 \item [(b)] $G$ has a vertex $w$ such that $\{u,v,w\}$ is a vertex-cut of $G$ and each connected component $K$ of $G\del \{u,v,w\}$ is a tree with an unique neighbor of $w$ in respect to $G$.
\end{enumerate}
\end{theorem}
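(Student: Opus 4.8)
The plan is to translate everything into subdivisions of the prism and into pairs of vertex-disjoint cycles, and to isolate two preliminary reductions before the real work. The first reduction: since the prism is cubic, $G$ has a prism-minor using $uv$ if and only if $G$ has a subgraph that is a subdivision of the prism and contains the edge $uv$ (one builds such a subdivision inside any prism-model by taking Steiner trees of the branch sets, using $uv$ as the edge joining the two branch sets that meet $\{u,v\}$). The second: under the standing hypotheses condition (b) is equivalent to ``$G-\{u,v\}$ is a tree'' — $G-\{u,v\}$ is connected by $3$-connectivity; if (b) holds it is a union of trees each joined to $w$ by one edge, hence a tree; conversely a tree on $\ge 4$ vertices has an internal vertex $w$, and $\{u,v,w\}$ is then a $3$-cut of the desired kind. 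With the first reduction in hand, and using the two edge-orbits of the prism, ``$G$ has a prism-subdivision using $uv$'' unfolds into: either (i) there are disjoint cycles $C_1,C_2$ with $u,v\in V(C_1)$ (the marked edge lies in a triangle; one also uses the theta-graph remark that $u,v\in V(C_1)$ forces a cycle through the edge $uv$ supported on $V(C_1)$), or (ii) there are disjoint cycles $C_1\ni u$, $C_2\ni v$ together with two vertex-disjoint $C_1$--$C_2$ paths avoiding $\{u,v\}$ and internally disjoint from $C_1\cup C_2$ (the marked edge is a matching edge).

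For the ``if'' direction, case (a) is immediate from Theorem~\ref{dirac-prism}, since each graph listed there has no prism-minor at all. For case (b), put $T=G-\{u,v\}$, a tree. If a prism-subdivision used $uv$ then, by (i)/(ii): in (i) the cycle $C_2$ is disjoint from $C_1\ni u,v$, so $C_2\subseteq T$, impossible; in (ii) the paths $C_1-u$ and $C_2-v$ are disjoint subtrees of $T$, and the two extra paths also lie in $T$ and join these subtrees, yet in a tree any two paths between two fixed disjoint subtrees both contain the unique path joining them, so they cannot be disjoint. A small case analysis is needed only because the marked edge of the subdivision might be interior to a matching-path rather than at its end; in every case one either lands a cycle of the subdivision inside $T$ or reduces to the situation just handled.

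For the ``only if'' direction, assume $G$ has no prism-minor using $uv$. If $G$ has no prism-minor at all, Theorem~\ref{dirac-prism} together with $|V(G)|\ge 6$ (ruling out $K_5$ and $K_5^-$) gives case (a). Otherwise $G$ has a prism-minor, and I must show $T=G-\{u,v\}$ is a tree. It is connected; if not a tree it contains a cycle $C$. Let $D$ be the component of $G-V(C)$ containing $u$; then $v\in D$ (because of the edge $uv$), and $3$-connectivity forces $D$ to have at least three neighbours on $C$. No cycle of $D$ meets both $u$ and $v$ — such a cycle, being disjoint from $C$, would trigger alternative (i) — so $uv$ is a bridge of $D$; write $D-uv=D_1\sqcup D_2$ with $u\in D_1$, $v\in D_2$. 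Since $N(D_1)\subseteq V(C)\cup\{v\}$ and $N(D_2)\subseteq V(C)\cup\{u\}$, $3$-connectivity forces each $D_i$ to have at least two neighbours on $C$, and a standard $2$-cut argument then produces two paths from $u$ to $V(C)$ that meet only in $u$, have interiors in $D_1$, and end at distinct vertices $q_1\ne q_1'$ of $C$; symmetrically there are two such paths from $v$ inside $D_2$ ending at distinct $q_2\ne q_2'$.

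The heart of the argument — and what I expect to be the main obstacle — is then to assemble, from $C$, these four paths, the edge $uv$, and, when needed, the other components of $G-V(C)$ (each attached to $C$ at $\ge 3$ vertices), a prism-subdivision using $uv$, contradicting the hypothesis. If $\{q_1,q_1'\}$ and $\{q_2,q_2'\}$ can be chosen non-interleaved on $C$, then $C$ together with the four paths and $uv$ already forms a subdivision of the prism with $uv$ a matching edge; if some $D_i$ has three attachments to $C$ one threads the two cycles through well-chosen arcs to again obtain a prism using $uv$ (as a matching or a triangle edge). The delicate cases are those with exactly two attachments on each side in interleaved (or overlapping) position, where one must invoke an auxiliary component of $G-V(C)$, or an arc of $C$ not used by the two cycles, to supply the missing rung; keeping the two cycles and the two extra paths pairwise disjoint across all of these configurations is where essentially all of the casework resides.
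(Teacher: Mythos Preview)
Your reformulation of (b) as ``$G-\{u,v\}$ is a tree'' is correct and genuinely clarifying, and your ``if'' direction is fine once the sub-cases for where $uv$ sits on the matching branch-path are filled in (they all work: whenever $uv$ is not the whole matching path, one of the two triangles already lands inside the tree $G-\{u,v\}$).

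The gap is in the ``only if'' direction. Your argument, as written, never uses the hypothesis that $G$ has a prism-minor: you pick a cycle $C$ in $G-\{u,v\}$ and try to assemble a prism-subdivision through $uv$ from $C$, the four attachment paths, and auxiliary components of $G-V(C)$. But this implication is \emph{false} without the prism-minor hypothesis. Take $G=W_n$ with $uv$ a rim edge ($n\ge 5$): $G-\{u,v\}$ contains triangles through the hub, yet $G$ has no prism-minor at all. If you run your construction on, say, $W_5$ with $C$ any such triangle, the two attachment pairs $\{q_1,q_1'\}$ and $\{q_2,q_2'\}$ always share the hub, and $G-V(C)$ has no auxiliary component to rescue you. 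The same happens in $K_{3,n}$. So the ``delicate cases'' you flag are not merely bookkeeping: they are exactly where you must bring in the assumption that $G$ is not on the Dirac--Lov\'asz list, and you give no mechanism for doing so. Characterising the stuck configurations and showing they force $G$ to be $W_n$ or $K_{3,n}^{(\cdot)}$ is essentially the whole difficulty of the theorem.

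The paper takes a completely different route: it first proves a full structural classification of $3$-connected $e$-Dirac graphs (Theorem~\ref{main-cor}, with the rope-bridge machinery of Section~\ref{sec-rope bridges}) into types (a), (b), (c), and then checks each type separately (Lemmas~\ref{prism a}, \ref{prism b}, \ref{prism c}). The substantial case analysis you are deferring is effectively the content of Lemma~\ref{prism c}, which is several pages long and relies on the rope-bridge description. Your direct approach would be more elementary if it worked, but as it stands the hard part---showing that the interleaved/overlapping attachment configurations with no spare components force $G$ onto the exceptional list---is missing, and nothing in the sketch suggests how to supply it.
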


There are variants of Theorem \ref{diracandlovasz}, including \cite{Slilaty}. Motivated by Theorem \ref{diracandlovasz}, we consider a much larger class of graphs. The graphs in this class may contain vertex-disjoint cycles, but they do not contain vertex-disjoint cycles which union contains a specified edge. Equivalently, those are the graph with an edge $e$ with the property that, for each cycle $C$ containing $e$, $G-V(C)$ is a forest. The full and detailed characterization is made in Theorem \ref{main-cor}, the second main result in this paper. The statement of this theorem though requires a considerable amount of new terminologies and for this reason we will only state it in the end of Section \ref{sec-rope bridges}. Although the elaborated statement, Theorem \ref{main-cor} has pratical applications and is of independent interest. Indeed, it is used to prove our first main result, Theorem \ref{thm-prism}. Moreover, we apply it to  Theorems \ref{main-strong} and \ref{main-strong-2con} to completely characterize $3$-connected and $2$-connected graphs with no pair of edge-disjoint cycles which union contains a specified edge. A more succinct description of the graphs with no pair of vertex-disjoint cycles which union contains a specified edge is given in Theorem \ref{main}.

The following results completely characterizes the $3$-connected and $2$-connected graphs with no pair of edge-disjoint cycles which union contains a specified edge.

\begin{theorem}\label{main-strong}
If $G$ is a $3$-connected graph with an edge $e=u_1u_2$, then $G$ contains no edge-disjoint cycles using $e$ if and only if $G$ has internally disjoint $(u_1,u_2)$-paths $\alpha=u_1,v_1,\dots,v_n,u_2$ and $\beta=u_1,w_1,\dots,w_n,u_2$ not containing $e$ and there is a family $\mathcal{P}$ of pairwise disjoint pairs of consecutive elements of $\{1,\dots,n\}$ such that 
\begin{enumerate}
\item [(a)] $V(G)= V(\alpha)\cup V(\beta)$ and
\item [(b)] $E(G)=\{v_iw_j,v_jw_i:\{i,j\}\in \mathcal P\}\cup\{v_kw_k:k$ is in no member of $\mathcal P\}\cup E(\alpha)\cup E(\beta)\cup \{e\}$.
\end{enumerate}
\end{theorem}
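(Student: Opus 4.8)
The strategy has two parts, linked by one observation: in a cubic graph any two edge-disjoint cycles are automatically vertex-disjoint, since a common vertex $x$ would be incident with two edges of each cycle, hence with four distinct edges, contradicting $\deg x=3$. So for a cubic graph ``no pair of edge-disjoint cycles using $e$'' and ``no pair of vertex-disjoint cycles using $e$'' are the same condition; this lets us pass between Theorem~\ref{main-strong} and the vertex-disjoint characterisation of Theorem~\ref{main-cor}.

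\emph{Sufficiency.} Let $G$ be as described. A one-line count shows $G$ is cubic: $\deg u_i=3$ (neighbours $u_{3-i}$, one vertex of $\alpha$, one of $\beta$), while each $v_k$ and $w_k$ has its two rail-neighbours together with exactly one rung. We check that $G$ has no pair of vertex-disjoint cycles using $e$. A cycle $C_1$ through $e=u_1u_2$ is $e$ together with a $u_1$--$u_2$ path $P$ in $G\del e$. For each $k$, the ``column'' $\{v_k,w_k\}$ is a vertex cut of $G\del e$ separating $u_1$ from $u_2$: the rails join only consecutive columns, and, because the members of $\mathcal P$ are pairwise disjoint pairs of consecutive indices, no crossed rung joins column $k-1$ to column $k+1$. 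Hence $P$ meets $\{v_k,w_k\}$ for every $k$, so $G-V(C_1)$, which is contained in the ``inner ladder'' $G-\{u_1,u_2\}$, retains at most one vertex in each column. Since every cycle of the inner ladder uses \emph{both} vertices of at least two columns (look at the extreme columns met by the cycle), $G-V(C_1)$ is a forest, so no cycle is vertex-disjoint from $C_1$. By the observation above, $G$ then has no pair of edge-disjoint cycles using $e$.

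\emph{Necessity.} Suppose $G$ is $3$-connected with no pair of edge-disjoint cycles using $e$; a fortiori it has no pair of vertex-disjoint cycles using $e$, so Theorem~\ref{main-cor} presents $G$ as the rope-bridge structure of Section~\ref{sec-rope bridges} --- two $u_1$--$u_2$ ``ropes'' carrying ``planks''. The stronger hypothesis forces this into the stated form. First, $G$ is cubic: a vertex of degree at least $4$ would, by the rope-bridge description, let one route a cycle $C_1$ through $e$ along the ropes and then find a short cycle $C_2$ at that vertex edge-disjoint from $C_1$ (it must share a vertex with $C_1$, since $G$ has no vertex-disjoint pair), contradicting the hypothesis. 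Finally, the cubic rope bridges are exactly the graphs in the statement: the two ropes become $\alpha$ and $\beta$, each plank becomes a rung, and $3$-regularity collapses every ``twist'' to a single transposition of two consecutive columns occurring at pairwise disjoint index-pairs --- i.e.\ a family $\mathcal P$ for which (a) and (b) hold.

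\emph{Main obstacle and an alternative route.} The real work is in the necessity direction: showing that, inside the rope-bridge family, forbidding edge-disjoint cycles forces $3$-regularity, and then listing the $3$-regular members explicitly --- both require a close reading of the structure built in Section~\ref{sec-rope bridges}. One can also proceed directly, without Theorem~\ref{main-cor}: three internally disjoint $u_1$--$u_2$ paths in $G\del e$ (which is $2$-connected) would already yield edge-disjoint cycles using $e$ --- take $e$ with one of them, and the union of the other two --- so $G\del e$ has a $2$-cut separating $u_1$ from $u_2$, which, since $G$ is $3$-connected, splits $G\del e$ into exactly two sides; one then peels off ``columns'' toward $u_1$ one at a time. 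The delicate point in this second route is the bookkeeping of crossed rungs as columns are removed, which is why the argument through Theorem~\ref{main-cor} is cleaner.
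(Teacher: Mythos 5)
Your sufficiency argument is complete and correct (indeed more detailed than the paper's, which merely asserts it), and the observation that edge-disjoint cycles in a cubic graph are automatically vertex-disjoint is a genuinely useful reduction. The necessity direction, however, has a real gap, and you have in effect flagged it yourself. First, Theorem~\ref{main-cor} does \emph{not} present every $e$-Dirac graph as a rope bridge: it has three outcomes, (a) a split wheel, (b) the tree-components structure, and (c), which itself splits into (c1), (c2) (the single rope bridge) and (c3) (several fan/wheel components). Your argument only addresses (c2). Cases (a) and (b) need separate treatment --- the paper's Lemmas~\ref{str type a} and~\ref{str type b} show that the only strongly $e$-Dirac type~(a) graphs are the prism and $K_{3,3}$, and that no type~(b) graph on at least six vertices is strongly $e$-Dirac --- and case (c3) requires its own elimination argument (one shows there are exactly two $u_2$-components, each a single vertex, that $xy\notin E(G)$, and so on).

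Second, the step you defer --- that forbidding edge-disjoint cycles forces $3$-regularity inside the rope-bridge family, and that the cubic rope bridges are exactly the stated graphs --- is the actual content of the theorem in this case, and ``find a short cycle $C_2$ at a degree-$\ge 4$ vertex edge-disjoint from $C_1$'' is not yet an argument: the two cycles must be exhibited, and the offending vertex may be the apex $u_j$ of the rope bridge, so $3$-regularity is not a local matter on the ropes. The paper instead shows the rope bridge must be \emph{strong} (Lemma~\ref{strong rope-bridge criterion}): a step $\sigma$ with an inner vertex $z$ forces $uz\in E$ and makes $\rho_x$ edge-disjoint from the cycle $u,z,\sigma,y_\sigma,\rho_y,u$, while two steps sharing an endvertex make $\rho_x$ edge-disjoint from the cycle formed by the two steps and a segment of $\rho_y$; either configuration extends to a pair of edge-disjoint cycles through $e$ via the paths $\alpha_x,\alpha_y$ outside the component. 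Once strongness is established, Lemma~\ref{str-char1} delivers conclusions (a) and (b) directly. Your cubic observation would be a clean way to finish \emph{after} the structure is pinned down, but it does not substitute for these case analyses; as written, the necessity half is an outline rather than a proof.
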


\begin{figure}[h]
\centering
\begin{tikzpicture}
\tikzstyle{node_style} =[shape = circle,fill = black,minimum size = 2pt,inner sep=1pt]
\node[node_style] (u1) at (0.0,1.5) {};
\node[node_style] (u2) at (10,1.5){};
\node[node_style] (x1) at (1,1) {};
\node[node_style] (x2) at (2,1) {};
\node[node_style] (x3) at (3,1) {};
\node[node_style] (x4) at (4,1) {};
\node[node_style] (x5) at (5,1) {};
\node[node_style] (x6) at (6,1) {};
\node[node_style] (x7) at (7,1) {};
\node[node_style] (x8) at (8,1) {};
\node[node_style] (x9) at (9,1) {};
\node[node_style] (y1) at (1,2) {};
\node[node_style] (y2) at (2,2) {};
\node[node_style] (y3) at (3,2) {};
\node[node_style] (y4) at (4,2) {};
\node[node_style] (y5) at (5,2) {};
\node[node_style] (y6) at (6,2) {};
\node[node_style] (y7) at (7,2) {};
\node[node_style] (y8) at (8,2) {};
\node[node_style] (y9) at (9,2) {};
\draw (u1)--(x1)--(x2)--(x3)--(x4)--(x5)--(x6)--(x7)--(x8)--(x9)--(u2);
\draw (u1)--(y1)--(y2)--(y3)--(y4)--(y5)--(y6)--(y7)--(y8)--(y9)--(u2);
\draw (u1)--(0,0)--(10,0)--(u2);
\node () at (1,0.7) {$v_1$};
\node () at (2,0.7) {$v_2$};
\node () at (3,0.7) {$v_3$};
\node () at (4,0.7) {$v_4$};
\node () at (5,0.7) {$v_5$};
\node () at (6,0.7) {$v_6$};
\node () at (7,0.7) {$v_7$};
\node () at (8,0.7) {$v_8$};
\node () at (9,0.7) {$v_9$};
\node () at (1,2.3) {$w_1$};
\node () at (2,2.3) {$w_2$};
\node () at (3,2.3) {$w_3$};
\node () at (4,2.3) {$w_4$};
\node () at (5,2.3) {$w_5$};
\node () at (6,2.3) {$w_6$};
\node () at (7,2.3) {$w_7$};
\node () at (8,2.3) {$w_8$};
\node () at (9,2.3) {$w_9$};
\node () at (5,-0.2) {$e$};
\node () at (-0.1,1.7) {$u_1$};
\node () at (10,1.7) {$u_2$};
\draw (x1)--(y1);
\draw (x2)--(y3); \draw (x3)--(y2);
\draw (x4)--(y4);\draw(x5)--(y5);
\draw (x6)--(y7); \draw (x7)--(y6);
\draw (x8)--(y9); \draw (x9)--(y8);
\end{tikzpicture}
\caption{An example for Theorem \ref{main-strong} with $n=9$ and $\mathcal{P}=\big\{\{2,3\}, \{6,7\}, \{8,9\} \big\}$.}
\end{figure}
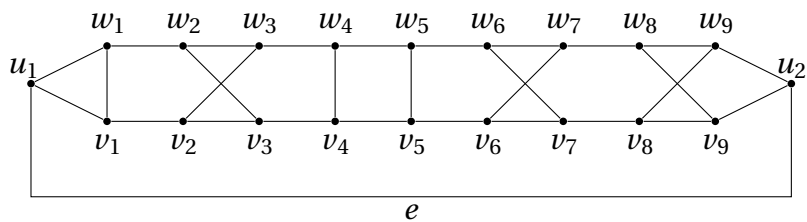

The following theorem generalizes Theorem \ref{main-strong} for $2$-connected graphs.

\begin{theorem}\label{main-strong-2con}
Suppose that $G$ is a $2$-connected graph and $e$ is an edge of $G$. Then $G$ has no pair of edge-disjoint cycles whose union contains $e$ if and only if, for some integer $n\ge 1$, $G$ has subgraphs $G_1,\dots, G_n$ and an $(n+1)$-elements set of vertices $U:=\{u_0,\dots,u_n\}$ such that
\begin{enumerate}
 \item [(a)] $G=(G_1\cup \cdots\cup G_n)+e$ and $e\notin E(G_i)$ for $i=1,\dots,n$;
 \item [(b)] $e=u_0u_n$;
 \item [(c)] for each $0\le i<j\le n$, $u_iu_j\notin E(G)$ if $\{i,j\}\neq\{0,n\}$; 
 \item [(d)] for each $i=1,\dots,n$, $V(G_i)\cap U=\{u_{i-1},u_i\}$;
 \item [(e)] for $1\le i<j\le n$, $V(G_i)\cap V(G_j)=\emptyset$ if $j>i+1$ and $V(G_i)\cap V(G_{i+1})=\{u_i\}$; and
 \item [(f)] for each $i=1,\dots,n$, one of the following assertions holds:
 \begin{enumerate}
  \item [(f1)] $G_i$ is connected with $V(G_i)=\{u_{i-1},u_i\}$,
  \item [(f2)] $G_i$ is a cycle or
  \item [(f3)] $G_i+u_{i-i}u_i$ is a subdivision of a $3$-connected graph with no pair of edge-disjoint cycles whose union contains $u_{i-1}u_i$.
 \end{enumerate}
 \end{enumerate}
\end{theorem}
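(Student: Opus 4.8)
The plan is to prove the two implications separately; the implication ``structure $\Rightarrow$ no such cycles'' is a short gluing argument, while the converse is the substantial part. \textbf{Sufficiency.} Suppose $G$ is built as in (a)--(f). By (d) and (e) the vertices $u_1,\dots,u_{n-1}$ are cut vertices of $G-e$ with the $G_i$ as their bridges, so every cycle of $G$ either lies in a single $G_i$ or passes through $e$, and in the latter case meets each $G_i$ in a $(u_{i-1},u_i)$-path. Thus if $C_1,C_2$ were edge-disjoint cycles with $e\in E(C_1\cup C_2)$, say $e\in E(C_1)$, then $C_2$ avoids $e$, so $C_2\subseteq G_k$ for some $k$ and $P:=C_1\cap G_k$ is a $(u_{k-1},u_k)$-path edge-disjoint from $C_2$. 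If $G_k$ is of type (f1) then $G_k$ is acyclic, contradicting $C_2\subseteq G_k$; if (f2) then $C_2=G_k$ contains the nonempty path $P$ (nonempty since $u_{k-1}\ne u_k$), contradicting edge-disjointness; if (f3) then, using $u_{k-1}u_k\notin E(G_k)$ from (c), $P+u_{k-1}u_k$ and $C_2$ are edge-disjoint cycles of $G_k+u_{k-1}u_k$ whose union contains $u_{k-1}u_k$, and since $G_k+u_{k-1}u_k$ is a subdivision of a $3$-connected $H$, and having a pair of edge-disjoint cycles whose union contains a fixed edge is preserved by subdividing edges, $H$ has such a pair for the edge corresponding to $u_{k-1}u_k$, contradicting (f3).

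\textbf{Necessity, setup.} Assume $G$ is $2$-connected with no pair of edge-disjoint cycles whose union contains $e=pq$. Since $G$ is $2$-connected, every cut vertex of $G-e$ separates $p$ from $q$, so the block--cut tree of $G-e$ is a path: $G-e$ has blocks $B_1,\dots,B_m$ and cut vertices $c_1,\dots,c_{m-1}$ with $c_i\in B_i\cap B_{i+1}$, $p\in B_1$, $q\in B_m$; set $c_0:=p$, $c_m:=q$. Every $(p,q)$-path of $G-e$ is a concatenation $P_1\cdots P_m$ of $(c_{i-1},c_i)$-paths $P_i\subseteq B_i$ that may be chosen independently, and $(G-e)-E(P_1\cdots P_m)=\bigcup_i\big(B_i-E(P_i)\big)$ is a forest precisely when each $B_i-E(P_i)$ is. Hence the hypothesis localises to the statement $(\star_i)$: for each $i$ and each $(c_{i-1},c_i)$-path $P$ of $B_i$, the graph $B_i-E(P)$ is a forest.

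\textbf{Necessity, the key step: classification of one block.} I claim that if $B$ is $2$-connected (or a single edge) with distinguished vertices $a,b$ and $(\star)$ holds, then either (i) $B$ is the edge $ab$ with $V(B)=\{a,b\}$, or (ii) $B$ is a cycle, or (iii) $ab\notin E(B)$ and $B+ab$ is a subdivision of a $3$-connected graph $H$; and in case (iii), using also that a graph $J$ has a pair of edge-disjoint cycles whose union contains an edge $f$ if and only if $J-E(C)$ is not a forest for some cycle $C$ of $J$ through $f$ (a property invariant under subdivision), one checks that $(\star)$ for $(B;a,b)$ is equivalent to $H$ having no such pair for the edge corresponding to $ab$, i.e.\ $H$ is one of the graphs classified in Theorem \ref{main-strong}. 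To prove the claim, assume $B$ is $2$-connected, not a cycle and not an edge; then $B+ab$ is $2$-connected, and it suffices to show that $B+ab$ has no ``non-trivial'' $2$-separation, for then suppressing its degree-$2$ vertices yields a $3$-connected graph. If $\{s,t\}$ were a $2$-cut of $B+ab$ with both sides containing a cycle, then, depending on where $a$, $b$ and the edge $ab$ lie relative to $\{s,t\}$, I would exhibit an $(a,b)$-path $P$ of $B$ that misses every edge of a cycle contained entirely in one side of the cut, so that $B-E(P)$ contains that cycle, contradicting $(\star)$. Granting the claim, set $n:=m$, $u_0:=p$, $u_i:=c_i$ for $1\le i\le m-1$, $u_n:=q$, and $G_i:=B_i$; then (a), (b), (d), (e) are immediate and (f) is the classification applied to each $B_i$.

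\textbf{Main obstacles.} Two points carry the difficulty. First, the $2$-separation analysis in the classification, and especially the case in which $a$ and $b$ lie on opposite sides of the cut $\{s,t\}$ — where $P$ must be routed carefully through $s$ and $t$ so as to leave a cycle untouched — is where the genuine work is. Second, the adjacency condition (c): it can fail only when some block $B_i$ is a cycle on which $c_{i-1}$ and $c_i$ are adjacent, so that $c_{i-1}c_i\in E(G)$; there the block decomposition above must be refined near $B_i$, re-routing the chain along the long $(c_{i-1},c_i)$-arc of $B_i$ and redistributing the edge $c_{i-1}c_i$, and one must likewise dispatch by hand the smallest graphs ($K_3$, the cycles $C_k$, and the case $m=1$). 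Should the direct surgery prove awkward, an alternative organisation is an induction on $|E(G)|$: if $G$ is $3$-connected, Theorem \ref{main-strong} gives the structure with $n=1$; otherwise split $G$ at a $2$-vertex-cut, verify by a lifting argument like the one in the sufficiency part that both augmented pieces inherit the hypothesis (for the appropriate edge), apply the inductive hypothesis, and concatenate the resulting chains — with the bookkeeping needed to keep (c) intact under concatenation being the price.
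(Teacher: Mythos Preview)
Your sufficiency argument is correct and matches the paper's (brief) treatment. For necessity, you propose two routes: a direct block--cut-tree analysis of $G-e$ as the primary approach, and an induction on $|E(G)|$ splitting at a $2$-cut as a fallback. The paper does exactly your fallback: it takes a $2$-cut $\{x,y\}$, writes $G=H\cup K$ with $e\in E(H)$ and $xy\notin E(K)$, shows $H+xy$ is strongly $e$-Dirac and (when $K$ is not just an $xy$-path) that $K+xy$ is strongly $xy$-Dirac, applies induction to both, and concatenates the two chains after observing that the edge $xy$ in the decomposition of $H+xy$ sits in a trivial (f1) slot. So on the level of overall strategy you have anticipated the paper's proof precisely.

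Your primary route is genuinely different and more structural: rather than peeling off one $2$-separation at a time, you decompose $G-e$ into blocks once and for all and classify each block. This is attractive because it explains the chain $G_1,\dots,G_n$ intrinsically. Two cautions, both of which you partly flag. First, your ``key step'' claims that if a block $B$ (not an edge, not a cycle) satisfies $(\star)$ then $B+ab$ is a subdivision of a $3$-connected \emph{simple} graph; your argument shows that every $2$-separation of $B+ab$ has an acyclic side, hence a path side, but that alone does not rule out that the suppressed graph has parallel edges. You need the extra observation that $a$ and $b$ become branch vertices of $B+ab$ (degree $\ge 3$), which forces any parallel class to involve $a$ or $b$, and then $(\star)$ kills those configurations as well; this is not hard but should be said. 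Second, your worry about condition~(c) is real: when a block $B_i$ is a cycle with $c_{i-1}c_i\in E(B_i)$, setting $G_i:=B_i$ violates~(c), and the local ``re-routing'' you suggest does not obviously produce a decomposition satisfying all of (a)--(f) simultaneously (try the graph where $G-e$ is a $4$-cycle glued at a vertex to a triangle containing $q$). The paper's inductive organisation sidesteps this because the (f1) slots only appear as temporary placeholders in $H+xy$ that get replaced by the decomposition of $K+xy$; achieving the same effect in the block picture requires essentially redoing that bookkeeping, so your fallback is not just an alternative but the cleaner way to finish.
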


Next we state a shorter form of Theorem \ref{main-cor}. The \defin{hub} of a wheel graph $W_n$ with $n \geq 4$ spokes is the vertex of degree $n$. If $w$ is a vertex of a $3$-connected graph $G$, then a $3$-connected graph $H$ with an edge $e=uv$ is said to be obtained from $G$ by \defin{splitting} the vertex $w$ by $e$ if the contraction of the edge $e$ from $H$ is the graph $G$ with the vertices $u$ and $v$ of $H$ contracting to form the new vertex $w$ of $G$.

\begin{theorem}\label{main}
Let $G$ be a $3$-connected graph with at least six vertices and $e$ be an edge of $G$. Then $G$ contains no pair of vertex-disjoint cycles whose union contains $e$ if and only if one of the following assertions hold:
\begin{enumerate}
 \item [(a)] for some $n\ge 4$, $G$ is obtained from a wheel graph with $n$ spokes by possibly doubling some spokes and splitting the hub by the edge $e$ so that the resulting graph is simple and $3$-connected and both vertices incident to $e$ have degree at least four;
 \item [(b)] for $e=uv$, $G$ has a vertex $w$ such that $\{u,v,w\}$ is a vertex-cut of $G$ and each connected component $K$ of $G\del \{u,v,w\}$ is a tree with an unique neighbor of $w$ in respect to $G$; or
 \item [(c)] $G$  is a $3$-connected minor  of a graph of \defin{type} (c1), (c2) or (c3) using the edge $e$ as shown in Figures \ref{fig-type1}, \ref{fig-type2},  and \ref{fig-type3}.
 \end{enumerate}
\end{theorem}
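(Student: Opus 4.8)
The plan is to deduce Theorem~\ref{main} from Theorem~\ref{main-cor}, the detailed ``rope bridge'' characterization stated at the end of Section~\ref{sec-rope bridges}; the substance of the argument is the translation of that technical description into the explicit classes (a)--(c).

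For the ``only if'' direction, suppose $G$ is $3$-connected on at least six vertices and $e$ lies in the union of no pair of vertex-disjoint cycles of $G$. Applying Theorem~\ref{main-cor} places $G$ into one of the rope-bridge structures described there, and these fall into two unbounded families together with a bounded collection. The first unbounded family consists exactly of the graphs built from a wheel $W_n$, $n\ge 4$, by doubling some spokes and then splitting the hub along $e$ subject to the simplicity, $3$-connectivity, and degree-$\ge 4$ constraints; this is (a). The second unbounded family is the one in which the ends $u,v$ of $e$ together with a third vertex $w$ form a $3$-cut each side of which is a tree meeting $w$ in a single edge; this is (b). Finally, every remaining rope bridge is a $3$-connected minor, using $e$, of one of the finitely many graphs drawn in Figures~\ref{fig-type1}--\ref{fig-type3}, which is (c). Thus the work is to check, parameter by parameter, that the rope-bridge data is reproduced by (a)--(c) and, conversely, that nothing in (a)--(c) is omitted; in particular one must see that the side conditions in (a) (simplicity, both ends of $e$ of degree at least four, $3$-connectivity) are precisely what $3$-connectivity forces in the rope-bridge picture.

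For the ``if'' direction we verify directly that each graph in (a), (b), (c) has no pair of vertex-disjoint cycles whose union contains $e$, equivalently that $G-V(C)$ is a forest for every cycle $C$ through $e=uv$. In case (a) the only non-spoke edges at $u$ and at $v$ are rim edges, which are incident to neither $u$ nor $v$; hence any cycle $C$ through $e$ consists of $e$ followed by an arc of the rim joining a neighbour of $v$ to a neighbour of $u$, so $V(C)=\{u,v\}\cup(\text{rim arc})$ and $G-V(C)$ is a subpath of the rim. In case (b), writing $K_1,K_2,\dots$ for the components of $G-\{u,v,w\}$, any cycle $C$ through $e$ meets $\{u,v,w\}$ in $u$, $v$, and possibly $w$, and otherwise lies inside at most two of the $K_i$; deleting $V(C)$ then leaves $w$ (if it survives), the untouched $K_i$ as free trees, and subforests of the at most two touched $K_i$, with $w$ joined to these pieces only by single edges, so the result has no cycle. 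In case (c) it suffices to note that the property ``no pair of vertex-disjoint cycles whose union contains the root edge'' passes to $3$-connected minors using that edge: a pair of vertex-disjoint cycles in such a minor lifts to vertex-disjoint connected subgraphs each containing a cycle, and the (uncontracted) root edge of the host can be taken to lie on one of the corresponding lifted cycles, so the minor's pair produces such a pair in the host; the property for (c) then follows once it is checked for the finitely many base graphs of Figures~\ref{fig-type1}--\ref{fig-type3}, which is routine.

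The main obstacle is the matching carried out in the second paragraph: showing that the dichotomy of Theorem~\ref{main-cor} collapses onto (a)--(c) exactly, with no rope-bridge structure left unaccounted for and no spurious overlaps, and in particular that the degree and simplicity conditions attached to the wheel description in (a) coincide with $3$-connectivity of the corresponding rope bridge.
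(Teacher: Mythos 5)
Your proposal matches the paper's treatment: Theorem \ref{main} is obtained there exactly as you describe, by declaring the ``if'' direction a straightforward verification for each of the types (a)--(c) (your sketches for (a), (b) and the minor-closure argument for (c) are correct) and reading the ``only if'' direction off Theorem \ref{main-cor}. The matching you flag as the main obstacle is in fact already absorbed into the statement and proof of Theorem \ref{main-cor}, whose three cases are phrased as producing type (a), (b) and (c) graphs respectively (via Lemmas \ref{all type b}, \ref{ugly}, \ref{components are rope-bridges} and \ref{rope bridge is a minor}), so no further work is needed at this step.
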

\begin{center}
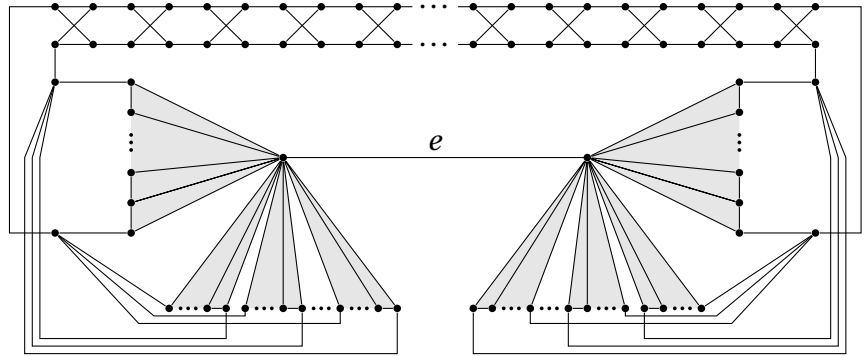
\begin{figure}[h]\begin{tikzpicture}\centering[scale=3]
\tikzstyle{node_style2} = [shape = circle,fill = black,minimum size = 0.5pt,inner sep=0.5pt]
\tikzstyle{node_style} =[shape = circle,fill = black,minimum size = 2pt,inner sep=1pt]
\begin{scope}
	\fill[gray!20,opacity=1] (2,0) -- (0.5,-2) -- (1.25,-2) -- cycle;
	\fill[gray!20,opacity=1] (2,0) -- (1.75,-2) -- (2.5,-2) -- cycle;
	\fill[gray!20,opacity=1] (2,0) -- (2.75,-2) -- (3.5,-2) -- cycle;
	\fill[gray!20,opacity=1] (2,0) -- (4,-1) -- (4,1);
	
	\node[node_style] (v) at (2,0) {};	\node[node_style] (x1) at (4,1) {};\node[node_style] (x2) at (5,1) {};
	\node[node_style] (y1) at (4,-1) {};\node[node_style] (y2) at (5,-1) {};
	
	\node[node_style2] () at (0.9,-2) {};
	\node[node_style2] () at (1,-2) {};
	\node[node_style2] () at (1.1,-2) {};
	\node[node_style] (lfl) at (0.5,-2)  {};\node[node_style] (lfm) at (0.75,-2) {}; \node[node_style] (lfr) at (1.25,-2) {};
	\node[node_style2] () at (1.4,-2){};
	\node[node_style2] () at (1.5,-2){};
	\node[node_style2] () at (1.6,-2){};
	\node[node_style2] () at (2.15,-2){};
	\node[node_style2] () at (2.25,-2){};
	\node[node_style2] () at (2.35,-2){};
	\node[node_style] (mfl) at (1.75,-2) {};\node[node_style] (mfm) at (2,-2) {}; \node[node_style] (mfr) at (2.5,-2) {};
	\node[node_style2] () at (3.15,-2){};
	\node[node_style2] () at (3.25,-2){};
	\node[node_style2] () at (3.35,-2){};
	\node[node_style] (rfl) at (2.75,-2) {};\node[node_style] (rfm) at (3,-2) {}; \node[node_style] (rfr) at (3.5,-2) {};
	
	\draw (y2)--(y1)--(v)--(x1)--(x2);
	\draw (v)--(lfl)--(lfm)--(v)--(lfr);
	\draw (v)--(mfl)--(mfm)--(v)--(mfr);
	
	\draw (v)--(rfl)--(rfm)--(v)--(rfr);
	
	\node[node_style] (1) at (4,-0.6){};
	\node[node_style] (2) at (4,-0.2){};
	\node[node_style] (3) at (4,0.6){};
	\node[node_style2] () at (4,0.1){};
	\node[node_style2] () at (4,0.2){};
	\node[node_style2] () at (4,0.3){};
	\draw (y1)--(1)--(v)--(2)--(1)--(v)--(3)--(x1);	
	
	\draw (y2)--(rfr);
	\draw (y2)--(3.7,-2.1)--(2.5,-2.1)--(2.5,-2);
	\draw (y2)--(3.9,-2.2)--(1.25,-2.2)--(1.25,-2);
	\draw (x2)--(5.2,0)--(5.2,-2.4)--(2.75,-2.4)--(rfl);
	\draw (x2)--(5.3,0)--(5.3,-2.5)--(1.75,-2.5)--(mfl);
	\draw (x2)--(5.4,0)--(5.4,-2.6)--(0.5,-2.6)--(lfl);
	
	\node[node_style] (x3) at (5,1.5){};\node[node_style] (x4) at (4.5,1.5){};
	\node[node_style] (y3) at (5,2){};\node[node_style] (y4) at (4.5,2){};
	\draw (x3)--(y4);\draw(y3)--(x4);
	\node[node_style] (x5) at (4,1.5){};\node[node_style] (x6) at (3.5,1.5){};
	\node[node_style] (y5) at (4,2){};\node[node_style] (y6) at (3.5,2){};
	\draw (x5)--(y6);\draw(y5)--(x6);
	\node[node_style] (x7) at (3,1.5){};\node[node_style] (x8) at (2.5,1.5){};
	\node[node_style] (y7) at (3,2){};\node[node_style] (y8) at (2.5,2){};
	\draw (x7)--(y8);\draw(y7)--(x8);
	\node[node_style] (x9) at (2,1.5){};\node[node_style] (x10) at (1.5,1.5){};
	\node[node_style] (y9) at (2,2){};\node[node_style] (y10) at (1.5,2){};
	\draw (x9)--(y10);\draw(y9)--(x10);
	\node[node_style] (x11) at (1,1.5){};\node[node_style] (x12) at (0.5,1.5){};
	\node[node_style] (y11) at (1,2){};\node[node_style] (y12) at (0.5,2){};
	\draw (x11)--(y12);\draw(y11)--(x12);
	
	\draw (x2)--(x3)--(0.3,1.5);
	\draw (y2)--(5.6,-1)--(5.6,2)--(0.3,2);
\end{scope}

\node (dots) at (0,1.5) {$\dots$};
\node (dots) at (0,2) {$\dots$};
\node (e) at (0,0.2) {$e$};
\draw (-2,0)--(2,0);

\begin{scope}[xscale=-1,yscale=1]
	\fill[gray!20,opacity=1] (2,0) -- (0.5,-2) -- (1.25,-2) -- cycle;
	\fill[gray!20,opacity=1] (2,0) -- (1.75,-2) -- (2.5,-2) -- cycle;
	\fill[gray!20,opacity=1] (2,0) -- (2.75,-2) -- (3.5,-2) -- cycle;
	\fill[gray!20,opacity=1] (2,0) -- (4,-1) -- (4,1);
	
	\node[node_style] (v) at (2,0) {};
	\node[node_style] (x1) at (4,1) {};\node[node_style] (x2) at (5,1) {};
	\node[node_style] (y1) at (4,-1) {};\node[node_style] (y2) at (5,-1) {};
	
	\node[node_style2] () at (0.9,-2) {};
	\node[node_style2] () at (1,-2) {};
	\node[node_style2] () at (1.1,-2) {};
	\node[node_style] (lfl) at (0.5,-2)  {};\node[node_style] (lfm) at (0.75,-2) {}; \node[node_style] (lfr) at (1.25,-2) {};
	\node[node_style2] () at (1.4,-2){};
	\node[node_style2] () at (1.5,-2){};
	\node[node_style2] () at (1.6,-2){};
	\node[node_style2] () at (2.15,-2){};
	\node[node_style2] () at (2.25,-2){};
	\node[node_style2] () at (2.35,-2){};
	\node[node_style] (mfl) at (1.75,-2) {};\node[node_style] (mfm) at (2,-2) {}; \node[node_style] (mfr) at (2.5,-2) {};
	\node[node_style2] () at (3.15,-2){};
	\node[node_style2] () at (3.25,-2){};
	\node[node_style2] () at (3.35,-2){};
	\node[node_style] (rfl) at (2.75,-2) {};\node[node_style] (rfm) at (3,-2) {}; \node[node_style] (rfr) at (3.5,-2) {};
	
	\draw (y2)--(y1)--(v)--(x1)--(x2);
	\draw (v)--(lfl)--(lfm)--(v)--(lfr);
	\draw (v)--(mfl)--(mfm)--(v)--(mfr);
	
	\draw (v)--(rfl)--(rfm)--(v)--(rfr);
	
	\node[node_style] (1) at (4,-0.6){};
	\node[node_style] (2) at (4,-0.2){};
	\node[node_style] (3) at (4,0.6){};
	\node[node_style2] () at (4,0.1){};
	\node[node_style2] () at (4,0.2){};
	\node[node_style2] () at (4,0.3){};
	\draw (y1)--(1)--(v)--(2)--(1)--(v)--(3)--(x1);	
	
	\draw (y2)--(rfr);
	\draw (y2)--(3.7,-2.1)--(2.5,-2.1)--(2.5,-2);
	\draw (y2)--(3.9,-2.2)--(1.25,-2.2)--(1.25,-2);
	\draw (x2)--(5.2,0)--(5.2,-2.4)--(2.75,-2.4)--(rfl);
	\draw (x2)--(5.3,0)--(5.3,-2.5)--(1.75,-2.5)--(mfl);
	\draw (x2)--(5.4,0)--(5.4,-2.6)--(0.5,-2.6)--(lfl);
	
	\node[node_style] (x3) at (5,1.5){};\node[node_style] (x4) at (4.5,1.5){};
	\node[node_style] (y3) at (5,2){};\node[node_style] (y4) at (4.5,2){};
	\draw (x3)--(y4);\draw(y3)--(x4);
	\node[node_style] (x5) at (4,1.5){};\node[node_style] (x6) at (3.5,1.5){};
	\node[node_style] (y5) at (4,2){};\node[node_style] (y6) at (3.5,2){};
	\draw (x5)--(y6);\draw(y5)--(x6);
	\node[node_style] (x7) at (3,1.5){};\node[node_style] (x8) at (2.5,1.5){};
	\node[node_style] (y7) at (3,2){};\node[node_style] (y8) at (2.5,2){};
	\draw (x7)--(y8);\draw(y7)--(x8);
	\node[node_style] (x9) at (2,1.5){};\node[node_style] (x10) at (1.5,1.5){};
	\node[node_style] (y9) at (2,2){};\node[node_style] (y10) at (1.5,2){};
	\draw (x9)--(y10);\draw(y9)--(x10);
	\node[node_style] (x11) at (1,1.5){};\node[node_style] (x12) at (0.5,1.5){};
	\node[node_style] (y11) at (1,2){};\node[node_style] (y12) at (0.5,2){};
	\draw (x11)--(y12);\draw(y11)--(x12);
	
	\draw (x2)--(x3)--(0.3,1.5);
	\draw (y2)--(5.6,-1)--(5.6,2)--(0.3,2);
\end{scope}
\end{tikzpicture}
\caption{The type (c1) graphs.}
\label{fig-type1}
\end{figure}

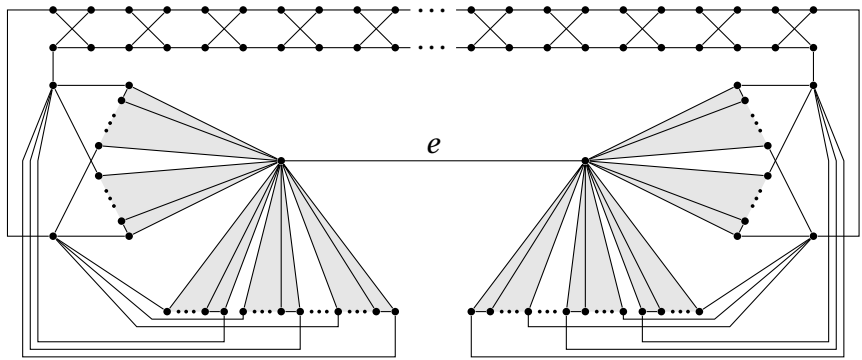
\begin{figure}[h]\begin{tikzpicture}\centering
\tikzstyle{node_style2} = [shape = circle,fill = black,minimum size = 0.5pt,inner sep=0.5pt]
\tikzstyle{node_style} =[shape = circle,fill = black,minimum size = 2pt,inner sep=1pt]
\begin{scope}
	\fill[gray!20,opacity=1] (2,0) -- (0.5,-2) -- (1.25,-2) -- cycle;
	\fill[gray!20,opacity=1] (2,0) -- (1.75,-2) -- (2.5,-2) -- cycle;
	\fill[gray!20,opacity=1] (2,0) -- (2.75,-2) -- (3.5,-2) -- cycle;
	\fill[gray!20,opacity=1] (2,0) -- (4,-1) -- (4.4,-0.2);
	\fill[gray!20,opacity=1] (2,0) -- (4,1) -- (4.4, 0.2);
	
	\node[node_style] (v) at (2,0) {};
	\node[node_style] (x1) at (4,1) {};\node[node_style] (x2) at (5,1) {};
	\node[node_style] (y1) at (4,-1) {};\node[node_style] (y2) at (5,-1) {};
	
	\node[node_style2] () at (0.9,-2) {};
	\node[node_style2] () at (1,-2) {};
	\node[node_style2] () at (1.1,-2) {};
	\node[node_style] (lfl) at (0.5,-2)  {};\node[node_style] (lfm) at (0.75,-2) {}; \node[node_style] (lfr) at (1.25,-2) {};
	\node[node_style2] () at (1.4,-2){};
	\node[node_style2] () at (1.5,-2){};
	\node[node_style2] () at (1.6,-2){};
	\node[node_style2] () at (2.15,-2){};
	\node[node_style2] () at (2.25,-2){};
	\node[node_style2] () at (2.35,-2){};
	\node[node_style] (mfl) at (1.75,-2) {};\node[node_style] (mfm) at (2,-2) {}; \node[node_style] (mfr) at (2.5,-2) {};
	\node[node_style2] () at (3.15,-2){};
	\node[node_style2] () at (3.25,-2){};
	\node[node_style2] () at (3.35,-2){};
	\node[node_style] (rfl) at (2.75,-2) {};\node[node_style] (rfm) at (3,-2) {}; \node[node_style] (rfr) at (3.5,-2) {};
	
	\draw (y2)--(y1)--(v)--(x1)--(x2);
	\draw (v)--(lfl)--(lfm)--(v)--(lfr);
	\draw (v)--(mfl)--(mfm)--(v)--(mfr);
	
	\draw (v)--(rfl)--(rfm)--(v)--(rfr);
	
	\node[node_style] (lofl) at (4.1,-0.8){};\node[node_style] (lofu) at (4.4,-0.2){}; 	\draw (y1)--(lofl)--(v)--(lofu);
	\node[node_style2] (ldot1) at (4.2,-0.6){};
	\node[node_style2] (ldot2) at (4.25,-0.5){};
	\node[node_style2] (ldot3) at (4.3,-0.4){};
	\node[node_style] (upfu) at (4.1,0.8){};\node[node_style] (upfl) at (4.4, 0.2){};	\draw (x1)--(upfu)--(v)--(upfl);
	\node[node_style2] (ldot1) at (4.2,0.6){};
	\node[node_style2] (ldot2) at (4.25,0.5){};
	\node[node_style2] (ldot3) at (4.3,0.4){};
	\draw (lofu)--(x2);
	\draw (upfl)--(y2);
	
	\draw (y2)--(rfr);
	\draw (y2)--(3.7,-2.1)--(2.5,-2.1)--(2.5,-2);
	\draw (y2)--(3.9,-2.2)--(1.25,-2.2)--(1.25,-2);
	\draw (x2)--(5.2,0)--(5.2,-2.4)--(2.75,-2.4)--(rfl);
	\draw (x2)--(5.3,0)--(5.3,-2.5)--(1.75,-2.5)--(mfl);
	\draw (x2)--(5.4,0)--(5.4,-2.6)--(0.5,-2.6)--(lfl);
	
	\node[node_style] (x3) at (5,1.5){};\node[node_style] (x4) at (4.5,1.5){};
	\node[node_style] (y3) at (5,2){};\node[node_style] (y4) at (4.5,2){};
	\draw (x3)--(y4);\draw(y3)--(x4);
	\node[node_style] (x5) at (4,1.5){};\node[node_style] (x6) at (3.5,1.5){};
	\node[node_style] (y5) at (4,2){};\node[node_style] (y6) at (3.5,2){};
	\draw (x5)--(y6);\draw(y5)--(x6);
	\node[node_style] (x7) at (3,1.5){};\node[node_style] (x8) at (2.5,1.5){};
	\node[node_style] (y7) at (3,2){};\node[node_style] (y8) at (2.5,2){};
	\draw (x7)--(y8);\draw(y7)--(x8);
	\node[node_style] (x9) at (2,1.5){};\node[node_style] (x10) at (1.5,1.5){};
	\node[node_style] (y9) at (2,2){};\node[node_style] (y10) at (1.5,2){};
	\draw (x9)--(y10);\draw(y9)--(x10);
	\node[node_style] (x11) at (1,1.5){};\node[node_style] (x12) at (0.5,1.5){};
	\node[node_style] (y11) at (1,2){};\node[node_style] (y12) at (0.5,2){};
	\draw (x11)--(y12);\draw(y11)--(x12);
	
	\draw (x2)--(x3)--(0.3,1.5);
	\draw (y2)--(5.6,-1)--(5.6,2)--(0.3,2);
\end{scope}

\node (dots) at (0,1.5) {$\dots$};
\node (dots) at (0,2) {$\dots$};
\node (e) at (0,0.2) {$e$};
\draw (-2,0)--(2,0);

\begin{scope}[xscale=-1,yscale=1]
	\fill[gray!20,opacity=1] (2,0) -- (0.5,-2) -- (1.25,-2) -- cycle;
	\fill[gray!20,opacity=1] (2,0) -- (1.75,-2) -- (2.5,-2) -- cycle;
	\fill[gray!20,opacity=1] (2,0) -- (2.75,-2) -- (3.5,-2) -- cycle;
	\fill[gray!20,opacity=1] (2,0) -- (4,-1) -- (4.4,-0.2);
	\fill[gray!20,opacity=1] (2,0) -- (4,1) -- (4.4, 0.2);
	
	\node[node_style] (v) at (2,0) {};
	\node[node_style] (x1) at (4,1) {};\node[node_style] (x2) at (5,1) {};
	\node[node_style] (y1) at (4,-1) {};\node[node_style] (y2) at (5,-1) {};
	
	\node[node_style2] () at (0.9,-2) {};
	\node[node_style2] () at (1,-2) {};
	\node[node_style2] () at (1.1,-2) {};
	\node[node_style] (lfl) at (0.5,-2)  {};\node[node_style] (lfm) at (0.75,-2) {}; \node[node_style] (lfr) at (1.25,-2) {};
	\node[node_style2] () at (1.4,-2){};
	\node[node_style2] () at (1.5,-2){};
	\node[node_style2] () at (1.6,-2){};
	\node[node_style2] () at (2.15,-2){};
	\node[node_style2] () at (2.25,-2){};
	\node[node_style2] () at (2.35,-2){};
	\node[node_style] (mfl) at (1.75,-2) {};\node[node_style] (mfm) at (2,-2) {}; \node[node_style] (mfr) at (2.5,-2) {};
	\node[node_style2] () at (3.15,-2){};
	\node[node_style2] () at (3.25,-2){};
	\node[node_style2] () at (3.35,-2){};
	\node[node_style] (rfl) at (2.75,-2) {};\node[node_style] (rfm) at (3,-2) {}; \node[node_style] (rfr) at (3.5,-2) {};
	
	\draw (y2)--(y1)--(v)--(x1)--(x2);
	\draw (v)--(lfl)--(lfm)--(v)--(lfr);
	\draw (v)--(mfl)--(mfm)--(v)--(mfr);
	
	\draw (v)--(rfl)--(rfm)--(v)--(rfr);
	
	\node[node_style] (lofl) at (4.1,-0.8){};\node[node_style] (lofu) at (4.4,-0.2){}; 	\draw (y1)--(lofl)--(v)--(lofu);
	\node[node_style2] (ldot1) at (4.2,-0.6){};
	\node[node_style2] (ldot2) at (4.25,-0.5){};
	\node[node_style2] (ldot3) at (4.3,-0.4){};
	\node[node_style] (upfu) at (4.1,0.8){};\node[node_style] (upfl) at (4.4, 0.2){};	\draw (x1)--(upfu)--(v)--(upfl);
	\node[node_style2] (ldot1) at (4.2,0.6){};
	\node[node_style2] (ldot2) at (4.25,0.5){};
	\node[node_style2] (ldot3) at (4.3,0.4){};
	\draw (lofu)--(x2);
	\draw (upfl)--(y2);
	
	\draw (y2)--(rfr);
	\draw (y2)--(3.7,-2.1)--(2.5,-2.1)--(2.5,-2);
	\draw (y2)--(3.9,-2.2)--(1.25,-2.2)--(1.25,-2);
	\draw (x2)--(5.2,0)--(5.2,-2.4)--(2.75,-2.4)--(rfl);
	\draw (x2)--(5.3,0)--(5.3,-2.5)--(1.75,-2.5)--(mfl);
	\draw (x2)--(5.4,0)--(5.4,-2.6)--(0.5,-2.6)--(lfl);
	
	\node[node_style] (x3) at (5,1.5){};\node[node_style] (x4) at (4.5,1.5){};
	\node[node_style] (y3) at (5,2){};\node[node_style] (y4) at (4.5,2){};
	\draw (x3)--(y4);\draw(y3)--(x4);
	\node[node_style] (x5) at (4,1.5){};\node[node_style] (x6) at (3.5,1.5){};
	\node[node_style] (y5) at (4,2){};\node[node_style] (y6) at (3.5,2){};
	\draw (x5)--(y6);\draw(y5)--(x6);
	\node[node_style] (x7) at (3,1.5){};\node[node_style] (x8) at (2.5,1.5){};
	\node[node_style] (y7) at (3,2){};\node[node_style] (y8) at (2.5,2){};
	\draw (x7)--(y8);\draw(y7)--(x8);
	\node[node_style] (x9) at (2,1.5){};\node[node_style] (x10) at (1.5,1.5){};
	\node[node_style] (y9) at (2,2){};\node[node_style] (y10) at (1.5,2){};
	\draw (x9)--(y10);\draw(y9)--(x10);
	\node[node_style] (x11) at (1,1.5){};\node[node_style] (x12) at (0.5,1.5){};
	\node[node_style] (y11) at (1,2){};\node[node_style] (y12) at (0.5,2){};
	\draw (x11)--(y12);\draw(y11)--(x12);
	
	\draw (x2)--(x3)--(0.3,1.5);
	\draw (y2)--(5.6,-1)--(5.6,2)--(0.3,2);
\end{scope}
\end{tikzpicture}
\caption{The type (c2) graphs.}
\label{fig-type2}
\end{figure}

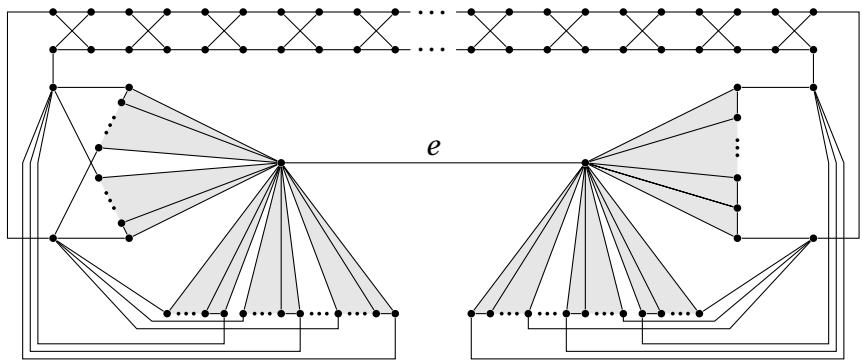
\begin{figure}[h]\begin{tikzpicture}\centering[scale=3]
\tikzstyle{node_style2} = [shape = circle,fill = black,minimum size = 0.5pt,inner sep=0.5pt]
\tikzstyle{node_style} =[shape = circle,fill = black,minimum size = 2pt,inner sep=1pt]
\begin{scope}
	\fill[gray!20,opacity=1] (2,0) -- (0.5,-2) -- (1.25,-2) -- cycle;
	\fill[gray!20,opacity=1] (2,0) -- (1.75,-2) -- (2.5,-2) -- cycle;
	\fill[gray!20,opacity=1] (2,0) -- (2.75,-2) -- (3.5,-2) -- cycle;
	\fill[gray!20,opacity=1] (2,0) -- (4,-1) -- (4,1);
	
	\node[node_style] (v) at (2,0) {};
	\node[node_style] (x1) at (4,1) {};\node[node_style] (x2) at (5,1) {};
	\node[node_style] (y1) at (4,-1) {};\node[node_style] (y2) at (5,-1) {};
	
	\node[node_style2] () at (0.9,-2) {};
	\node[node_style2] () at (1,-2) {};
	\node[node_style2] () at (1.1,-2) {};
	\node[node_style] (lfl) at (0.5,-2)  {};\node[node_style] (lfm) at (0.75,-2) {}; \node[node_style] (lfr) at (1.25,-2) {};
	\node[node_style2] () at (1.4,-2){};
	\node[node_style2] () at (1.5,-2){};
	\node[node_style2] () at (1.6,-2){};
	\node[node_style2] () at (2.15,-2){};
	\node[node_style2] () at (2.25,-2){};
	\node[node_style2] () at (2.35,-2){};
	\node[node_style] (mfl) at (1.75,-2) {};\node[node_style] (mfm) at (2,-2) {}; \node[node_style] (mfr) at (2.5,-2) {};
	\node[node_style2] () at (3.15,-2){};
	\node[node_style2] () at (3.25,-2){};
	\node[node_style2] () at (3.35,-2){};
	\node[node_style] (rfl) at (2.75,-2) {};\node[node_style] (rfm) at (3,-2) {}; \node[node_style] (rfr) at (3.5,-2) {};
	
	\draw (y2)--(y1)--(v)--(x1)--(x2);
	\draw (v)--(lfl)--(lfm)--(v)--(lfr);
	\draw (v)--(mfl)--(mfm)--(v)--(mfr);
	
	\draw (v)--(rfl)--(rfm)--(v)--(rfr);
	
	\node[node_style] (1) at (4,-0.6){};
	\node[node_style] (2) at (4,-0.2){};
	\node[node_style] (3) at (4,0.6){};
	\node[node_style2] () at (4,0.1){};
	\node[node_style2] () at (4,0.2){};
	\node[node_style2] () at (4,0.3){};
	\draw (y1)--(1)--(v)--(2)--(1)--(v)--(3)--(x1);	
	
	\draw (y2)--(rfr);
	\draw (y2)--(3.7,-2.1)--(2.5,-2.1)--(2.5,-2);
	\draw (y2)--(3.9,-2.2)--(1.25,-2.2)--(1.25,-2);
	\draw (x2)--(5.2,0)--(5.2,-2.4)--(2.75,-2.4)--(rfl);
	\draw (x2)--(5.3,0)--(5.3,-2.5)--(1.75,-2.5)--(mfl);
	\draw (x2)--(5.4,0)--(5.4,-2.6)--(0.5,-2.6)--(lfl);
	
	\node[node_style] (x3) at (5,1.5){};\node[node_style] (x4) at (4.5,1.5){};
	\node[node_style] (y3) at (5,2){};\node[node_style] (y4) at (4.5,2){};
	\draw (x3)--(y4);\draw(y3)--(x4);
	\node[node_style] (x5) at (4,1.5){};\node[node_style] (x6) at (3.5,1.5){};
	\node[node_style] (y5) at (4,2){};\node[node_style] (y6) at (3.5,2){};
	\draw (x5)--(y6);\draw(y5)--(x6);
	\node[node_style] (x7) at (3,1.5){};\node[node_style] (x8) at (2.5,1.5){};
	\node[node_style] (y7) at (3,2){};\node[node_style] (y8) at (2.5,2){};
	\draw (x7)--(y8);\draw(y7)--(x8);
	\node[node_style] (x9) at (2,1.5){};\node[node_style] (x10) at (1.5,1.5){};
	\node[node_style] (y9) at (2,2){};\node[node_style] (y10) at (1.5,2){};
	\draw (x9)--(y10);\draw(y9)--(x10);
	\node[node_style] (x11) at (1,1.5){};\node[node_style] (x12) at (0.5,1.5){};
	\node[node_style] (y11) at (1,2){};\node[node_style] (y12) at (0.5,2){};
	\draw (x11)--(y12);\draw(y11)--(x12);
	
	\draw (x2)--(x3)--(0.3,1.5);
	\draw (y2)--(5.6,-1)--(5.6,2)--(0.3,2);
\end{scope}

\node (dots) at (0,1.5) {$\dots$};
\node (dots) at (0,2) {$\dots$};
\node (e) at (0,0.2) {$e$};
\draw (-2,0)--(2,0);

\begin{scope}[xscale=-1,yscale=1]
	\fill[gray!20,opacity=1] (2,0) -- (0.5,-2) -- (1.25,-2) -- cycle;
	\fill[gray!20,opacity=1] (2,0) -- (1.75,-2) -- (2.5,-2) -- cycle;
	\fill[gray!20,opacity=1] (2,0) -- (2.75,-2) -- (3.5,-2) -- cycle;
	\fill[gray!20,opacity=1] (2,0) -- (4,-1) -- (4.4,-0.2);
	\fill[gray!20,opacity=1] (2,0) -- (4,1) -- (4.4, 0.2);
	
	\node[node_style] (v) at (2,0) {};
	\node[node_style] (x1) at (4,1) {};\node[node_style] (x2) at (5,1) {};
	\node[node_style] (y1) at (4,-1) {};\node[node_style] (y2) at (5,-1) {};
	
	\node[node_style2] () at (0.9,-2) {};
	\node[node_style2] () at (1,-2) {};
	\node[node_style2] () at (1.1,-2) {};
	\node[node_style] (lfl) at (0.5,-2)  {};\node[node_style] (lfm) at (0.75,-2) {}; \node[node_style] (lfr) at (1.25,-2) {};
	\node[node_style2] () at (1.4,-2){};
	\node[node_style2] () at (1.5,-2){};
	\node[node_style2] () at (1.6,-2){};
	\node[node_style2] () at (2.15,-2){};
	\node[node_style2] () at (2.25,-2){};
	\node[node_style2] () at (2.35,-2){};
	\node[node_style] (mfl) at (1.75,-2) {};\node[node_style] (mfm) at (2,-2) {}; \node[node_style] (mfr) at (2.5,-2) {};
	\node[node_style2] () at (3.15,-2){};
	\node[node_style2] () at (3.25,-2){};
	\node[node_style2] () at (3.35,-2){};
	\node[node_style] (rfl) at (2.75,-2) {};\node[node_style] (rfm) at (3,-2) {}; \node[node_style] (rfr) at (3.5,-2) {};
	
	\draw (y2)--(y1)--(v)--(x1)--(x2);
	\draw (v)--(lfl)--(lfm)--(v)--(lfr);
	\draw (v)--(mfl)--(mfm)--(v)--(mfr);
	
	\draw (v)--(rfl)--(rfm)--(v)--(rfr);
	
	\node[node_style] (lofl) at (4.1,-0.8){};\node[node_style] (lofu) at (4.4,-0.2){}; 	\draw (y1)--(lofl)--(v)--(lofu);
	\node[node_style2] (ldot1) at (4.2,-0.6){};
	\node[node_style2] (ldot2) at (4.25,-0.5){};
	\node[node_style2] (ldot3) at (4.3,-0.4){};
	\node[node_style] (upfu) at (4.1,0.8){};\node[node_style] (upfl) at (4.4, 0.2){};	\draw (x1)--(upfu)--(v)--(upfl);
	\node[node_style2] (ldot1) at (4.2,0.6){};
	\node[node_style2] (ldot2) at (4.25,0.5){};
	\node[node_style2] (ldot3) at (4.3,0.4){};
	\draw (lofu)--(x2);
	\draw (upfl)--(y2);
	
	\draw (y2)--(rfr);
	\draw (y2)--(3.7,-2.1)--(2.5,-2.1)--(2.5,-2);
	\draw (y2)--(3.9,-2.2)--(1.25,-2.2)--(1.25,-2);
	\draw (x2)--(5.2,0)--(5.2,-2.4)--(2.75,-2.4)--(rfl);
	\draw (x2)--(5.3,0)--(5.3,-2.5)--(1.75,-2.5)--(mfl);
	\draw (x2)--(5.4,0)--(5.4,-2.6)--(0.5,-2.6)--(lfl);
	
	\node[node_style] (x3) at (5,1.5){};\node[node_style] (x4) at (4.5,1.5){};
	\node[node_style] (y3) at (5,2){};\node[node_style] (y4) at (4.5,2){};
	\draw (x3)--(y4);\draw(y3)--(x4);
	\node[node_style] (x5) at (4,1.5){};\node[node_style] (x6) at (3.5,1.5){};
	\node[node_style] (y5) at (4,2){};\node[node_style] (y6) at (3.5,2){};
	\draw (x5)--(y6);\draw(y5)--(x6);
	\node[node_style] (x7) at (3,1.5){};\node[node_style] (x8) at (2.5,1.5){};
	\node[node_style] (y7) at (3,2){};\node[node_style] (y8) at (2.5,2){};
	\draw (x7)--(y8);\draw(y7)--(x8);
	\node[node_style] (x9) at (2,1.5){};\node[node_style] (x10) at (1.5,1.5){};
	\node[node_style] (y9) at (2,2){};\node[node_style] (y10) at (1.5,2){};
	\draw (x9)--(y10);\draw(y9)--(x10);
	\node[node_style] (x11) at (1,1.5){};\node[node_style] (x12) at (0.5,1.5){};
	\node[node_style] (y11) at (1,2){};\node[node_style] (y12) at (0.5,2){};
	\draw (x11)--(y12);\draw(y11)--(x12);
	
	\draw (x2)--(x3)--(0.3,1.5);
	\draw (y2)--(5.6,-1)--(5.6,2)--(0.3,2);
\end{scope}
\end{tikzpicture}
\caption{The type (c3) graphs.}
\label{fig-type3}
\end{figure}
\end{center}

A graph as in Theorem~\ref{main} will be called of \label{pg-type}\defin{type} (a), (b), or (c) according to which case of the theorem conclusion it satisfies. A graph may be of more than one type; for instance, the planar type (a) graphs are also type (c) graphs.

For $4$-connected graphs, we have a strengthening of the previous result. A \defin{double-wheel} graph is obtained from a cycle $C$ by adding two new vertices, called \defin{hubs} of the double-wheel, and then adding an edge linking each vertex of $C$ to each one of the hubs. From Theorem \ref{main}, we conclude:

\begin{corollary}\label{4-connected-main}
Let $G$ be a $4$-connected graph with a fixed edge $e$. Then, either $G$ contains a pair of vertex-disjoint cycles using $e$ or $G$ is obtained from a double-wheel by adding the edge $e$ between the two hubs.
\end{corollary}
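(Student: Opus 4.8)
The plan is to deduce Corollary~\ref{4-connected-main} from Theorem~\ref{main} by a short case analysis, after disposing of the small cases. The only $4$-connected graph on at most five vertices is $K_5$, which has too few vertices to contain any pair of vertex-disjoint cycles and which is exactly the double-wheel over a triangle with $e$ added between the two hubs; so the conclusion holds there, and we may assume $|V(G)|\ge 6$. If $G$ has no pair of vertex-disjoint cycles whose union contains $e$, then, being $3$-connected, $G$ falls under one of the cases (a), (b), (c) of Theorem~\ref{main}, and it suffices to establish: (i) no type~(b) graph on at least six vertices is $4$-connected; (ii) a $4$-connected type~(a) graph is a double-wheel with $e$ joining the hubs; and (iii) no type~(c) graph on at least six vertices is $4$-connected.

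Claim~(i) is immediate, since by definition a type~(b) graph has the vertex-cut $\{u,v,w\}$ of size three and hence is not $4$-connected. For claim~(ii), recall that a type~(a) graph has vertex set $\{u,v\}\cup\{r_1,\dots,r_n\}$ with $n\ge 4$, where $r_1\cdots r_n$ is a cycle, $uv=e$, and each $r_i$ is joined to a nonempty subset of $\{u,v\}$. If some $r_i$ is joined to only one of $u,v$, say to $u$, then $N(r_i)=\{r_{i-1},r_{i+1},u\}$ has three elements (here $n\ge 4$ is used) and deleting these three vertices isolates $r_i$ while leaving at least two other vertices, so $G$ has a $3$-cut and is not $4$-connected. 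Hence in the $4$-connected case every $r_i$ is joined to both $u$ and $v$; that is, $G$ is the double-wheel over the cycle $r_1\cdots r_n$ together with the edge $e=uv$ between its hubs. (Conversely one checks directly that any such graph is $4$-connected and, being of type~(a), has no pair of vertex-disjoint cycles using $e$; but this is not needed for the stated dichotomy.)

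Claim~(iii), the analysis of the type~(c) graphs, is the heart of the matter and the step I expect to be the main obstacle. Each type~(c) graph is a $3$-connected minor, using $e$, of one of the graphs displayed in Figures~\ref{fig-type1}, \ref{fig-type2} and~\ref{fig-type3}; equivalently one may argue from the full structural description in Theorem~\ref{main-cor}. One verifies, by working through those descriptions, that each such graph with at least six vertices carries a vertex-cut of size at most three: the ``fan'' blocks and the ``matching-gadget'' segments of the displayed graphs are each attached to the remainder by only three vertices, and no sequence of deletions and contractions that keeps $e$ alive and keeps the graph simple and $3$-connected can destroy every one of these small cuts without reducing the graph to at most five vertices, in which case the $4$-connected outcome is again $K_5$, already handled. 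The bookkeeping for this---tracking how $3$-separations behave under the minor operations permitted in the definition of type~(c), together with the constraint that both ends of $e$ keep degree at least four---is routine but delicate, and constitutes the technical core. With (i)--(iii) in place, a $4$-connected graph with no pair of vertex-disjoint cycles whose union contains $e$ must be a double-wheel with $e$ added between its hubs, which is Corollary~\ref{4-connected-main}.
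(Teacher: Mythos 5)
Your proof follows essentially the same route as the paper's: reduce to the trichotomy of Theorem \ref{main}, dispose of type (b) by its explicit $3$-cut, dispose of type (a) via the degree-$3$ vertex created by any spoke that is not doubled, and argue that type (c) graphs cannot be $4$-connected because the small separators of the graphs in Figures \ref{fig-type1}--\ref{fig-type3} survive the passage to a $3$-connected minor using $e$. The only substantive difference is that the paper makes your ``routine but delicate'' bookkeeping for type (c) concrete by invoking the fact that a $k$-vertex cut of a $k$-connected graph either persists as a separator after contracting an edge set $X$ or has all but one of the components of its complement absorbed by the contraction; you should state that fact explicitly rather than leaving the step as a promise.
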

\begin{proof}Let $G$ be a $4$-connected graph with and edge $e$ such that no pair of vertex-disjoint cycles has the property that $e$ is in one of them. So $G$ is a graph of type (a), (b) or (c) like in Theorme \ref{main}. First suppsoe that $G$ is a type (a) graph, obtained from a wheel as described in Theore \ref{main}. If all spokes are doubed, $G$ is a double wheel, then $G$ is obtained from a double wheel adding $e$ linking the hubs. If some spoke is not doubled, then it is incident to a degree $3$-vertex of $G$ and $G$ is not $3$-connected. As no type (b) graph is $4$-connected, to finish the proof it suffices to prove that type (c) graphs are also not $4$-connected. This follows from the fact that if $C$ is a $k$-vertex cut in a $k$-connected graph $G$ and $X\cont E(G)$, then either the vertices of $G/X$ inherited from $C$ separate $G$ or all but one connected components of $G\del C$ are eliminated when contracting $X$. It is a straighforward verification to use this fact to deduce that the contraction of any set of edges avoiding $e$ in the graphs of Figures \ref{fig-type1}, \ref{fig-type2} or \ref{fig-type3} will result in a non $4$-connected graph.
\end{proof}


 One last remark is that the analogous questions for disjoint cycles containing a specified vertex is reduced to Dirac's characterization, as we can see in the next proposition.
\begin{proposition}
Let $G$ be a $3$-connected graph and $v\in V(G)$. If $G$ has no pair of vertex-disjoint cycles using $v$, then $G$ has no pair of vertex-disjoint cycles.
\end{proposition}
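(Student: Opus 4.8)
The plan is to prove the contrapositive: assuming $G$ is $3$-connected and contains a pair of vertex-disjoint cycles $C_1$ and $C_2$, I would produce a pair of vertex-disjoint cycles one of which passes through $v$. If $v\in V(C_1)\cup V(C_2)$ there is nothing to do, so I would assume $v\notin V(C_1)\cup V(C_2)$ and work to reroute one of the cycles through $v$.

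The key tool is the Fan Lemma (the standard consequence of Menger's theorem): since $G$ is $3$-connected and $S:=V(C_1)\cup V(C_2)$ has at least six vertices with $v\notin S$, there are three paths $P_1,P_2,P_3$ from $v$ to $S$, pairwise disjoint apart from $v$, each meeting $S$ only in its end vertex $a_i$. By pigeonhole two of $a_1,a_2,a_3$ lie on the same $C_j$; after relabelling I may assume $a_1,a_2\in V(C_1)$, and $a_1\neq a_2$ because $P_1$ and $P_2$ share only $v$. Letting $Q$ be either of the two $a_1$–$a_2$ arcs of $C_1$, I would set $C_1':=P_1\cup P_2\cup Q$. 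Since $P_1$ and $P_2$ are internally disjoint and their interiors avoid $S$, this $C_1'$ is a cycle, and it contains $v$.

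It then remains to check that $C_1'$ and $C_2$ are vertex-disjoint: every vertex of $C_1'$ is either $v$ (not in $V(C_2)$ by assumption), or an interior vertex of $P_1$ or $P_2$ (not in $S\supseteq V(C_2)$), or a vertex of $V(C_1)$ (not in $V(C_2)$, as $C_1$ and $C_2$ are disjoint). Hence $\{C_1',C_2\}$ is a pair of vertex-disjoint cycles using $v$, completing the contrapositive.

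I do not anticipate a genuine obstacle here; the argument is short. The one point requiring care is invoking the Fan Lemma in exactly the right form — three $v$–$S$ paths meeting $S$ only at their ends — which is precisely where full $3$-connectivity (rather than mere $2$-connectivity) is used, since a $2$-fan could send one path to $C_1$ and the other to $C_2$ and so fail to close up into a cycle avoiding the remaining one.
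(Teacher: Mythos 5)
Your argument is correct and is essentially the paper's own proof: both apply the fan form of Menger's theorem to obtain three $v$--$(V(C_1)\cup V(C_2))$ paths meeting only at $v$, note that two of them must end on the same cycle, and close them up with an arc of that cycle to get a cycle through $v$ disjoint from the other. The only difference is that you spell out the arc $Q$ explicitly where the paper merely asserts that the union contains the required cycle.
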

\begin{proof}
Suppose that $G$ and $v$ contradict the proposition. Let $C$ and $D$ be vertex-disjoint cycles of $G$. So $v\notin V(C)\cup V(D)$. By Menger's Theorem, there are three $(v,V(C)\cup V(D))$-paths in $G$ meeting only in $v$. We may assume that two of them have an endvertex in $C$. Note that the union of $C$ with those two paths contains a cycle containing $v$ and avoiding $D$, a contradiction.
\end{proof}

The structure of this paper is described next. For this description, we let $e=u_1u_2$ be an edge in a $3$-connected graph $G$ that is not contained in the union of two vertex-disjoint cycles. In Section \ref{sec-lemmas}, we give some terminologies, establish some lemmas, prove that the endvertices of $e$ are in a $3$- or $4$-vertex cut of $G$ and characterize family (a) of Theorem \ref{main}. In Section \ref{sec-one-two}, we introduce, in the remaining cases, a classification for the $4$-vertex-cuts containing $\{u_1,u_2\}$ and characterize the connected components for the removal of a certain type of a $4$-vertex-cut containing $\{u_1,u_2\}$. For the remaining case, in Section \ref{sec-rope bridges}, we define constructively a type of graph called rope-bridge and prove that it is the remaining type of connected component for the removal of a $3$- or $4$-vertex-cut incident to $e$. In the end of Section \ref{sec-rope bridges}, we prove Theorem \ref{main-cor}, which implies Theorem \ref{main}. Section \ref{sec-strong} is dedicated to proving Theorems \ref{main-strong} and \ref{main-strong-2con}. In Section \ref{sec-prism} we prove Theorem \ref{thm-prism}.

\section{Lemmas}\label{sec-lemmas}
We first establish some terminology before giving useful lemmas for the proof of the main results. The paths we consider are simple, we think of paths and cycles both as subgraphs and (cyclic) sequences of vertices. The number of vertices and edges in a graph $G$ are denoted by $|G|$ and $\|G\|$, respectivelly. For vertices $v_1,\dots,v_n$ and a subgraph $H$ of a graph $G$, we say that a \defin{cycle of the form} $v_1,\dots,v_k,H,v_{k+1},\dots,v_n,v_1$ is a cycle that begins in $v_1$, follows through $v_1,\dots,v_k$, then, through a path of $H$, and, then, returns to $v_1$ through $v_{k+1},\dots,v_n$.  We  simplify the notations $X\cup\{x\}$ and $X-\{x\}$ by $X\cup x$ and $X-x$ respectively.

Let $e$ be a fixed edge of a graph $G$. We say that $G$ is an \defin{$e$-Dirac} graph if $G$ contains no vertex-disjoint cycles using $e$, in this case $e$ is a \defin{Dirac} edge of $G$. If $G$ contains no edge-disjoint cycles using $e$, then $G$ is said to be \defin{strongly $e$-Dirac}. An elementary observation about the class of $e$-Dirac graphs is that it is closed under minors that use $e$. That is, if $e$ is a Dirac edge of $G$ and $f\in E(G)-e$, then $e$ is also a Dirac edge of $G\del f$ and $G/f$. But the class of strongly $e$-Dirac graphs is closed under deletions but not under contractions of other edges than $e$. For example, when $G$ is the \defin{prism}, a graph on six vertices obtained by adding a perfect matching between the vertices of two disjoint triangles, and $e$ and $f$ are distinct edges in this perfect matching, then $G$ is strongly $e$-Dirac, but $G/f$ is not. 

The proof of the next lemma is straightforward.

\begin{lemma}\label{complete}
Let $e=u_1u_2$ be an edge in a $3$-connected graph $G$ and suppose that $\{u_1,u_2\}$ is contained in no vertex-cut of $G$. Then $G\del\{u_1,u_2\}$ is complete. Moreover, if $G$ is $e$-Dirac, then $|G|\le 5$.
\end{lemma}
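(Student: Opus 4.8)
The plan is to prove the two assertions separately; each argument is short.

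\emph{First assertion.} I would argue by contraposition. If $G\del\{u_1,u_2\}$ is not complete, pick distinct non-adjacent vertices $x,y$ of $G\del\{u_1,u_2\}$ and put $S:=V(G)\del\{x,y\}$. Deleting $S$ from $G$ leaves exactly the two non-adjacent vertices $x$ and $y$, so $S$ is a vertex-cut of $G$; moreover $u_1,u_2\in S$ because $u_1,u_2\notin\{x,y\}$. This contradicts the hypothesis that $\{u_1,u_2\}$ lies in no vertex-cut of $G$, so $G\del\{u_1,u_2\}$ must be complete.

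\emph{Second assertion.} Suppose moreover that $G$ is $e$-Dirac and, for contradiction, that $|G|\ge 6$. Set $H:=G\del\{u_1,u_2\}$; by the first part $H$ is a complete graph on $|G|-2\ge 4$ vertices. Since $G$ is $3$-connected and has no loops, $N(u_1)\cap V(H)=N(u_1)\del u_2$ and $N(u_2)\cap V(H)=N(u_2)\del u_1$ each have at least two elements. The idea is to build a cycle $C$ through $e$ that meets $V(H)$ in as few vertices as possible, and then, using completeness of $H$, a triangle $D$ on three vertices of $V(H)\del V(C)$; since $C$ and $D$ are vertex-disjoint and $C\cup D$ contains $e$, this contradicts $G$ being $e$-Dirac. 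If $u_1$ and $u_2$ have a common neighbour $z\in V(H)$, take $C$ to be the triangle $u_1,u_2,z$ and choose $D$ among the $|H|-1\ge 3$ vertices of $H-z$. Otherwise pick $a\in N(u_1)\cap V(H)$ and $c\in N(u_2)\cap V(H)$ with $a\ne c$, and let $C=u_1,u_2,c,a,u_1$; its edges $u_2c$, $ca$, $au_1$ all exist (the edge $ca$ because $H$ is complete), and when $|H|\ge 5$ the set $V(H)\del\{a,c\}$ still has at least three vertices and supplies the triangle $D$.

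The only situation this does not immediately settle is $|G|=6$, i.e.\ $|H|=4$, with $e$ lying in no triangle: then $N(u_1)\cap V(H)$ and $N(u_2)\cap V(H)$ are forced to be complementary two-element subsets of $V(H)$, so the construction above leaves only two free vertices of $H$. I expect this single small configuration to be the one point of the proof that is not purely formal and has to be examined directly; apart from that check, the argument uses nothing beyond the $3$-connectivity of $G$ and the completeness of $H$ established in the first part.
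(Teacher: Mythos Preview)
Your argument for the first assertion is correct.

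For the second assertion, the case you leave open is not a mere formality to be checked: it is in fact a counterexample to the lemma as stated. Take $V(G)=\{u_1,u_2,a,b,c,d\}$ with $G\del\{u_1,u_2\}\cong K_4$ on $\{a,b,c,d\}$, $N_G(u_1)=\{u_2,a,b\}$ and $N_G(u_2)=\{u_1,c,d\}$. One checks directly that $G$ is $3$-connected. Since $G\del\{u_1,u_2\}$ is complete, removing any superset of $\{u_1,u_2\}$ leaves a complete (hence connected or trivial) graph, so $\{u_1,u_2\}$ lies in no vertex-cut. And since $u_1$ and $u_2$ share no common neighbour, every cycle through $e$ has at least four vertices, leaving at most two vertices outside it; hence no vertex-disjoint second cycle exists and $G$ is $e$-Dirac with $|G|=6$. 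So the ``direct examination'' you anticipate cannot succeed.

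The paper declares the proof straightforward and omits it, so there is no argument to compare against directly. The second assertion does become true if ``$e$-Dirac'' is replaced by ``strongly $e$-Dirac'', which is the hypothesis in force at the one place the paper explicitly invokes this lemma (the proof of Theorem~\ref{main-strong}): in the configuration above, the $4$-cycle $u_1,u_2,c,a,u_1$ and the triangle $b,c,d,b$ are edge-disjoint. Under that reading your Case~2 with $|H|=4$ is easily completed by taking a triangle on the two unused vertices of $H$ together with one of the used ones.
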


Throughout the paper, until the proof of Theorem \ref{main}, we let $G$ be a $3$-connected graph with a Dirac edge $e=u_1u_2$ and a minimum sized set $S$ such that $S^+:=S\cup\{u_1,u_2\}$ is a vertex-cut of $G$. We let $G_1,\dots,G_\kappa$ be the connected components of $G\del S^+$. For a subgraph $H$ of $G$ and $X\cont V(G)-V(H)$ we denote by $H+X$ the graph obtained from $H$ by adding the vertices of $X$ and all edges of $G$ linking the vertices of $X$ with vertices of $V(H)\cup X$. 

The minimality of $S$ implies the following elementary result.

\begin{lemma}\label{edges from S}
For each $w\in S$ and each $1 \leq i \leq \kappa$, there is an edge from $w$ to a vertex of $G_i$. 
\end{lemma}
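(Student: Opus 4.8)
The plan is to argue by contradiction, exploiting the minimality of $S$. Suppose that some $w \in S$ has no neighbour in some component $G_i$ of $G \del S^+$. The idea is that $w$ then plays no essential role in separating $G_i$ from the rest of the graph, so we should be able to drop it from the cut and still separate the vertices of $G_i$ from something, contradicting the minimality of $|S|$. More precisely, set $S' := S - w$ and $T := (S')^+ = S'\cup\{u_1,u_2\} = S^+ - w$. I claim $T$ is still a vertex-cut of $G$: since $w$ has no neighbour in $G_i$, every path from a vertex of $G_i$ to $w$ must pass through $S^+ - w = T$, and in particular $V(G_i)$ and $\{w\}$ lie in different components of $G \del T$ (both are non-empty, as $\kappa \geq 1$ and $w$ exists, and $w \notin V(G_i)$). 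Hence $T$ is a vertex-cut whose $u$-part $S'$ is strictly smaller than $S$, contradicting the minimality of $S$.

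The one point requiring a little care is to make sure that removing $w$ does not trivially reconnect things in a way that invalidates the argument — i.e., that $G \del T$ really is disconnected and not all of $G \del S^+$'s structure collapses. This is where $\kappa \geq 1$ is used together with the hypothesis $w \not\in V(G_i)$: the set $V(G_i)$ is non-empty and disjoint from $T$, and no vertex of $G_i$ is adjacent to $w$, so in $G\del T$ no walk leaves $G_i \cup (\text{other components not containing } w)$ to reach $w$. Thus $w$ is separated from $G_i$ in $G \del T$, so $G\del T$ is disconnected. Since $|S'| = |S| - 1 < |S|$ and $S' \cup \{u_1,u_2\}$ is a vertex-cut, this contradicts the choice of $S$ as a minimum-sized such set, completing the proof.

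I do not expect any real obstacle here; the only thing to be slightly careful about is the degenerate possibility $S = \{w\}$, in which case $S' = \emptyset$ and $T = \{u_1,u_2\}$, and the contradiction is simply that $\{u_1,u_2\}$ is then itself a vertex-cut with $S$-part of size $0$, still contradicting minimality (and this is consistent with the standing setup, where $S$ is chosen of minimum size among sets with $S^+$ a vertex-cut). So the argument goes through uniformly.
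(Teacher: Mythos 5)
Your argument is correct and is exactly the minimality argument the paper intends: the paper omits the proof, remarking only that the lemma follows from the minimality of $S$, and your verification that $S^+ - w$ still separates $V(G_i)$ from $w$ (since no edge leaves $G_i$ except into $S^+$, and none goes to $w$) is precisely that elementary argument. No gaps.
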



\begin{lemma}\label{neighbor}
Each vertex $x\in V(G)-\{u_1,u_2\}$ has at least $|S|$ neighbors in $G\backslash \{u_1,u_2\}$.
\end{lemma}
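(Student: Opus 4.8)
The plan is to prove this by contradiction, using \emph{only} the minimality of $S$ (the hypothesis that $e$ is a Dirac edge will not be needed). Suppose some vertex $x\in V(G)-\{u_1,u_2\}$ has fewer than $|S|$ neighbours in $G\del\{u_1,u_2\}$, and set $N:=N_G(x)\del\{u_1,u_2\}$, so that $|N|\le|S|-1$, and $N^+:=N\u\{u_1,u_2\}$. The key point is that $N^+\supseteq N_G(x)$, so $x$ is an isolated vertex of $G\del N^+$; provided $G\del N^+$ has at least one vertex other than $x$, this forces $N^+$ to be a vertex-cut of $G$ that contains $\{u_1,u_2\}$. Since $N$ is disjoint from $\{u_1,u_2\}$ and $|N|<|S|$, this contradicts the choice of $S$ as a minimum-sized set with $S\u\{u_1,u_2\}$ a vertex-cut of $G$.

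To confirm that $G\del N^+$ genuinely has a vertex besides $x$, I would record the crude bound $|G|\ge|S|+4$: since $S^+$ is a vertex-cut of the connected graph $G$, the graph $G\del S^+$ has at least two nonempty components (namely $\kappa\ge2$ of them), so $|G|=|S^+|+\sum_{i=1}^{\kappa}|G_i|\ge(|S|+2)+2$. On the other hand, because $N$ avoids $\{u_1,u_2\}$ and $x\notin N^+$, we have $|N^+\u x|=|N|+3\le|S|+2<|G|$, so there is indeed some $y\in V(G)-(N^+\u x)$. As $x$ has no neighbour in $G\del N^+$ while $y$ does belong to $G\del N^+$, the vertices $x$ and $y$ lie in distinct components of $G\del N^+$; hence $N^+$ is a vertex-cut of $G$, which completes the contradiction.

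I do not expect a real obstacle here: the argument is a one-step application of the minimality of $S$. The only subtlety is the degenerate case $V(G)=N^+\u\{x\}$, in which ``isolating $x$'' would not actually disconnect the graph; the vertex-count inequality $|G|\ge|S|+4$ is exactly what rules this out, and it is all that the components $G_1,\dots,G_\kappa$ contribute to the proof.
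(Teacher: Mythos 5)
Your proof is correct and is essentially the same as the paper's: both isolate $x$ by removing $N_G(x)\cup\{u_1,u_2\}$, use the bound $|G|\ge|S|+4$ to guarantee a vertex outside this set together with $x$, and then contradict the minimality of $S$. Your write-up just makes the justification of $|G|\ge|S|+4$ slightly more explicit than the paper does.
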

\begin{proof}Suppose that $x$ has $n\le|S|-1$ neighbors in $G\backslash \{u_1,u_2\}$. Thus $X:=N_G(x)\cup\{x,u_1,u_2\}$ has $n+3\le |S|+2$ elements. As $S^+$ is a vertex-cut, then $|G|\ge |S|+4$. Hence $V(G)-X$ has a vertex $v$. But $N_G(x)\cont X-x$. So, $X-x$ separates $x$ from $v$ and, therefore, $X-x$ is a vertex-cut containing $\{u_1,u_2\}$. But $|X-x|=n+2\le|S|+1<|S^+|$, a contradiction to the minimality of $S$.
\end{proof}

The next lemma has an elementary proof, which we ommit.

\begin{lemma}\label{starbucks}
Let $D$ be a cycle of $G$ such that $e\in E(D)$ and $H$ be a subgraph of $G$ such that $H\del V(D)$ is connected. Suppose that $f$ is an edge of $G$ incident to $x\in V(H)-V(D)$ but such that $f\notin E(H)$. Then, each $(x,V(H)-x)$-path $\gamma$ of $G$ beginning with $f$ intersects $D$. In particular, $\gamma$ has an endvertex in $V(D)$ if $V(D)\cont V(H)$.
\end{lemma}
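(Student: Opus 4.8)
The plan is to argue by contradiction, using that $e$ is a Dirac edge of $G$. Suppose $\gamma$ is an $(x,V(H)-x)$-path of $G$ that begins with the edge $f$ but has $V(\gamma)\cap V(D)=\emp$, and let $y$ be the endvertex of $\gamma$ other than $x$. Since $\gamma$ avoids $V(D)$ we get $y\notin V(D)$, and of course $y\in V(H)$; together with the hypothesis $x\in V(H)-V(D)$ this places both $x$ and $y$ in $V(H\del V(D))$. As $H\del V(D)$ is connected, it contains an $(x,y)$-path $\delta$.

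The next step is to extract from $\gamma\u\delta$ a cycle disjoint from $D$. Traversing $\gamma$ from $x$, I would let $z$ be the first vertex other than $x$ that also lies on $\delta$; such a $z$ exists because $y$ qualifies. Let $\gamma'$ be the segment of $\gamma$ from $x$ to $z$ and $\delta'$ the segment of $\delta$ from $x$ to $z$; by the choice of $z$ these are internally disjoint $(x,z)$-paths, and they are distinct because $\gamma'$ begins with $f\notin E(H)$ while $\delta'\cont H$. Hence $C:=\gamma'\u\delta'$ is a genuine cycle (with at least three vertices, since $G$ is simple), and $V(C)\cont V(\gamma)\u V(\delta)$ is disjoint from $V(D)$.

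Then $C$ and $D$ are vertex-disjoint cycles of $G$ with $e\in E(D)\cont E(C)\u E(D)$, contradicting the assumption that $e$ is a Dirac edge of $G$; this proves the first assertion. For the closing sentence, suppose in addition that $V(D)\cont V(H)$. Because $\gamma$ is an $(x,V(H)-x)$-path, all its internal vertices lie outside $V(H)$, hence outside $V(D)$; since $\gamma$ nonetheless meets $D$ by the first assertion, the common vertex is an endvertex of $\gamma$, and as $x\notin V(D)$ it must be $y$. I do not anticipate any real obstacle here: the single point needing care is confirming that $\gamma'$ and $\delta'$ do not coincide, so that their union is a cycle — and this is precisely where the hypothesis $f\notin E(H)$ is used.
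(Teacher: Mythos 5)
Your proof is correct: extracting the cycle $C$ from $\gamma'\cup\delta'$ (with the observation that $f\notin E(H)$ forces the two $(x,z)$-segments to be distinct, so their union really is a cycle) and playing it against $D$ to contradict the Dirac property of $e$ is exactly the elementary argument the paper has in mind — the authors explicitly omit the proof as straightforward. The handling of the "in particular" clause via the convention that an $(x,V(H)-x)$-path has no internal vertices in $V(H)$ is also the intended reading.
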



\begin{lemma}\label{menger}
Let $H$ be a $3$-connected graph, $uv$ be an edge of $H$ and $X$ be a subset of $V(H)$ avoiding $\{u,v\}$ with at least $3$ elements. Then, for some $x\in\{u,v\}$, there are $(X,\{u,v\})$-paths $\alpha$, $\beta$, and $\gamma$ of $G$ such that $V(\alpha)\cap V(\gamma)=V(\beta)\cap V(\gamma)=\emptyset$ and $V(\alpha)\cap V(\beta)=\{x\}$.
\end{lemma}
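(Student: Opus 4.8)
The plan is to manufacture the three paths from a single $3$-fan based at $u$, which has the bonus that one can always take $x=u$. Since $H$ is $3$-connected and $|X|\ge 3$ with $u\notin X$, the fan form of Menger's theorem (equivalently: add a new vertex joined to every vertex of $X$ and apply Menger between it and $u$) provides three paths $E_1,E_2,E_3$ from $u$ to distinct vertices $a_1,a_2,a_3$ of $X$, pairwise meeting only in $u$ and each meeting $X$ only in its endpoint $a_i$. Since the $E_i$ meet pairwise only in $u\neq v$, the vertex $v$ lies on at most one of them.

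First I would dispose of the case $v\in V(E_i)$ for some $i$ (relabel so $i=3$): then $v$ is internal to $E_3$, so $E_3$ is the concatenation of a path from $u$ to $v$ and a path from $v$ to $a_3$; I take $\gamma$ to be the latter and $\alpha:=E_1$, $\beta:=E_2$. Because $u$ precedes $v$ on $E_3$, $\gamma$ avoids $u$; because $v$ lies on no other leg, $\alpha$ and $\beta$ avoid $v$; and since $E_1,E_2,E_3$ meet pairwise only in $u\notin V(\gamma)$, one gets $V(\alpha)\cap V(\beta)=\{u\}$ and $V(\alpha)\cap V(\gamma)=V(\beta)\cap V(\gamma)=\emptyset$. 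In the remaining case $v$ is on none of the $E_i$; set $T=V(E_1)\cup V(E_2)\cup V(E_3)$. As $H$ is $3$-connected, $H-u$ is connected, so it contains a shortest path $R$ from $v$ to $T-u$; let $r$ be the endpoint of $R$ in $T-u$. By minimality $r$ is the only vertex of $R$ in $T-u$, and it lies in exactly one of the pairwise disjoint sets $V(E_j)-u$; relabel so $j=3$, so $r$ is $a_3$ or an internal vertex of $E_3$. I then take $\gamma$ to be $R$ followed, if $r\neq a_3$, by the part of $E_3$ from $r$ to $a_3$, and $\alpha:=E_1$, $\beta:=E_2$. Now $\gamma$ runs from $v$ to $a_3$ and avoids $u$ (both pieces do), and $\gamma$ is disjoint from $V(E_1)\cup V(E_2)$: the piece $R$ meets $T-u$ only in $r\notin V(E_1)\cup V(E_2)$ and has other endpoint $v\notin V(E_1)\cup V(E_2)$, while the piece inside $E_3$ meets $E_1\cup E_2$ only in $u$, which it omits.

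In both cases I finish by replacing $\gamma$ with its terminal segment from $v$ up to the first vertex of $X$ it meets, which turns $\gamma$ into a genuine $(X,\{u,v\})$-path whose $X$-endpoint differs from $a_1,a_2$ (it avoids $V(E_1)\cup V(E_2)\supseteq\{a_1,a_2\}$), while $E_1,E_2$ are already $(X,\{u,v\})$-paths by the fan property; so $x=u$ works, and in fact $\alpha,\beta$ always end at $u$. The one point that needs care is the second case: one must route $R$ to the entire fan $T$ and let the leg first reached by $R$ decide which two legs become $\alpha,\beta$. Committing in advance — say, deleting $V(E_2)\cup V(E_3)$ and trying to reach $X$ from $v$ in what remains — can fail, because that deleted set may itself separate $v$ from $X$; routing to $T$ sidesteps this. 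Everything else is a routine verification that the produced paths are internally disjoint from $X$ and from $\{u,v\}$ except where required.
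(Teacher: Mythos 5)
Your proof is correct, but it takes a genuinely different route from the paper's. The paper starts from the other side: it takes three Menger paths from $X$ into $\{u,v\}$ that pairwise meet only inside $\{u,v\}$, observes that the only problematic case is when all three converge on the same vertex (say $v$), deduces $d_H(v)\ge 4$, splits $v$ into two adjacent vertices of degree at least three (invoking the standard fact that such a split preserves $3$-connectivity), and applies Menger once more to the $3$-set $\{u,v_1,v_2\}$; the vertex $x$ it produces is whichever of $u,v$ the original paths converge on. You instead grow a $3$-fan out of $u$ into $X$ and then attach $v$ to the fan by a shortest path in $H-u$, rerouting the leg it hits to become $\gamma$ and truncating at the first $X$-vertex; the verification that the connecting path meets the fan only at its endpoint, and hence that $\gamma$ misses the other two legs, is exactly right, and your closing remark about why one must route to the whole fan rather than to a pre-chosen leg identifies the one place a naive version would break. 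What each approach buys: the paper's argument is shorter on the page but leans on the vertex-splitting lemma; yours is more self-contained (only the fan lemma and connectedness of $H-u$) and yields the mild strengthening that $x$ can always be taken to be $u$ -- and hence, by symmetry, either prescribed endpoint -- rather than merely ``some $x\in\{u,v\}$''.
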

\begin{proof}
By Menger's Theorem there are $(X,\{u,v\})$-paths $\alpha_1$, $\alpha_2$ and $\alpha_3$ such that $V(\alpha_i)\cap V(\alpha_j)\cont\{u,v\}$ for $1\le i<j\le3$. We may assume that $V(\alpha_i)\cap\{u,v\}=\{v\}$ for $i=1,2,3$. This implies that $d_H(v)\ge 4$. Consider a graph $K$ with minimum degree at least $3$ obtained from $H$ by splitting $v$ into vertices $v_1$ and $v_2$. It is well known and easy to check that $K$ is $3$-connected. By Menger's Theorem, $K$ has three vertex-disjoint $(X,\{u,v_1,v_2\})$-paths. The desired paths are the corresponding paths in $H$. 
\end{proof}

\begin{lemma}\label{big-cycle}
Let $u$ be a vertex in a vertex-cut $X$ of a graph $H$. If $u$ has $3$ neighbors in some connected component $K$ of $H\del X$, then $H[V(K)\cup u]$ has a cycle containing $u$ with more than $3$ vertices.
\end{lemma}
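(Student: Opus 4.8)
The plan is to build the desired cycle by hand using the three neighbors of $u$ in $K$ together with connectivity of the component $K$. Let $x_1,x_2,x_3$ be three distinct neighbors of $u$ lying in $K$. Since $K$ is connected, it contains a tree $T$ spanning $\{x_1,x_2,x_3\}$ (a minimal connected subgraph joining the three vertices); such a $T$ is either a path through all three $x_i$'s or a subdivided star with the three $x_i$'s as leaves. In either case, $T$ contains a path $P$ in $K$ between two of the three vertices, say $x_1$ and $x_2$, and this path $P$ together with the two edges $ux_1$ and $ux_2$ forms a cycle $C$ through $u$. The only thing that could go wrong is that $C$ has exactly three vertices, namely when $P$ is the single edge $x_1x_2$; so the work is in showing we can always avoid that.

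The key step is therefore to argue that, after a suitable choice, the path $P$ inside $K$ joining two of the $x_i$ can be taken to have an internal vertex, so that $C$ has at least four vertices. First I would dispose of the easy case: if for some pair, say $x_1x_2$, the vertices $x_1$ and $x_2$ are \emph{not} adjacent in $H$, then any $(x_1,x_2)$-path in $K$ (which exists since $K$ is connected) has an internal vertex, and we are done. So we may assume $x_1,x_2,x_3$ are pairwise adjacent, i.e.\ they induce a triangle in $K$. Now consider the path $P$ in $K$ from $x_1$ to $x_2$ obtained from the spanning tree $T$ chosen above: if $T$ is a path visiting $x_3$ between $x_1$ and $x_2$, or a star, then $P$ passes through $x_3$ (or through the center of the star), hence has an internal vertex, and $C=uP x_1 u$ has at least four vertices. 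The remaining possibility is that the minimal tree $T$ joining the three vertices is just a single edge together with a third vertex pendant --- but since $x_1x_2x_3$ is a triangle, $T$ can be taken to be the path $x_1,x_2,x_3$ if we are allowed, and then the path from $x_1$ to $x_3$ in $T$ goes through $x_2$, giving the four-vertex cycle $u,x_1,x_2,x_3,u$ (using edges $ux_1$, $x_1x_2$, $x_2x_3$, $x_3u$).

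The main obstacle, and the only subtlety, is the degenerate configuration where $K$ restricted to $N(u)\cap V(K)$ is small and tightly connected; the argument above handles it by explicitly exhibiting the $4$-cycle $u,x_1,x_2,x_3,u$ whenever $\{x_1,x_2,x_3\}$ spans a triangle, and otherwise using a nonadjacent pair and connectivity of $K$ to get an internal vertex on the connecting path. In all cases $H[V(K)\cup u]$ contains a cycle through $u$ on at least four vertices, which is what we wanted. I would write this up in two short paragraphs: one reducing to the triangle case via connectivity of $K$, and one producing the explicit $4$-cycle in the triangle case.
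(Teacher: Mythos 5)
Your proof is correct and takes essentially the same elementary route as the paper: either two of the three neighbors are joined in $K$ by a path with an internal vertex (giving a cycle through $u$ on at least four vertices), or the relevant neighbors are pairwise adjacent and $u,x_1,x_2,x_3,u$ is an explicit $4$-cycle. The spanning-tree digression in your middle paragraph is unnecessary, but the final case analysis is sound and matches the paper's two-line argument.
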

\begin{proof}
Let $x,y,z$ be distinct neighbors of $u$ in $K$. As $K$ is connected, there is an $(x,y)$-path and an $(y,z)$-path in $K$. If one of these paths has more than one edge, we are done. Otherwise, both have one edge and $u,x,y,z,u$ is the cycle we seek.
\end{proof}


\begin{lemma}\label{bigone}
If $|S|\ge 3$, $d_G(u_1),d_G(u_2)\ge4$, and, for some $\{i,j\}=\{1,2\}$ and $k\in\{1,\dots,\kappa\}$, there is a pair of edges from $u_i$ to $G_k$, then:
\begin{enumerate}

\item [(a)] $\kappa=2$ and
\item [(b)] $N_G(u_j)\cont V(G_{3-k})$.
\end{enumerate}
\end{lemma}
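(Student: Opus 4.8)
\noindent\textbf{Proof plan for Lemma~\ref{bigone}.}
The plan is to argue by contradiction, in each case exhibiting two vertex-disjoint cycles whose union contains $e=u_1u_2$. Fix notation: say $\{i,j\}=\{1,2\}$ and $k$ as in the hypothesis, and let $f_1=u_ia$, $f_2=u_ib$ be two distinct edges from $u_i$ into $G_k$. Since $G$ is $3$-connected and $|S|\ge 3$, the set $S^+=S\cup\{u_1,u_2\}$ has at least five vertices, so $|G|\ge 6$. First I would record two basic building blocks. The first is a cycle $D$ through $e$ of a prescribed shape: since $u_i$ has two edges into the connected subgraph $G_k$, there is an $(a,b)$-path in $G_k$, and together with $f_1,f_2$ this yields a cycle $D_0$ through $u_i$ lying in $G[V(G_k)\cup u_i]$; I can then hope to route from $u_i$ to $u_j$ avoiding $V(G_k)$ via $S\cup(V(G)\setminus (V(G_k)\cup S^+))$, using $3$-connectivity, to close up a cycle $D$ through $e$ that otherwise stays outside $V(G_k)$. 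The second block is a cycle inside $G_k$ avoiding $u_i$, or more generally avoiding the few vertices of $D$ inside $G_k$; Lemma~\ref{big-cycle} and Lemma~\ref{neighbor} are the tools here, since every vertex of $G_k$ has at least $|S|\ge 3$ neighbours in $G\setminus\{u_1,u_2\}$, which forces enough cycles inside the blocks.

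For part~(a), suppose $\kappa\ge 3$. The idea is that the ``extra'' components $G_2,\dots$ give us a cheap cycle through $e$ that avoids $G_k$ entirely. Concretely, pick a component $G_\ell$ with $\ell\neq k$; by Lemma~\ref{edges from S} and $3$-connectivity there are enough internally disjoint paths between $\{u_1,u_2\}$ and $S\cup V(G_\ell)$ to build a cycle $D$ containing $e$ with $V(D)\cap V(G_k)=\emptyset$ (this is where I would invoke Lemma~\ref{menger} on the graph $G[V(G_\ell)\cup S^+]$, or a direct Menger argument, to get the three paths needed to form such a cycle through both $u_1$ and $u_2$). Then $G[V(G_k)\cup u_i]$ contains the cycle $D_0$ described above, which is disjoint from $D$ since $D$ avoids $V(G_k)$ and $D_0\setminus u_i\subseteq V(G_k)$ while $u_i\notin V(D)$ — wait, $u_i\in V(D)$, so instead I use that $u_i$ has two neighbours $a,b$ in $G_k$ and, by Lemma~\ref{big-cycle}, if $u_i$ has three neighbours in $G_k$ we even get a cycle avoiding $u_i$; if $u_i$ has exactly the two neighbours $a,b$ in $G_k$, then $G_k$ being connected and, by Lemma~\ref{neighbor}, every vertex of $G_k$ having degree $\ge 3$ in $G\setminus\{u_1,u_2\}$, one shows $G_k$ itself (a connected graph with $\ge 2$ vertices, each of degree $\ge 1$ inside and total degree $\ge 3$) contains a cycle $C\subseteq G_k$; this $C$ is vertex-disjoint from $D$, and $e\in E(D)$, contradiction. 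Hence $\kappa=2$.

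For part~(b), with $\kappa=2$ now known, suppose $u_j$ has a neighbour outside $V(G_{3-k})$, i.e. a neighbour in $V(G_k)\cup S\cup\{u_i\}$; the neighbour cannot be $u_i$ alone to the point of ruining things since $e$ is already there, so the interesting cases are a neighbour in $S$ or in $V(G_k)$. In either case the plan is to build a cycle $D$ through $e$ using that neighbour together with a return path through $G_{3-k}$ (which is connected and, again by Lemmas~\ref{neighbor} and \ref{big-cycle}, richly connected to $S^+$), arranging that $D$ meets $V(G_k)$ in at most one vertex — ideally in none. Simultaneously, inside $G[V(G_k)\cup u_i]$ we still have the cycle $D_0$ from $f_1,f_2$ and an $(a,b)$-path of $G_k$; if $D$ avoids $V(G_k)$ we are done immediately, and if $D$ must use one vertex $x\in V(G_k)$ then I would instead use that $u_i$ has $\ge 3$ neighbours available (its degree is $\ge 4$ and one edge goes to $u_j$, possibly one more elsewhere, but two go into $G_k$, and Lemma~\ref{big-cycle} with the $\ge 3$-neighbour hypothesis, or Lemma~\ref{neighbor} applied inside $G_k$) to find a cycle in $G[V(G_k)\cup u_i]$ avoiding $x$; that cycle plus $D$ are the two disjoint cycles covering $e$.

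The main obstacle I anticipate is the bookkeeping in part~(b) when the exceptional neighbour of $u_j$ lies in $S$ rather than in $V(G_k)$: then the cycle $D$ I build through $e$ naturally wants to pass through $S$, which is shared between $G_k$ and $G_{3-k}$, and I must be careful that the portion of $D$ inside $G_k$ can be made empty (or a single vertex that the inner cycle can dodge), using the minimality of $S$ and Lemma~\ref{edges from S} to reroute. Making the three-paths argument (Lemma~\ref{menger}) yield a cycle through \emph{both} $u_1$ and $u_2$, rather than just a path structure, while keeping it clear of $V(G_k)$, is the technical heart; everything else is assembling disjoint cycles from the blocks $G[V(G_k)\cup u_i]$ and $G[V(G_{3-k})\cup S^+]$ and quoting Lemmas~\ref{neighbor}, \ref{big-cycle}, and \ref{starbucks}.
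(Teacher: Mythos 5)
There is a genuine gap, and it sits at the heart of both parts of your plan: you repeatedly assume that $G_k$ (or $G[V(G_k)\cup u_i]$ with one prescribed vertex removed) contains a cycle, and this is false. Lemma~\ref{neighbor} only guarantees that each vertex of $G_k$ has at least $|S|\ge 3$ neighbours in $G\setminus\{u_1,u_2\}$, and those neighbours may all lie in $S$; so $G_k$ can perfectly well be a tree (even a single edge $ab$ with both ends joined to $u_i$ and to vertices of $S$), in which case every cycle of $G[V(G_k)\cup u_i]$ passes through $u_i$ and cannot be disjoint from a cycle $D$ containing $e=u_iu_j$. Your proposed repair via Lemma~\ref{big-cycle} misreads that lemma: it produces a cycle \emph{containing} $u_i$ with more than three vertices, not a cycle avoiding $u_i$ (and indeed no such cycle need exist, e.g.\ when $G_k$ is a star). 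A second, independent problem in part (a) is that the cycle $D$ through $e$ with $V(D)\cap V(G_k)=\emptyset$ need not exist either: nothing prevents all neighbours of $u_i$ other than $u_j$ from lying in $G_k$, so every cycle through $e$ may be forced to enter $G_k$ on $u_i$'s side.

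The paper's proof avoids both traps by never asking $G_k$ alone to supply a cycle. It fixes a longest cycle $C$ in $G_k+u_i$ through $u_i$, attaches three $(S,V(C)-u_i)$-paths $\alpha_1,\alpha_2,\alpha_3$ inside $G_k$ (via Menger and Lemma~\ref{menger}), and then builds \emph{both} disjoint cycles out of complementary pieces of the frame $C\cup\alpha_1\cup\alpha_2\cup\alpha_3$ together with $S$ and the other components: for instance, when $\kappa\ge3$ and $u_j$ has no edge to $G_k$, the cycle through $e$ runs $u_i,u_j,G_\ell,v_3,\alpha_3,a_3,\beta_3,u_i$ while the second cycle runs along an arc of $C$ between $a_1$ and $a_2$, out through $\alpha_2$ and $\alpha_1$ to $S$, and closes through a third component. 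Part (b) is then a substantially longer argument with nested extremal choices (maximal $|C|$, minimal $\|\beta_1\|$, minimal $|V(\delta)\cap S|$) that your sketch does not approach. To salvage your plan you would need to replace every appeal to ``a cycle inside $G_k$'' by a cycle threaded through $G_k$, $S$, and the remaining components simultaneously, which is essentially the paper's construction.
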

\begin{proof} We may assume that $k=i=1$. Note that there is a cycle $C$ in $G_1+u_1$ meeting $u_1$. Pick $C$ maximizing $|C|$. By the minimality of $S$, $G\del \{u_1,u_2\}$ is $3$-connected. Using Menger's Theorem if $|V(C)-u_1|\ge 3$ and Lemma \ref{menger} if $|V(C)-u_1|=2$, we conclude that there are $(S,V(C)-u_1)$-paths $\alpha_1$, $\alpha_2$ and $\alpha_3$ in $G\del\{u_1,u_2\}$ such that $V(\alpha_i)\cap V(\alpha_3)=\emptyset$ for $i=1,2$ and $V(\alpha_1)\cap V(\alpha_2)$ is empty if $|V(C)-u_1|\ge3$ and a singleton set contained in $V(C)$ otherwise. Note that the inner vertices of these paths are in $G_1$. For $i=1,2,3$, we let $v_i$ and $a_i$ be the endvertices of $\alpha_i$ in $S$ and $V(C)-u_1$ respectively. We write $C$ as a closed path of the form $u_1,\beta_1,a_1,\beta_{21},a_2,\beta_{23},a_3,\beta_3,u_1$ with $\|\beta_{1}\|,\|\beta_{23}\|,\|\beta_{3}\|\ge 1$ and $\|\beta_{21}\|\ge 1$ if and only if $|C|\ge 4$. In particular, we choose $\alpha_1$, $\alpha_2$ and $\alpha_3$ minimizing $\|\beta_1\|$.

First we prove (a). Suppose for a contradiction that $\kappa\ge 3$. If there is an edge from $u_2$ to $G_1$, then, for distinct $u,v\in S$, $G_1+\{u_1,u_2\}$ has a cycle containing $e$ avoiding a cycle of $G_2\cup G_3+\{u,v\}$, what is a contradiction. Thus, there is no edge from $u_2$ to $G_1$. As $d_G(u_2)\ge 4$, there is an edge linking $u_2$ and $G_2+(S-\{v_2,v_1\})$. Now, we have a cycle of the form
\begin{linenomath*}
\[u_1,u_2,G_2+(S-\{v_2,v_1\}),v_3,\alpha_3,a_3,\beta_3,u_1\]
\end{linenomath*}
containing $e$ and avoiding a cycle of the form \begin{linenomath*}\[a_1,\beta_{21},a_2,\alpha_2,v_2,G_3,v_1,\alpha_1,a_1,\]
\end{linenomath*}
a contradiction. So, $\kappa=2$ and (a) holds.

Now suppose that (b) fails. Choose a counter-example maximizing $|N_G(u_i)\cap V(G_k)|$; this choice is subject to the minimality of $\|\beta_1\|$, which is subject to the maximality of $|C|$. Note that we may still assume that $k=i=1$.

Define $X:=C\cup\alpha_1\cup\alpha_2\cup\alpha_3$. First we check:

\begin{rot}\label{bigone-1}
If $\zeta$ is an $(u_2,V(X)-u_1)$-path of $G_1+(S\cup u_2)$, then either
\begin{enumerate}
 \item [(i)] $|C|=3$ and $\zeta$ has an  endvertex in $V(\alpha_1)\cup V(\alpha_2)$ or
\item [(ii)] $|C|\ge 4$ and $\zeta$ has an endvertex in $V(\alpha_2)$.
\end{enumerate}
\end{rot}
\begin{rotproof} Let $u_2$ and $z$ be the endvertices of $\zeta$. First we check that $z\in A:=V(\alpha_1)\cup V(\beta_{21})\cup V(\alpha_{2})$. If not, $z$ is a vertex of $X\del A$, which is a connected graph containing $u_1$. This implies that a cycle of $A\cup G_2$ avoids a cycle of the form $u_1,u_2,\zeta,z,X\del A, u_1$, what is a contradiction. So, $z\in A$.

If $|C|=3$, $\|\beta_{21}\|=0$ and $A=V(\alpha_1)\cup V(\alpha_2)$. So, (i) holds.

Now, assume that $|C|\ge 4$. As $z\in A$, analogously, $z\in B:=V(\alpha_3)\cup V(\beta_{23})\cup V(\alpha_{2})$. Therefore, $z\in A\cap B=V(\alpha_2)$. So, (ii) holds. 
\end{rotproof}

As (b) fails, $u_2$ has neighbors in $G_1\cup S$ and $G_1+(S\cup u_2)$ has an $(u_2,V(X)-u_1)$-path $\delta$, which we may assume that has an endvertex  $d$ of $\alpha_2$ by \ref{bigone-1} ($\alpha_1$ and $\alpha_2$ play similar roles if $|C|=3$). Pick $\delta$ minimizing $|V(\delta)\cap S|$. Note that this choice implies that either $u_2$ has some neighbor in $G_1$ and $V(\delta)\cap S\emptyset$ or $v_2$ has no neighbor in $G_1$ and $V(\delta)\cap S$  has an unique vertex, which is a neighbor of $u_2$ in $S$.

For each $i=1,3$ and each $(u_2,V(X)-u_1)$-path $\zeta$ of $G_1+(S\cup u_2)$ with an endvertex $z$ in $\alpha_2$, we define the following cycle containing $e$:
\begin{linenomath*}
\[C_{\zeta,i}:=u_1,u_2,\zeta,z,\alpha_2,a_2,\beta_{2i},a_i,\beta_i,u_1.\]
\end{linenomath*}

Now we prove:

\begin{rot}\label{bigone-2}
$v_1$ has two neighbors in $V(G_1)$.
\end{rot}
\begin{rotproof}
Suppose the contrary. By the assumption that $\delta$ arrives in $\alpha_2$, $v_1$ avoids $\delta$. If $v_1$ has two neighbors in $V(G_2 + (S-V(\delta))$, then $C_{1,\delta}$ avoids a cycle of $G_2+(S-V(\delta)$, a contradiction. Thus $v_1$ has only one neighbor in $V(G_2 + (S-\int(\delta))$. As $v_1$ has an unique neighbor in $G_1$, it follows by Lemma \ref{neighbor} that  $v_1$ has two neighbors in $G_2+S$, and, therefore, a neighbor $v\in V(\delta)\cap S$. As observed before, $v$ is the unique vertex of $S$ in $\delta$, and, as a consequence, $u_2$ has no neighbor in $G_1$ by the minimality of $|V(\delta)\cap S|$. As $d_g(u_2)\ge 4$, $u_2$ has a neighbor $w\in V(G_2)\cup (S-\{v,v_1\})$. Since $\delta$ arrives at $X$ in $\alpha_2$, it follows that $v_3\notin V(\delta)$. Thus the cycle $v_1,v,\delta,d,\alpha_2,a_2,\beta_{21},a_1,\alpha_1,v_1$ avoids a cycle of the form $u_1,u_2,w,G_2,v_3,\alpha_3,a_3,\beta_3,u_1$, a contradiction.
\end{rotproof}

By \ref{bigone-2} $v_1$ has a neighbor $v'_1\in V(G_1)$ such that $v_1v'_1\notin E(\alpha_1)$. Let $\sigma$ be a $(v_1,V(X)\cup\int(\delta))$-path of $G_1+v_i$ beginning with $v_1v'_1$ and let $s$ be the other endvertex of $\sigma$. 

Let $H_0$ be the subgraph of $G$ obtained from $G_2\cup X$ by adding the dges that connects $X$ to $G_2$. Note that $H_0\del V(C_{\delta,1})$ is connected. By Lemma \ref{starbucks}, $s\in V(C_{\delta,1})$. If $s\in \int(\beta_1)$, then $\sigma$, $\alpha_2$ and $\alpha_3$ contradict the choice of $\alpha_1$, $\alpha_2$ and $\alpha_3$ minimizing $\|\beta_1\|$. So, $s\notin \int(\beta_1)$ and $\sigma$ avoids $\int(\beta_1)$. As $(H_0\del \int(\beta_1))\del V(C_{\delta,3})$ is connected, it follows  from Lemma \ref{starbucks} that $s\in V(C_{\delta,3})$. So,
\begin{linenomath*}
\[s\in V(C_{\delta,1})\cap V(C_{\delta,3}) \cap V(G_1)\cont V(a_2,\alpha_2,d)\cup \int(\delta).\]
\end{linenomath*}
Next we define a path $\tau$ as $\tau:=\sigma$ if $s\in V(\alpha_2)$ and $\tau:=\sigma,s,\delta,d$ otherwise. In the later case by the minimality of $|V(\delta)\cap S|$, once $\delta$ meets $s\in V(G_1)$, $\delta$ does not return to $V(G_2)\cup V(S)$. So, $\tau$ is an $(v_1,V(\alpha_2))$-path of $G_1+v_1$ in all cases.  Let $t$ and $v_1$ be the endvertices of $\tau$. We have that $B:=v_1,\tau,t,\alpha_2,a_2,\beta_{21}a_1,\alpha_1,v_1$ is a cycle of $G_1+v_1$. 

As $|S|\ge 3$, then by the minimality of $|S|$, $G\del\{u_1,u_2\}$ is $3$-connected. As $u_2$ has degree at least $3$ in $G\del u_1$, $G\del u_1$ is $3$-connected. So $G\del \{u_1,v_1\}$ has two $(u_2,V(X)\cap V(G_1))$-paths $\zeta_1$ and $\zeta_2$ meeting only in $u_2$. Let $H_1:=H_0+S$. Note that $H_1\del V(B)$ is a connected graph containing $u_1$. If $\zeta_k$ meets $H_1\del V(B)$ before meeting $B$, then a cycle of the form $u_1,u_2,\zeta_k,H_1\del V(B),u_1$ avoids $B$, a contradiction. So, $\zeta_k$ does not meet $H_1\del V(B)$ before meeting $B$ and, therefore, $\zeta_1$ and $\zeta_2$ are paths of $G_1+\{u_2,v_1\}$. So one of these paths is in $G_1+u_2$ and is a possibility for the choice of $\delta$. By the minimality of $|V(\delta)\cap S$, $\delta$ is a path of $G_1+u_2$.

Next we check that $|C|\ge4$. Suppose the contrary. By the maximality of $|C|$ and by Lemma \ref{big-cycle}, $u_1$ has at most two neighbors in $G_1$ and, by the maximality of $|N_G(u_i)\cap V(G_k)|$, for each $i,k\in\{1,2\}$, $u_i$ has at most two neighbors in $G_k$. So, since $u_1$ and $u_2$ have degree at least four, it follows that there are $w_1,w_2\in V(G_2)\cup S$ such that $u_1w_1,u_2w_2\in E(G)$. Let $v\in S-\{w_1,w_2\}$. If $v$ has two different neighbors in $G_1$, then a cycle of $G_1+v$ avoids a cycle of the form $u_1,u_2,w_2,G_2,w_1,u_1$, a contradiction. So, by Lemma \ref{neighbor}, $v$ has two different neighbors in $G_2+S$, which has a cycle. But this cycle avoids $C_{\delta,1}$ since $\delta$ is a path of $G_1+u_2$, a contradiction. Hence, $|C|\ge 4$. This implies that $a_1\neq a_2$.

If $\zeta_k$ intersects $\int(\tau)$, then cycles of the forms 
\[u_1,u_2,\zeta_k,\tau,v_1,\alpha_1,a_1,\beta_1,u_1\qquad\text{ and}\]
\[v_3,G_2,v_2,\alpha_2,a_2,\beta_{23},a_3,\alpha_3,v_3\] avoid each other, a contradiction. So, $\zeta_k$ does not intersect $\int(\tau)$. By \ref{bigone-1}, the endvertex $z_k$ of $\zeta_k$ in $X$ is in $\alpha_2$. Say that $a_2$, $z_1$, $z_2$ and $v_2$ appear in this order in $\alpha_2$. As $\zeta_k$ has an endvertex in $B$, then $t\in V(z_2,\alpha_2,v_2)$. This implies that $\tau$ avoids $C_{\zeta_1,1}$. Therefore, $C_{\zeta_1,1}$ avoids a cycle of the form $v_1,\tau,t,\alpha_2,v_2,G_2,v_1$, a contradiction.
\end{proof}

\begin{lemma}\label{Sle2}
If $d_G(u_1),d_G(u_2)\ge 4$, then $|S|\le 2$.
\end{lemma}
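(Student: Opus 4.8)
The plan is to argue by contradiction: assume $|S|\ge 3$ and produce two vertex-disjoint cycles of $G$, one of them using $e$, contradicting that $e$ is a Dirac edge. Two preliminary facts: since $S$ is smallest with $S^+$ a vertex-cut, any vertex-cut of $G\del\{u_1,u_2\}$ of order at most $2$ would become, after adjoining $u_1$ and $u_2$, a vertex-cut of $G$ of order at most $4<|S^+|$, so $G\del\{u_1,u_2\}$ is $3$-connected; and $\kappa\ge 2$ since $S^+$ is a vertex-cut. Throughout I would use Lemma~\ref{edges from S}: for distinct $v_1,v_2\in S$ and distinct $p,q\in\{1,\dots,\kappa\}$ there is a cycle $v_1,G_p,v_2,G_q,v_1$ meeting $S$ only in $v_1,v_2$ and avoiding $u_1,u_2$.

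The first case to dispose of is when, for some $\{i,j\}=\{1,2\}$ and some $k$, there is a pair of edges from $u_i$ to $G_k$. Lemma~\ref{bigone} then gives $\kappa=2$ and (reading (b) as ``every neighbour of $u_j$ other than $u_i$ lies in $G_{3-k}$'') $N_G(u_j)\cont V(G_{3-k})\cup\{u_i\}$; with $d_G(u_j)\ge 4$ this forces $u_j$ to have at least three neighbours in $G_{3-k}$, so Lemma~\ref{big-cycle} (applied with the cut $S^+$) yields a cycle through $u_j$ inside $G[V(G_{3-k})\cup u_j]$ with at least four vertices. Since every path of $G\del u_i$ leaving $u_j$ enters $G_{3-k}$ first, and $G\del u_i$ is $3$-connected (its degree-$\ge3$ vertex $u_j$ is attached to the $3$-connected graph $G\del\{u_1,u_2\}$), I would use Menger to route a $(u_i,u_j)$-path $R$ of $G$ avoiding $e$ that meets $S$ in exactly one vertex $s$, so that $R+e$ is a cycle using $e$ meeting $S$ only in $s$. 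As $|S-s|\ge 2$ and $\kappa=2$, the complement of this cycle still contains a cycle $v_1,G_{3-k},v_2,G_k,v_1$ unless the portion of $G_{3-k}$ used by $R$, together with $s$, blocks all $(v_1,v_2)$-paths through $G_{3-k}$ for every $v_1,v_2\in S-s$; choosing the $G_{3-k}$-portion of $R$ to be a shortest path and invoking Lemma~\ref{neighbor} rules this out, completing the contradiction.

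In the remaining case every $u_i$ sends at most one edge to each component, so $d_G(u_i)\ge 4$ forces $u_i$ to have a neighbour in $S$ whenever $\kappa\le 2$. If $u_1$ and $u_2$ share a neighbour $v\in S$, then $u_1,v,u_2,u_1$ uses $e$ and some $v_1,v_2\in S-v$ give a disjoint cycle $v_1,G_1,v_2,G_2,v_1$; likewise if $u_1,u_2$ have neighbours in a common component. Otherwise I would build a cycle $C$ through $e$ of the shape ``$u_1$, (out through a component or an $S$-vertex), $\dots$, (in through a component or an $S$-vertex), $u_2$, $u_1$'' which can be chosen to meet $S$ in at most $|S|-2$ vertices — because two internally disjoint $(u_1,u_2)$-paths avoiding $e$ use disjoint subsets of $S$, so one of them avoids two vertices of $S$ — and pair $C$ with a cycle on two untouched $S$-vertices and two components. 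For $\kappa\ge 3$ two untouched components are always available, so this is immediate. For $\kappa=2$ only one component is free once $C$ passes through the other, and here I expect the real obstacle to lie: one must finish by a separation argument, observing that if no disjoint cycle lived in $G[S\cup V(G_{3-\ell})]$ together with the unused part of $G_\ell$, then $G[S\cup V(G_{3-\ell})]$ would be a forest, which forces $G_{3-\ell}$ to be a tree and each $v\in S$ to have exactly one neighbour there and none inside $S$, hence by Lemma~\ref{neighbor} at least $|S|-1\ge 2$ neighbours in $G_\ell$ — enough to rebuild a disjoint cycle after re-routing $C$ through $G_{3-\ell}$.

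Thus the bulk of the work is the $\kappa=2$ analysis, where the disjoint cycle cannot be obtained for free and one must trade the amount of a component consumed by the $e$-cycle against keeping a cycle elsewhere; I would isolate the forest-exclusion (via Lemma~\ref{neighbor}) as the one genuinely computational step and reduce everything else to the routine cycle constructions supplied by Lemmas~\ref{edges from S}, \ref{neighbor}, \ref{big-cycle}, \ref{menger}, and \ref{starbucks}.
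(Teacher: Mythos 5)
Your overall strategy matches the paper's: assume $|S|\ge 3$, dispose first of the case where some $u_i$ sends two edges into one component via Lemma~\ref{bigone}, then rule out shared neighbours, force $\kappa=2$, and finish with a degree count from Lemma~\ref{neighbor}. But the two hardest steps have genuine gaps. In the double-edge case you apply Lemma~\ref{bigone} only once, obtaining $N_G(u_j)\subseteq V(G_{3-k})\cup\{u_i\}$, and then try to route a cycle $R+e$ through a single vertex $s$ of $S$ and find a second cycle on $S-s$ disjoint from it. The step ``choosing the $G_{3-k}$-portion of $R$ to be a shortest path and invoking Lemma~\ref{neighbor} rules this out'' is not a proof: a shortest path can still, together with $s$, separate the neighbourhoods of the remaining $S$-vertices inside $G_{3-k}$, and Lemma~\ref{neighbor} only bounds degrees, not the connectivity of $G_{3-k}\setminus V(R)$. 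The paper avoids this entirely by observing that the conclusion $N_G(u_j)\subseteq V(G_{3-k})$ makes the hypothesis of Lemma~\ref{bigone} hold again with the roles of $i$ and $j$ swapped, so that also $N_G(u_i)\subseteq V(G_k)$; then each $u_l$ has at least three neighbours in its own component, Lemma~\ref{big-cycle} gives a cycle $C_l$ on more than three vertices in $G_l+u_l$, and three disjoint $(C_1-u_1,C_2-u_2)$-paths in the $3$-connected graph $G\setminus\{u_1,u_2\}$, together with the edge $e$, give four disjoint $(C_1,C_2)$-paths from which two disjoint cycles, one through $e$, are extracted. You should adopt this symmetric double application; your routing argument as written does not close.

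Second, your claim that for $\kappa\ge 3$ ``two untouched components are always available'' is false as stated: a cycle through $e$ meeting $S$ in at most $|S|-2$ vertices can still traverse two or more components, e.g.\ $u_1,G_1,s,G_2,u_2,u_1$, so for $\kappa=3$ only one component may remain free. The paper's reduction to $\kappa=2$ is instead a case analysis on whether $u_1$ or $u_2$ has a neighbour in $S$ and on whether they attach to a common component, each branch producing an explicit pair of disjoint cycles (the last branch needs the additional observation that, by the first case, $\kappa\ge 4$ there). Your $\kappa=2$ endgame is essentially sound and close to the paper's final paragraph, though the implication ``no disjoint cycle in $G[S\cup V(G_{3-\ell})]$ implies that graph is a forest'' requires the cycle through $e$ to avoid the $S$-vertices used by the candidate cycle, which has to be arranged explicitly.
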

\begin{proof}\setcounter{rot}{0}
Suppose the contrary. 

\begin{rot}\label{Sle2-1}
There is no $i\in\{1,2\}$ and $k\in\{1,\dots,\kappa\}$ such that there are two edges from $u_i$ to $G_k$.
\end{rot}
\begin{rotproof}
Suppose for a contradiction that there are two edges linking $u_i$ to $G_k$. Say $i=k=1$. By Lemma \ref{bigone}, $N_G(u_2)\cont V(G_2)$. But this implies that the hypothesis of Lemma \ref{bigone} also holds for $i=k=2$, then $N_G(u_1)\cont V(G_1)$. For $l=1,2$, as $d_G(u_l)\ge 4$, then, by Lemma \ref{big-cycle}, there is a cycle $C_l$ of $G_l+u_l$ containing $u_l$ with more than $3$-vertices. By the minimality of $|S|$, $G\del \{u_1,u_2\}$ is $3$-connected. So, there are three vertex-disjoint $(C_1-u_1,C_2-u_2)$-paths in $G\del u_1,u_2$. Together with the path $u_1,u_2$, we have four vertex-disjoint $(C_1,C_2)$-paths. Now it is easy to check that there are vertex-disjoint cycles covering these paths. And, therefore, we have two disjoint cycles with one of them containing $e$, a contradiction.
\end{rotproof}

\begin{rot}\label{Sle2-2}
$u_1$ and $u_2$ have no common neighbor in $S$. 
\end{rot}
\begin{rotproof}
If $w$ contradicts \ref{Sle2-2}, then $u_1,u_2,w,u_1$ is a cycle containing $e$ that, for some $x,y\in S-w$, avoids a cycle of the form $x,G_1,y,G_2,x$.
\end{rotproof}

\begin{rot}\label{Sle2-3}
$\kappa=2$.
\end{rot}
\begin{rotproof}
Suppose the contrary. First we suppose that $u_1v\in E(G)$ for some $v\in S$. If $N_G(u_2)\cont S\cup u_1$, then, as $d_G(u_2)\ge 4$, by \ref{Sle2-2}, it follows that there is a $3$-subset $\{x_1,x_2,x_3\}$ of $N_G(u_2)-\{u_1,v\}$ and cycles of the form  $u_1,u_2,x_1,G_1,v,u_1$ and $x_2,G_2,x_3,G_3,x_2$ avoiding each other, a contradiction. So, we may assume that there is an edge from $u_2$ to $G_1$. But this implies that, for $x,y\in S-v$, cycles of the form $u_1,u_2,G_1,v,u_1$ and $x,G_2,y,G_3,x$ avoid each other, a contradiction. Hence, $u_1$ has no neighbor in $S$ and, analogously, neither has $u_2$. 

If both $u_1$ and $u_2$ have neighbors in a common component of $G\del S^+$, say $G_1$, we have, for  $x,y\in S$, cycles of the forms $u_1,u_2,G_1,u_1$ and $x,G_2,y,G_3,x$ avoiding each other, a contradiction. Therefore, $u_1$ and $u_2$ have no neighbors in $S$ nor in a same connected component of $G\del S^+$. By \ref{Sle2-1}, $\kappa\ge 4$ and we may assume that $u_i$ has a neighbor in $G_i$ for $i=1,2$. Now, for distinct $x,y,z\in S$, cycles of the form, $u_1,u_2,G_2,z,G_1,u_1$ and $x,G_3,y,G_4,x$ avoid each other, a contradiction. 
\end{rotproof}

Let $k\in\{1,2\}$. By \ref{Sle2-1} and \ref{Sle2-3}, $u_k$ has a neighbor $v_k\in S$. Let $x\in S-\{v_1,v_2\}$. If $x$ has two neighbors in $G_1$, then $G_1+x$ has a cycle avoiding a cycle of the form $u_1,u_2,v_2,G_2,v_1,u_1$. Thus, $x$ has only one neighbor in $G_1$ and, analogously, only one neighbor in $G_2$. By Lemma \ref{neighbor}, $x$ has a neighbor $y\in S$. If $y\notin \{v_1,v_2\}$, a cycle of the form $u_1,u_2,v_2,G_1,v_1,u_1$ avoids a cycle of the form $x,y,G_2,x$, a contradiction. So, we may assume that $y=v_1$. As $d_G(u_1)\ge 4$, there is $w\in N_G(u_1)-\{u_2,v_1\}$ and we also may assume that $w\notin V(G_2)$ since $u_1$ has at most one neighbor in $G_2$. Now a cycle of the form $x,y,G_2,x$ avoids a cycle of the form $u_1,u_2,v_2,G_1,w,u_1$, a contradiction.
\end{proof}


\begin{lemma}\label{fan side}
Suppose that $|S|=2$ and, for some connected component $J$ of $G\del S^+$, there are edges from both $u_1$ and $u_2$ to $J$. If $K$ is a connected component other than $J$, then $G[K\cup S]$ is a path with endvertices in $S$, moreover each vertex of $K$ has a neighbor in $\{u_1,u_2\}$.
\end{lemma}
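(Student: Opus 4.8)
The plan is to manufacture from the component $J$ a cycle through $e$ whose vertex set avoids $K\cup S$, and then to pit this cycle against any cycle living inside $G[K\cup S]$. Write $S=\{s_1,s_2\}$. Since $J$ is connected and both $u_1$ and $u_2$ send an edge into $J$, choose a neighbour $p\in V(J)$ of $u_1$ and a neighbour $q\in V(J)$ of $u_2$; a $p$--$q$ path inside $J$ (the trivial path, if $p=q$), closed up through $e$, yields a cycle $D$ with $e\in E(D)$ and $V(D)\subseteq V(J)\cup\{u_1,u_2\}$.

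Next I would show that $T:=G[V(K)\cup S]$ is a tree. It is connected because $K$ is connected and, by Lemma \ref{edges from S}, each of $s_1,s_2$ has a neighbour in $K$. It is acyclic because any cycle $C$ of $T$ has $V(C)\subseteq V(K)\cup S$, a set disjoint from $V(D)\subseteq V(J)\cup\{u_1,u_2\}$; then $C$ and $D$ would be vertex-disjoint cycles whose union contains $e$, contradicting that $e$ is a Dirac edge.

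The only step that needs an idea is: \emph{no vertex of $K$ is a leaf of $T$.} Since $K$ is a connected component of $G\setminus S^+$, every $x\in V(K)$ satisfies $N_G(x)\subseteq V(K)\cup S\cup\{u_1,u_2\}$, so $x$ has at least $d_G(x)-2\ge 1$ neighbours in $V(K)\cup S$ (using that $G$ is $3$-connected). If $x$ had exactly one, then $d_G(x)=3$ and $x$ would be adjacent to both $u_1$ and $u_2$, hence would have degree $1$ in $G\setminus\{u_1,u_2\}$. But the minimality of $S$ rules out a vertex cut of the form $\{v,u_1,u_2\}$, so $G\setminus\{u_1,u_2\}$ is $2$-connected; as $G$ has at least six vertices, this graph has minimum degree at least $2$, a contradiction. (One could instead note that $u_1u_2x$ is a triangle through $e$ and hunt for a disjoint cycle inside $G[V(J)\cup S]$, but that is longer; I expect the main pitfall to be going that way rather than spotting the degree argument.)

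Finally I would assemble the pieces: $T$ is a tree on at least three vertices all of whose leaves lie in the two-element set $S$, so its leaves are exactly $s_1$ and $s_2$ and $T$ is a path from $s_1$ to $s_2$; that is, $G[K\cup S]$ is a path with endvertices in $S$. For the last clause, each $x\in V(K)$ is an internal vertex of this path, so it has exactly two neighbours in $V(K)\cup S$; since $d_G(x)\ge 3$ and its only other possible neighbours are $u_1$ and $u_2$, it has a neighbour in $\{u_1,u_2\}$.
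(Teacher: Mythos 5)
Your proof is correct. It shares the paper's opening and closing steps --- building a cycle through $e$ inside $J+\{u_1,u_2\}$ to force $G[V(K)\cup S]$ to be a tree, and then the degree-at-least-three count to get the final clause --- but the middle step, pinning down the leaves of that tree, is handled by a genuinely different argument. The paper shows that the leaves of $K$ itself are exactly the neighbours $v_x,v_y$ of the two vertices of $S$ by exhibiting a \emph{second} pair of disjoint cycles: the triangle $u_1,u_2,l$ against a cycle of the form $x,K-l,y,J,x$; this requires checking that $K-l$ is still connected and that such a cycle really closes up through $J$. You instead show directly that no vertex of $K$ can be a leaf of $G[V(K)\cup S]$ by a purely local count: each $x\in V(K)$ has all its neighbours in $V(K)\cup S\cup\{u_1,u_2\}$, and having only one neighbour in $V(K)\cup S$ would force degree $1$ in $G\setminus\{u_1,u_2\}$, which is impossible since the minimality of $S$ makes $G\setminus\{u_1,u_2\}$ $2$-connected. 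This is essentially Lemma \ref{neighbor} of the paper specialised to $|S|=2$ (you could have cited it outright instead of rederiving it), and it buys you a cleaner argument: you use the Dirac property only once, and you avoid the connectivity bookkeeping for $K-l$. The paper's route, by contrast, needs no degree lemma and stays entirely within the ``find two disjoint cycles'' paradigm. Both are valid; your assembly of the endgame (a tree with at least three vertices whose leaves all lie in the two-element set $S$ must be an $(s_1,s_2)$-path) is sound.
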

\begin{proof}
Note that $J+\{u_1,u_2\}$ has a cycle containing $e$. Therefore, $G[K\cup S]$ is a tree and so is $K$. This implies that each element of $S$ has an unique neighbor in $K$. Define $S:=\{x,y\}$ and let $v_x$ and $v_y$ be the neighbors of $x$ and $y$ respectively in $K$. Let us check that $v_x$ and $v_y$ are the unique leaves of $K$. Suppose for a contradiction that $l$ is a leave of $K$ different from $v_x$ and $v_y$. This implies that $u_1,u_2,l$ is a cycle of $G$ avoiding a cycle of the form $x,K-l,y,J,x$, a contradiction. So, $v_x$ and $v_y$ are the unique leaves of $K$ and $K$ is an $(v_x,v_y)$-path. Hence, $G[K\cup S]$ is a path with endvertices in $S$. Since each vertex of $G$ has degree at least $3$ and $v_x$ and $v_y$ are the unique neighbors of $x$ and $y$ in $K$ respectively, then each vertex of $K$ has a neighbor in $\{u_1,u_2\}$. This proves the lemma.
\end{proof}



The next Lemma gives a full characterization of the Dirac graphs when $|S|=1$ and has an elementary proof. 

\begin{lemma}\label{all type b}
Let $H$ be a $3$-connected graph and suppose that $e:=uv\in E(H)$ is an edge of $H$ and $w\in V(H)$ is such that $\{u,v,w\}$ is a $3$-vertex cut of $H$. Then the following assertions are equivalent:
\begin{enumerate}
 \item [(a)] $H$ is $e$-Dirac.
 \item [(b)] Each connected component $K$ of $H\del \{u,v,w\}$ is a tree with a special vertex $x_K$ such that $N_H(w)\cap V(K)=\{x_K\}$.
\end{enumerate}
\end{lemma}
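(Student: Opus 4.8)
The plan is to prove both implications directly. Throughout, write $X=\{u,v,w\}$ and let $K_1,\dots,K_m$ be the connected components of $H\del X$; note $m\ge 1$ since $X$ is a vertex-cut, and $3$-connectedness forces every $K_j$ to send edges to at least two vertices of $X$. For the implication (b)$\Rightarrow$(a), suppose (a) fails, so there are vertex-disjoint cycles $C,D$ of $H$ with $e=uv\in E(C)$. Since $u,v\in V(C)$, the cycle $D$ lives in $H\del\{u,v\}$, hence in $(H\del X)\cup w$; as $D$ is $2$-connected it cannot be separated by the single vertex $w$, so $D$ is contained in $K_j\cup w$ for a single component $K_j$. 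But each $K_j$ is a tree, so the only cycle available in $K_j\cup w$ would have to pass through $w$ and use at least two neighbors of $w$ in $K_j$ — impossible, since (b) says $w$ has a unique neighbor $x_{K_j}$ in $K_j$. (If $D$ avoids $w$ entirely it lies in the forest $K_j$, also impossible.) This contradiction gives (a).

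For (a)$\Rightarrow$(b), assume $H$ is $e$-Dirac; I must show each component $K=K_j$ is a tree and $|N_H(w)\cap V(K)|=1$. First, if some component $K$ contains a cycle $D$, I would build a cycle $C$ through $e$ disjoint from $D$: since $X$ is a vertex-cut there is another component $K'$ (here $3$-connectedness of $H$ and Lemma \ref{edges from S}-type reasoning — more precisely, minimality is not available here, so I argue directly that $H\del X$ cannot have a single component $K$ with the rest of $H$ attached only along $X$, because then $D$ together with suitable paths through $\{u,v\}$ and $K\del V(D)$ would still yield disjoint cycles; alternatively just note $w$ must have neighbors outside $K$, or $\{u,v\}$ is a $2$-cut, contradicting $3$-connectivity). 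Using $3$-connectivity to route internally disjoint paths from $\{u,v\}$ to $V(D)$ and completing through $D$, I obtain a cycle on $e$ disjoint from a cycle living in another component together with $w$, or more simply: pick $x\in V(D)$, find via Menger three paths from $x$ to $H\del(V(D)\del x)$... — the cleanest version is to contract $D$ to a single vertex and use that $H$ is still $3$-connected around it to find the second cycle on $e$. So every $K_j$ is a forest, hence a tree (being connected).

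Next, still assuming (a), suppose some component $K$ has two distinct neighbors $x,y$ of $w$. Then $w$ together with the unique $x$–$y$ path $P$ in the tree $K$ forms a cycle $D=w,P,x,\dots,y,w$; this $D$ avoids $\{u,v\}$. I then need a cycle through $e$ disjoint from $D$, i.e. a cycle through $u,v$ inside $H\del V(D)$. Since $|H|\ge 6$ (as $H$ is $3$-connected with a $3$-cut and $\ge 2$ components, or a component $K$ of size $\ge 2$), and $H$ is $3$-connected, deleting the path $D$ (which meets $X$ only in $w$) leaves $u,v$ with enough connectivity: by Menger there are three internally disjoint $(u,v)$-paths in $H$, at most one of which can meet $V(D)$ (since $D\del w$ lies in a single component and a second meeting path would have to re-enter through $w$); any path through $u,v$ avoiding $V(D)$ together with the edge $e$ gives the desired cycle $C$, contradiction. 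The hard part will be the bookkeeping in the last two paragraphs ensuring the auxiliary cycle can always be routed disjointly — i.e. handling the degenerate cases where $H\del X$ has only one component, and making precise the Menger/contraction arguments that produce the second cycle; these should follow from $3$-connectivity together with Lemmas \ref{starbucks} and \ref{big-cycle}, which are tailored to exactly this situation.
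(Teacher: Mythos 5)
The paper states this lemma without proof (it is flagged as having ``an elementary proof''), so your argument has to stand on its own. Your (b)$\Rightarrow$(a) direction is fine. The single observation you are missing for (a)$\Rightarrow$(b), and which makes that whole direction a three-line argument, is that \emph{every} vertex of $\{u,v,w\}$ has a neighbour in \emph{every} component of $H\del\{u,v,w\}$: if, say, $w$ had no neighbour in a component $K$, then $\{u,v\}$ would separate $K$ from the rest of $H$, which is nonempty because a vertex cut leaves at least two components (your ``$m\ge1$'' should be $m\ge2$, and there is no ``degenerate case where $H\del X$ has only one component'' to handle). Consequently, for any component $K$ there is a cycle $C_e$ through $e$ whose interior lies entirely in a \emph{different} component $K'$. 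If $K$ contained a cycle $D$, or if $w$ had two neighbours in the tree $K$ (yielding a cycle $D$ in $K+w$ that avoids $u,v$), then $C_e$ and $D$ would be vertex-disjoint, contradicting (a). This same observation supplies the ``at least one neighbour'' half of $N_H(w)\cap V(K)=\{x_K\}$, which your proposal never addresses.

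Your substitutes for this observation do not work as written. In the forest step you ``route internally disjoint paths from $\{u,v\}$ to $V(D)$ and complete through $D$'', which produces a cycle through $e$ that \emph{meets} $D$ rather than avoids it, and you then pair it with ``a cycle living in another component together with $w$'' --- a cycle that need not exist, since that other component may be a tree in which $w$ has a unique neighbour; the contraction and Menger variants you sketch afterwards are never completed. In the unique-neighbour step, the claim that at most one of three internally disjoint $(u,v)$-paths can meet $V(D)$ is not justified by ``a second meeting path would have to re-enter through $w$'': a second path can enter $K$ directly along an edge from $u$ or $v$ to a vertex of the path $P\subseteq D$, so two of the Menger paths can meet $V(D)$ while avoiding $w$ entirely. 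All of these difficulties disappear once the cycle through $e$ is routed through a component other than the one hosting $D$, which is exactly the pattern the paper uses repeatedly elsewhere (e.g.\ in the proofs of Lemmas \ref{Sle2} and \ref{fan side}).
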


\section{One-, two- and no-sided separations}\label{sec-one-two}


Let $X$ be a $4$-vertex-cut of $G$ containing $\{u_1,u_2\}$. We say that $X$ is \defin{two-sided} if, for each component $K$ of $G\del X$, there is $\{i,j\}=\{1,2\}$ such that there is an edge from $u_i$ to $K$ but no edge from $u_j$ to $K$. We say that $X$ is \defin{one-sided} if, for each component $K$ of $G\del X$, there are edges from both $u_1$ and $u_2$ to $K$. We say that $X$ is \defin{no-sided} if $X$ is neither one-sided nor two-sided.

\begin{lemma}\label{ugly}
Suppose that $|S|=2$, $d_G(u_1),d_G(u_2)\ge4$ and all $4$-vertex-cuts of $G$ containing $\{u_1,u_2\}$ are no-sided. Then $G$ is obtained from a wheel by possibly doubling some spokes and splitting the hub by $e$.

\end{lemma}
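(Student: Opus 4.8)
The plan is to show that $H:=G\del\{u_1,u_2\}$ is a cycle $C$ (the rim) and that every vertex of $C$ is adjacent in $G$ to $u_1$ or to $u_2$; the latter is then automatic, since a vertex of $H$ adjacent to neither would have degree $2$ in $G$, contradicting $3$-connectivity. Granting this, contracting $e$ identifies $u_1,u_2$ to a single vertex $h$ joined to every vertex of $C$ -- doubly to exactly those adjacent to both $u_1$ and $u_2$ -- so $G/e$ is a wheel on $|C|\ge 4$ spokes with some of them doubled, and $G$ is obtained from that wheel by doubling the corresponding spokes and splitting the hub by $e$. Simplicity, $3$-connectivity and $d_G(u_1),d_G(u_2)\ge 4$ are hypotheses, so this is exactly the asserted conclusion. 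Two preliminary facts: $H$ is connected by $3$-connectivity, and a cut vertex $c$ of $H$ would make $\{u_1,u_2,c\}$ a vertex-cut of $G$, contradicting the minimality of $|S|=2$; hence $H$ is $2$-connected, and $\delta(H)\ge 2$ by Lemma \ref{neighbor}.

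Next I would pin down the components of $G\del S^+$, writing $S=\{x,y\}$. Each such component has a neighbour in $\{u_1,u_2\}$, since otherwise $\{x,y\}$ would be a $2$-cut of $G$. As $S^+$ is not two-sided, some component $J$ is adjacent to both $u_1$ and $u_2$; Lemma \ref{fan side} applied with this $J$ shows that every other component $K$ has $G[V(K)\cup S]$ an induced $(x,y)$-path, each of whose vertices is adjacent to $u_1$ or $u_2$. If $\kappa\ge 3$, then two such path-components union to a cycle through $x$ and $y$ avoiding $V(J)\cup\{u_1,u_2\}$, while $J$ being adjacent to both $u_1$ and $u_2$ yields (via a $u_1$--$u_2$ path through $J$ together with $e$) a cycle using $e$ inside $G[V(J)\cup\{u_1,u_2\}]$ -- two vertex-disjoint cycles, one using $e$, which is impossible. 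If $\kappa=1$, the single component makes $S^+$ one-sided or two-sided, also impossible. Hence $\kappa=2$; since $S^+$ is not one-sided, exactly one component, say $G_1$, is the one adjacent to both $u_1$ and $u_2$, and the other, $G_2$, is adjacent to just one of them -- say $u_1$, so $u_2$ has no neighbour in $V(G_2)$. By Lemma \ref{fan side}, $G[V(G_2)\cup S]$ is an induced $(x,y)$-path all of whose vertices are adjacent to $u_1$; in particular $x\not\sim y$, and since $u_2\sim u_1$ and $u_2\not\sim V(G_2)$, the bound $d_G(u_2)\ge 4$ forces $u_2$ to have at least three neighbours in $V(G_1)\cup S$.

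The heart of the proof is to show that $G[V(G_1)\cup S]$ is \emph{also} an induced $(x,y)$-path. It is connected and, by the $2$-connectivity of $H$, contains an $(x,y)$-path; so if it were not a path it would contain a cycle or a vertex of degree at least three inside $G[V(G_1)\cup S]$. In either case I would manufacture two vertex-disjoint cycles of $G$ with $e$ in one of them, using as ingredients: the ``extra'' cycle or branch living in $G_1$; the path $G[V(G_2)\cup S]$, every vertex of which $u_1$ dominates; the neighbours of $u_1$ and of $u_2$ inside $V(G_1)$ supplied by $G_1$ being adjacent to both; and the at least three neighbours of $u_2$ in $V(G_1)\cup S$. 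When these neighbours are ``buried'' inside the extra structure, I would instead examine a $4$-vertex-cut of $G$ containing $\{u_1,u_2\}$ that splits off a piece of $G_1$ and use that it, too, is no-sided, carrying out the routing with Lemmas \ref{big-cycle}, \ref{menger} and \ref{starbucks}. This is the step I expect to be the main obstacle, and it is where the hypothesis that \emph{every} such $4$-cut -- not merely $S^+$ -- is no-sided is genuinely needed: it is precisely what rules out the type~(c) graphs.

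Finally, granting that both $G[V(G_1)\cup S]$ and $G[V(G_2)\cup S]$ are induced $(x,y)$-paths, and observing that there are no edges between $V(G_1)$ and $V(G_2)$ (distinct components of $G\del S^+$) and that $x\not\sim y$, we conclude that $H=G[V(G_1)\cup V(G_2)\cup S]$ is exactly a cycle $C$ with $|C|=|G|-2\ge 4$. As noted, each vertex of $C$ is then adjacent to $u_1$ or $u_2$, and $d_G(u_1),d_G(u_2)\ge 4$ gives each of $u_1,u_2$ at least three neighbours on $C$; the wheel description above now applies, completing the proof.
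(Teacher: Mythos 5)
Your reduction to the case $\kappa=2$ is correct and follows the same opening moves as the paper: every component of $G\del S^+$ meets $N(u_1)\cup N(u_2)$, the no-sided hypothesis plus Lemma \ref{fan side} kills $\kappa\ge 3$ and $\kappa=1$, and one is left with a component $G_2$ forming a fan over $u_1$ and a component $G_1$ adjacent to both $u_1$ and $u_2$. The target you set --- $G\del\{u_1,u_2\}$ is a cycle dominated by $\{u_1,u_2\}$ --- is also the right one.

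However, the step you yourself call ``the heart of the proof,'' namely that $G[V(G_1)\cup S]$ is an induced $(x,y)$-path, is essentially the entire content of the lemma, and you do not prove it: ``I would manufacture two vertex-disjoint cycles \dots carrying out the routing with Lemmas \ref{big-cycle}, \ref{menger} and \ref{starbucks}'' is a plan, not an argument, and the routing in question is genuinely delicate. Three ingredients that the paper needs here are absent from your sketch. First, $S$ must be chosen \emph{extremally}, maximizing the number of vertices of the one-sided component $L(S)$ over all admissible $2$-sets; this maximality is what forces each vertex of $S$ to have at least two neighbours in $R(S)=G_1$ and forces $u_2$ to have \emph{no} neighbour in $S$ (your weaker count, ``$u_2$ has at least three neighbours in $V(G_1)\cup S$,'' does not suffice for the later cycle constructions). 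Second, the full strength of the hypothesis that \emph{every} $4$-cut through $\{u_1,u_2\}$ is no-sided enters by showing that $G\del e$ is $3$-connected (a $2$-cut of $G\del e$ would produce a two-sided $4$-cut); this is what licenses the five-path Menger configuration (three disjoint paths from $\{v_1,v_2,u_1\}$ and two from $u_2$ to a hypothetical cycle $C$ of $R(S)$) with which, via Lemma \ref{starbucks}, one shows $R(S)$ is a tree. Third, even after $R(S)$ is known to be a tree, one must analyse its leaves to see it is a path and then derive a contradiction from the relative positions along that path of the attachments of $u_2$ and of $L(S)$, concluding $|R(S)|=1$. Without carrying out these steps the proof is incomplete at exactly the point where the work is.
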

\begin{proof}
\stepcounter{rotcount}Suppose that the lemma fails. First we check that $G\del X$ has exactly two connected components for each $4$-vertex cut $X$ containing $\{u_1,u_2\}$. Suppose that $X$ contradicts this assertion. As $X$ is no-sided, there is a connected component $H_1$ of $G\del X$ with neighbors of both $u_1$ and $u_2$ and two distinct connected components $H_2,H_3\neq H_1$ of $G\del X$. Then, for $X=\{u_1,u_2,x,y\}$, a cycle in the from $u_1,u_2,H_1,u_1$ avoids a cycle of the form $x,H_2,y,H_3,x$, a contradiction. Thus $G\del X$ has exactly two connected components.

Now, for each $2$-subset $A$ of $V(G)-\{u_1,u_2\}$ such that $A\cup\{u_1,u_2\}$ is a $4$-vertex cut of $G$, $A\cup\{u_1,u_2\}$ is no-sided by hypothesis. The connected component of $G\del A\cup\{u_1,u_2\}$ containg neighbors of both $u_1$ and $u_2$
 will be called the {\it $A$-right} component and denoted by $R(A)$, while the component containing neighbors of only one element of $\{u_1,u_2\}$ will be called the {\it $A$-left} component and denoted by $L(A)$. Choose $S$ maximizing the number of vertices in $L(S)$. Say that $u_1$ has a neighbor in $L(S)$ and $u_2$ does not.

Write $S=\{v_1,v_2\}$. By Lemma \ref{fan side}, $G[L(S)\cup\{u_1,v_1,v_2\}]$ is a fan with hub $u_1$ and endvertices $v_1$ and $v_2$. If $|R(S)|=1$, then it is clear that the lemma holds. So, $|R(S)|\ge 2$.

\begin{rot}\label{ugly-1}
For $i=1,2$, $v_i$ has at least two neighbors in $R(S)$. Moreover, $v_1u_2,v_2u_2,v_1v_2\notin E(G)$ and $u_2$ has at least $3$ neighbors in $R(S)$.
\end{rot}
\begin{rotproof}
For the first part, say that $v_1$ has an unique neighbor $x\in R(S)$. Then, for $A:=\{x,v_2\}$, $A\cup\{u_1,u_2\}$ separates $L(S)+ v_1$ from $R(S)\del x$, which is non-empty because $|R(S)|\ge 2$. If $u_2v_1\notin E(G)$, then there are no edges from $u_2$ to $L(S)\cup v_1$ and $L(A)=L(S)+v_1$, contradicting the maximality of $L(S)$. Thus  $u_2v_1\in E(G)$. So, both $u_1$ and $u_2$ have neighbors in $L(S)+v_1$. This implies that $R(A)=L(S)+v_1$ and $L(A)=R(S)\del x$. Now, for an unique $k\in\{1,2\}$, $u_k$ has neighbors in $L(A)$. By Lemma \ref{fan side}, $G[L(S)\cup S]$ is a $(v_1,v_2)$-path. By Lemma \ref{fan side} (for $S=A$), $G[L(A)\cup A]=G[R(S)\cup v_2]$ is a $(v_2,x)$-path. Recall that $x$ is the unique neighbor of $v_1$ in $R(S)$. Hence $G\del u_1,u_2$ is a cycle and the Lemma holds in this case, a contradiction. Therefore, $v_i$ has at least two neighbors in $R(S)$ for $i=1,2$.

Now, suppose for a contradiction that $v_2u_2\in E(G)$. Since there are two edges from $v_1$ to $R(S)$, hence $R(S)+v_1$ has a cycle, which must avoid a cycle of the form $u_1,u_2,v_2,L(S),u_1$, a contradiction. So, $v_2u_2\notin E(G)$. Similarly $v_1u_2\notin E(G)$. As $d_G(u_2)\ge 4$, $u_2$ has at least $3$ neighbors in $R(S)$. We already saw that $v_1v_2\notin E(G)$, since $G[L(S)\cup\{v_1,v_2\}]$ is a $(v_1,v_2)$-path. So, \ref{ugly-1} holds.
\end{rotproof}

\begin{rot}\label{ugly-0}
$G\del e$ is $3$-connected.
\end{rot}
\begin{rotproof}
Suppose for a contradiction that $P$ is a $2$-vertex cut of $G\del e$. As $G$ is $3$-connected, $P$ does not meet $\{u_1,u_2\}$ and $u_1$ and $u_2$ are in different connected components $K_1$ and $K_2$ of $G\del P\cup e$ respectively. If $|V(K_i)|=1$, then $d_G(u_i)\le 3$, contradicting our hypothesis, so $|V(K_i)|\ge 2$. This implies that $P\cup\{u_1,u_2\}$ is a $4$-vertex cut of $G$. But, since $G\del P\cup e$ has $u_1$ and $u_2$ in different connected components, it follows that $P\cup\{u_1,u_2\}$ is two-sided, contradicting our hypothesis.
\end{rotproof}

\begin{rot}\label{ugly-2}
$R(S)$ is a tree.
\end{rot}
\begin{rotproof} 
By \ref{ugly-0}, there are three pairwise disjoint $(\{v_1,v_2,u_1\},V(C))$-paths $\alpha_1$, $\alpha_2$ and $\alpha_3$ in $G\del e$. As $\{u_1,v_1,v_2\}$ separates $L(S)$ from $C$ in $G\del e$, none of these paths meet $L(S)$. Say that $\{v_1,a_1\}$, $\{v_2,a_2\}$ and $\{u_1,a_3\}$ are the respective pairs of endvertices of $\alpha_1$, $\alpha_2$ and $\alpha_3$ in $C$. Consider also $(u_2,V(C))$-paths $\beta_1$ and $\beta_2$ of $G\del u_1$ intersecting only in $u_2$ and let $b_1$ and $b_2$ be the respective endvertices of $\beta_1$ and $\beta_2$ in $C$. 

If for some $j\in\{1,2\}$ and $i\in \{1,2,3\}$, $\beta_j$ meets $\alpha_i$ out of $C$, then $G$ has a \\ $(u_2,\{v_1,v_2,u_1\})$-path $\gamma$ avoiding $C$ and, as a consequence, $C$ avoids a cycle of the form $u_1,u_2,\gamma,L(S)+\{v_1,v_2,u_1\}$ containing $e$, a contradiction. So, $\alpha_i$ and $\beta_j$ do not meet out of $C$.

Let $\delta$ and $\varepsilon$ be the the $(a_1,a_2)$-paths of $C$ meeting and avoiding $a_3$ respectively. If $\beta_j$ has an endvertex in $\int(\delta)$, then, the cycle $u_1,u_2,\beta_j,b_j,\delta,a_3,\alpha_3,u_1$ avoids a cycle of the form $v_1,\alpha_1,a_1,\varepsilon,a_2,v_2,L(S),v_1$, a contradiction. So, $b_1$ and $b_2$ are in $\varepsilon$ and we may assume that $b_1$ is closer to $a_1$ than $a_2$ in $\varepsilon$. See an illustration in Figure \ref{fig-ugly}.

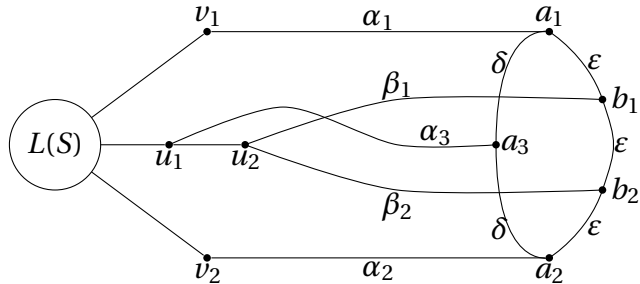
\begin{figure}[h]\label{fig-ugly}
\centering
\begin{tikzpicture}
\tikzstyle{node_style} =[shape = circle,fill = black,minimum size = 2pt,inner sep=1pt]
\node[node_style] (u1) at (-3,1.5) {};\node()at(-3.0,1.3) {$u_1$};
\node[node_style] (v1) at (-2.5,3.0) {};\node()at(-2.5,3.2) {$v_1$};
\node[node_style] (v2) at (-2.5,0.0) {};\node()at(-2.5,-0.2) {$v_2$};
\node[node_style] (u2) at (-2,1.5){};\node()at(-2,1.3) {$u_2$};
\node[node_style] (a1) at (2,3) {};\node()at(2,3.2) {$a_1$}; 
\node[node_style] (a2) at (2,0) {};\node()at(2,-0.2){$a_2$};
\node[node_style] (a3) at (1.3,1.5) {};\node() at (1.55,1.5) {$a_3$};
\node[node_style] (b1) at (2.7,2.1) {};\node() at (3,2.1) {$b_1$};
\node[node_style] (b2) at (2.7,0.9) {};\node() at (3,0.9) {$b_2$};
\draw plot [smooth, tension=1] coordinates {(a1) (b1) (b2) (a2)};
\draw plot [smooth, tension=2] coordinates {(a1) (a3) (a2)};
\draw (v1) to (a1); \node () at (-0.25,3.15) {$\alpha_1$};
\draw plot [smooth] coordinates {(u1)(-1.5,2)(0,1.5)(a3)}; \node () at (0.5,1.63) {$\alpha_3$};
\draw (v2) to (a2); \node () at (-0.25,-0.17) {$\alpha_2$};
\draw plot [smooth] coordinates{(u2)(0,2.1)(b1)};\node () at (0,2.29) {$\beta_1$};
\draw plot [smooth] coordinates{(u2)(0,0.9)(b2)};\node () at (0,0.68) {$\beta_2$};
\draw (u1)--(u2);
\node () at (2.95,1.5) {$\varepsilon$};
\node () at (2.6,2.6) {$\varepsilon$};
\node () at (2.6,0.4) {$\varepsilon$};
\node () at (1.35,2.6) {$\delta$};
\node () at (1.35,0.4) {$\delta$};
\node (los) at (-4.5,1.5) {};
\draw (v1)--(los);\draw (v2)--(los);\draw (u1)--(los);
\draw (los) [fill=white] circle (0.6cm);
\node () at (los) {$L(S)$};
\end{tikzpicture}
\caption{A illustration of the proof of \ref{ugly-2}}
\end{figure}

Consider the following subgraph of $G$ 
\begin{linenomath*}\[H:=
(L(S)+\{v_1,v_2,u_1\})\cup C\cup\alpha_1\cup\alpha_2\cup\alpha_3\cup\beta_1\cup\beta_2.\]
\end{linenomath*}
By \ref{ugly-1}, $v_1$ has two different neighbors in $R(S)$ and there is a $(v_1,V(H))$-path $\varphi$ of $R(S)+v_1$ beginning with an edge out of $H$. Let $x$ be the other endvertex of $\varphi$. Next we consider, for $i=1,2$, the following circuit:
\begin{linenomath*}\[D_i:=u_1,u_2,\beta_i,b_i,\varepsilon,a_i,\delta,a_3,\alpha_3,u_1.\]
\end{linenomath*}
Note that $V(D_1)\cap V(D_2)=V(\alpha_3)\cup u_2$ and that $H\del V(D_i)$ is connected for $i=1,2$. By Lemma \ref{starbucks}, $x\in V(\alpha_3)-u_1$. Now, $v_1,\alpha_1,a_1,\delta,a_3,\alpha_3,x,\varphi,v_1$ avoids a cycle of the form $u_1,u_2,\beta_2,\varepsilon,a_2,\alpha_2,v_2,L(S),u_1$, a contradiction.
\end{rotproof}

\begin{rot}\label{ugly-3}
$u_1$ and $u_2$ have no leaf of $R(S)$ as common neighbor.
\end{rot}
\begin{rotproof}
Suppose that $l$ is a leaf contradicting \ref{ugly-3}. By \ref{ugly-1}, $v_1$ and $v_2$ have neighbors in $R(S)-l$. Then, a cycle of the form $v_1,R(S)-l,v_2,L(S),v_1$ avoids the cycle $u_1,u_2,l,u_1$, a contradiction.
\end{rotproof}

\begin{rot}\label{ugly-4}
Let $l$ be a leaf of $R(S)$. Then, $u_2l\notin E(G)$, $u_1l\in E(G)$ and there is an unique index $i\in\{1,2\}$ such that $lv_i\in E(G)$.
\end{rot}
\begin{rotproof}
First let us prove that there is no leaf $l$ of $R(S)$ such that $lv_1,lv_2\in E(G)$. Suppose that $l$ is such a leaf. If $u_1$ has a neighbor in $V(R(S))-l$, then by \ref{ugly-3}, so does $u_2$ and, therefore, cycles of the form $u_1,u_2,R(S)\del l,u_1$ and $v_1,l,v_2,L(S),v_1$ avoid each other, a contradiction. Hence, $l$ is the unique neighbor of $u_1$ in $R(S)$. This implies that $l$ is the unique leaf of $R(S)$ incident to both $v_1$ and $v_2$. Let $l'$ be another leaf of $R(S)$. We may assume that $l'v_1,l'u_2\in E(G)$. 
As $l$ is the unique leaf incident to both $v_1$ and $v_2$, it follows from \ref{ugly-1} that $v_2$ has a neighbor in $R(S)\del l,l'$. So, $(R(S)\del l')+v_2$ has a cycle, which avoids a cycle of the form $u_1,u_2,l',v_1,L(S),u_1$, a contradiction. So, $v_1$ and $v_2$ have no leaf as common neighbor in $R(S)$.

By \ref{ugly-1}, this implies that each leaf of $R(S)$ is adjacent to an unique element of $\{u_1,u_2\}$ and an unique element of $\{v_1,v_2\}$. Let us prove that no leaf is adjacent to $u_2$. Suppose that some leaf $l$ is adjacent to $u_2$ and, say, $v_1$. By \ref{ugly-1}, $v_2$ has at least two neighbors in $R(S)\del l$ and cycles of the form $v_2,R(S)-l,v_2$ and $u_1,u_2,l,v_1,L(S),u_1$ avoid each other, a contradiction. This implies \ref{ugly-4}.
\end{rotproof}

\begin{rot}\label{ugly-5}
$R(S)$ has exactly two leaves.
\end{rot}
\begin{rotproof}
Suppose the contrary. Then, $R(S)$ has a vertex $w$ with $d_{R(S)}(w)\ge 3$. Moreover, $R(S)\del w$ has different connected components $K_1$, $K_2$ and $K_3$, each one containing a leaf of $R(S)$. Let $t\in\{1,2,3\}$. By \ref{ugly-4}, $K_t$ has a neighbor of $u_1$ and a neighbor of some $w_t\in\{v_1,v_2\}$. As $u_2$ has a neighbor in $R(S)-w$, we may assume that $u_2$ has a neighbor in $K_1$ and, therefore, $K_1+\{u_1,u_2\}$ has a cycle $C$ containing $e$. Now, there is a cycle of $(L(S)\cup K_2\cup K_3)+\{w,v_1,v_2\}$ avoiding $C$, a contradiction.
\end{rotproof}

We may write $R(S)$ as a path $w_1,\dots, w_n$. By \ref{ugly-1} and \ref{ugly-4} there are $1<a<b<n$ such that $w_a,w_b\in N_G(u_2)$. Also, by \ref{ugly-1} and \ref{ugly-4}, there are edges from $H:=L(S)+\{v_1,v_2\}$ to $w_1$, $w_n$ and a vertex $w_d$ with $1<d<n$. If $d<b$, then a cycle of the form $w_1,\dots,w_d,H,w_1$ avoids $u_1,u_2,w_b,\dots,w_n,u_1$, a contradiction. Thus $d\ge b$. But, symmetrically, we have $d\le a$, a contradiction.
\end{proof}

\section{Rope Bridges}\label{sec-rope bridges}
\renewcommand{\S}{\mathcal{S}}
In this section we describe structures that may appear when $\min\{d_G(u_1),d_G(u_2)\}=3$ or when $|S|=2$ and we may pick $S$ such that $S^+$ is two-sided. Those are the last characterizations that we need to prove Theorem \ref{main}; the theorem is proved in the end of the section as a direct consequence of Theorem \ref{main-cor}.

A graph $R$ with distinct vertices $u,x,y$ is an \defin{$(u,x,y)$-rope bridge} with \defin{ropes} $\rho_x$ and $\rho_y$ and \defin{family of steps} $\S$, if the following assertions hold.
\begin{enumerate}
\item [(RB0)] $d_R(v)\ge 3$ for all $v\in V(R)-\{u,x,y\}$.
\item [(RB1)] $\rho_x$ and $\rho_y$ are paths from $u$ to $x$ and $y$ respectively with $V(\rho_y)\cap V(\rho_x)=\{u\}$.
\item [(RB2)] $\S$ is a family of internally disjoint $(V(\rho_x)-u,V(\rho_y)-u)$-paths (whose members we call \defin{steps}). We denote by $x_\alpha$ and $y_\alpha$ the extremities of the step $\alpha$ in $\rho_x$ and $\rho_y$ respectively. We say that a step $\alpha$ \defin{crosses} a step $\beta$ if for some $\{s,t\}=\{x,y\}$, $s_\alpha$ and $s_\beta$ are distinct and appear in this order in $\rho_s$, while $t_\beta$ and $t_\alpha$ are distinct and appear in this order in $\rho_t$.
\item [(RB3)] Each step crosses at most one other step.
\item [(RB4)] Each vertex in $V(R)-V(\rho_y\cup\rho_x)$ is in some step.
\item [(RB5)] Each edge not in a member of $\S\cup\{\rho_x,\rho_y\}$ is incident to $u$.
\item [(RB6)] Let $z\in\{x,y\}$ and $v\in \rho_z$. Suppose that two steps have extremities in $\int(u,\rho_z,v)$. Then, $uv\notin E(R)$ and each step with extremity in $v$ has no inner vertices.
\end{enumerate}

\begin{lemma}\label{smaller rope-bridge}
Suppose that $R$ is an $(u,x,y)$-rope bridge with ropes $\rho_x$ and $\rho_y$ and that $w\in \int(\rho_y)$. Let $w_1$ be the vertex that follows $w$ in $\rho_y$, $\rho:=w_1,\rho_y,y$ and $\sigma_1,\dots,\sigma_k$ be the steps meeting $\rho$. Then, the graph $R'$ obtained from \\ $R\del (\rho\cup\int(\sigma_1),\dots,\int(\sigma_k))$ by suppressing the degree-two vertices in $\int(\rho_x)$ is an $(u,x,w)$-rope bridge.
\end{lemma}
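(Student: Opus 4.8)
The plan is to display the rope-bridge data for $R'$ and then to verify axioms (RB0)--(RB6) one at a time, each following by inheritance from $R$. Put $R_0:=R\del(\rho\cup\int(\sigma_1)\cup\cdots\cup\int(\sigma_k))$, so that $R'$ is $R_0$ with the prescribed suppressions performed. I take as ropes of $R'$ the path $\rho_w:=u,\rho_y,w$ and the path $\rho_x'$ obtained from $\rho_x$ by those suppressions, and as family of steps $\S':=\S\del\{\sigma_1,\dots,\sigma_k\}$, each $\alpha\in\S'$ keeping its extremities. Two observations carry the proof. First, passing from $R$ to $R_0$ deletes no vertex or edge of $\rho_x$, of $\rho_w$, or of any $\alpha\in\S'$: a deleted step-interior lies in some $\sigma_i$, a deleted rope-vertex lies on $\rho$, and every remaining edge of $R$ is incident to $u$ by (RB5), hence is removed only together with a deleted endvertex. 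Second, a vertex $p\in\int(\rho_x)$ is suppressed only if it retains in $R_0$ just its two $\rho_x$-edges; in particular the $x$-extremity of a surviving step still carries its step-edge and so is not suppressed, and each suppression merely collapses a sub-path of $\rho_x$, leaving $\rho_y$ and all steps untouched. Consequently the linear orders inherited by the surviving vertices on $\rho_x'$ and on $\rho_w$ agree with their orders on $\rho_x$ and $\rho_y$.

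Now to the axioms. (RB1) holds since $V(\rho_x')\cont V(\rho_x)$, $V(\rho_w)\cont V(\rho_y)$, and $V(\rho_x)\cap V(\rho_y)=\{u\}$. For (RB0), note first that $w$ is one of the three exceptional vertices of $R'$ and so is under no degree constraint, which absorbs the loss of the edge $ww_1$; a vertex of $R'$ other than $u,x,w$ belongs to $V(R)-\{u,x,y\}$, and it keeps degree at least $3$, because if it lies off $\rho_x$ it retains all of its edges by the first observation, while if it lies on $\rho_x$ then any degree-$2$ survivor has been suppressed. For (RB2)--(RB3), $\S'$ is still a family of internally disjoint paths; the $\rho_y$-extremity of $\alpha\in\S'$ does not lie on $\rho$ (else $\alpha$ would be one of the $\sigma_i$) and so lies on $\rho_w$; and since the two rope-orders are inherited, the crossing relation among the members of $\S'$ in $R'$ is the restriction of the one in $R$, so (RB3) for $R$ yields (RB3) for $R'$. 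For (RB4), a vertex of $R'$ off $\rho_x'\cup\rho_w$ lies off $\rho_x\cup\rho_y$ (a $\rho_x$-vertex missing from $\rho_x'$ was suppressed, hence deleted; a $\rho_y$-vertex off $\rho_w$ lies on $\rho$), so by (RB4) for $R$ it lies on a step, which cannot be a $\sigma_i$ since the interiors of the $\sigma_i$ are gone. For (RB5), an edge of $R'$ lying on no member of $\S'\cup\{\rho_x',\rho_w\}$ is either created by a suppression -- and then lies on $\rho_x'$ -- or is an edge of $R$ lying on no member of $\S\cup\{\rho_x,\rho_y\}$, hence incident to $u$.

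Finally I would check (RB6), which, together with tracking the suppressions, I expect to be the only delicate point. Fix $z\in\{x,w\}$ and a vertex $v$ on the corresponding new rope, and assume two steps of $\S'$ have their extremities in $\int(u,\rho_x',v)$ when $z=x$, respectively in $\int(u,\rho_w,v)$ when $z=w$. Since $\int(u,\rho_w,v)=\int(u,\rho_y,v)$ and $\int(u,\rho_x',v)\cont\int(u,\rho_x,v)$, those same two steps, viewed in $\S$, have extremities in the corresponding set for $R$, so (RB6) for $R$ gives $uv\notin E(R)$, whence $uv\notin E(R_0)$, and shows that every step of $\S$ -- hence every step of $\S'$ -- with extremity $v$ has no inner vertex. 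It remains only to rule out that a suppression manufactured the edge $uv$ in $R'$: when $z=w$ the vertex $v$ lies on $\rho_y$, which no suppression touches; and when $z=x$, such a new edge would demand a degree-$2$ vertex $p\in\int(\rho_x)$ with neighbours $u$ and $v$, forcing $v$ to be the first vertex of $\rho_x'$ after $u$ and so $\int(u,\rho_x',v)=\emp$, contrary to the presence of the two steps. This verifies (RB0)--(RB6) for $R'$ with the stated data, so $R'$ is an $(u,x,w)$-rope bridge. The main obstacle, as indicated, is organisational: pinning down exactly which vertices of $\rho_x$ drop to degree two, confirming that this disturbs neither a step nor a crossing, and getting the two prefix inclusions pointed the right way in (RB6); once that is in place, the rest is routine inheritance.
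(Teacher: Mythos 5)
Your proof is correct and follows essentially the same route as the paper: take $\rho_x$ with the suppressed degree-two vertices contracted and $\rho_y\del V(\rho)$ as the new ropes, take $\S-\{\sigma_1,\dots,\sigma_k\}$ as the new family of steps, and check that (RB0)--(RB6) are inherited from $R$. The paper merely asserts that this verification is easy, whereas you carry it out in full, including the one genuinely delicate point that a suppression cannot manufacture an edge $uv$ violating (RB6).
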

\begin{proof}
Define $W=V(\rho\cup\int(\sigma_1)\cup\cdots\cup\int(\sigma_k))$. For some $F\cont E(\rho_x)$, up to isomorphisms, we have $R'=R\del W/F$. Consider $\rho'_x:=\rho_x/F$ and $\rho_w:=\rho_y\del V(\rho)$ as ropes for $R'$. Also consider as family of steps $\S'=\S-\{\sigma_1,\dots,\sigma_k\}$. Now, its is easy to verify that $R'$  with such ropes and steps, inherits each one of the properties (RB0)-(RB6) from $R$.
\end{proof}

\begin{lemma}\label{rope-bridge criterion}
Let $R$ be a connected graph with vertices $u,x,y$ and paths $\rho_x$ and $\rho_y$ from $u$ to $x$ and $y$ respectively, satisfying $V(\rho_y)\cap V(\rho_x)=\{u\}$. Suppose that $d_R(v)\ge 3$ for all $v\in V(R)-\{u,x,y\}$. Then, the following assertions are equivalent:
\begin{enumerate}
 \item [(a)] $R$ is an $(u,x,y)$-rope-bridge with ropes $\rho_x$ and $\rho_y$.
 \item [(b)] If $z\in \{x,y\}$ and $C$ is a cycle of $R\del \{u,z\}$, then $R$ has no $(u,z)$-path disjoint from $C$.
\end{enumerate}
\end{lemma}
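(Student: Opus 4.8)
The plan is to prove the two implications separately, using throughout the structural consequence of (RB5): after deleting $u$ from a rope bridge, every edge lies on one of the ropes or on a step, so every cycle and every path in $R\setminus u$ is a concatenation of arcs of $\rho_x$, arcs of $\rho_y$, and steps. In particular a cycle of $R\setminus u$ must use at least two steps, since the two ropes meet only at $u$.

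For (a)$\Rightarrow$(b) I would argue by contradiction. Fix $z$, say $z=y$, a cycle $C$ of $R\setminus\{u,z\}$, and a $(u,z)$-path $P$ with $V(P)\cap V(C)=\emptyset$; it suffices to show that $V(C)$ separates $u$ from $z$ in $R$. Let $q$ be the vertex of $V(C)\cap V(\rho_y)$ closest to $y$ along $\rho_y$ (this set is nonempty because $C$ uses $\ge 2$ steps), and let $R_y$ consist of the vertices of $\rho_y$ lying strictly beyond $q$, so $y\in R_y$ because $C$ avoids $y$. On $\rho_y$ the set $R_y$ is cut off from $u$ by $q$, and (RB6) forbids $u$ from being adjacent to any vertex of $R_y$, since at least two steps of $C$ have extremities on the segment of $\rho_y$ from $u$ to that vertex. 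Finally one analyses steps $\mu$ that are disjoint from $C$ and have their $\rho_y$-extremity in $R_y$: by (RB2) the $\rho_y$-extremity of such a $\mu$ lies beyond every $\rho_y$-extremity of a step of $C$; if $\mu$ crossed exactly one step of $C$ one finds that its $\rho_x$-extremity would be forced into an arc of $C$ on $\rho_x$, a contradiction, so by (RB3) $\mu$ crosses no step of $C$; then its $\rho_x$-extremity lies beyond all of $C\cap\rho_x$, and (RB6) applied on $\rho_x$ both forbids $u$ to be adjacent to that extremity and forces $\mu$ to be a single edge. Organising these facts as a descent on the first vertex of $P$ lying beyond $C$ on $\rho_y$ (or, symmetrically, on $\rho_x$) shows that $u$ and $y$ lie in different components of $R\setminus V(C)$, the desired contradiction; the case $z=x$ is symmetric.

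For (b)$\Rightarrow$(a) I would take the candidate family of steps $\S$ to be the nontrivial bridges of $\rho_x\cup\rho_y$ in $R\setminus u$, i.e. the maximal paths of the graph obtained from $R\setminus u$ by deleting the edges of $\rho_x$ and $\rho_y$. Properties (RB0) and (RB1) are hypotheses, so it remains to deduce (RB2)--(RB6) from (b), and in each case a failure produces, for some $z\in\{x,y\}$, a cycle of $R\setminus\{u,z\}$ together with a disjoint $(u,z)$-path, contradicting (b). For instance, a bridge with both attachments on $\rho_x$ gives a cycle inside that bridge together with an arc of $\rho_x$, and $\rho_y$ serves as the disjoint $(u,y)$-path; a bridge that is not an induced path, or three attachments on a rope, reduces to this by routing the cycle through a branch vertex; a step crossing two other steps, or a violation of (RB6), likewise yields an explicit forbidden configuration after some routing.

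I expect the main obstacle to be the case of (a)$\Rightarrow$(b) in which $C$ uses more than two steps: then $C\cap\rho_x$ and $C\cap\rho_y$ break into several arcs, and one must control simultaneously the crossing bound (RB3) and the chord structure at $u$ (RB6) for all of these arcs, which is what makes the descent delicate. One could instead first try to reduce to the subcase in which $C$ does not reach the endpoint side of $\rho_y$, using Lemma \ref{smaller rope-bridge}, but the bookkeeping appears comparable, so I would carry out the direct analysis above.
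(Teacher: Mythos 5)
Your (b)$\Rightarrow$(a) direction is essentially the paper's argument: the paper also builds the candidate family $\mathcal{S}$ explicitly (as $(V(\rho_x),V(\rho_y))$-paths through the off-rope vertices, plus the rope-to-rope edges, rather than as bridges of $\rho_x\cup\rho_y$) and then checks (RB2)--(RB6) one at a time, converting each violation into a cycle of $R\del\{u,z\}$ plus a disjoint $(u,z)$-path. The one piece your phrasing glosses over is the same piece the paper has to work for: showing that each off-rope vertex really lies on a rope-to-rope path avoiding $u$ (equivalently, that your bridges are paths with exactly one attachment on each rope); the case of a vertex lying on no cycle of $R\del u$ needs the minimum-degree hypothesis to rule out a pendant cycle-free piece.

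Your (a)$\Rightarrow$(b) direction takes a genuinely different route. The paper proceeds by minimal counterexample: taking a shortest offending $(u,z)$-path $\gamma$, it shows the last departure of $\gamma$ from $\rho_x$ must be along a short step from $\rho_y$, and then invokes Lemma \ref{smaller rope-bridge} to pass to a smaller rope bridge still violating (b). You instead show directly that $V(C)$ separates $u$ from $z$, and the facts you isolate are correct and sufficient; moreover, the ``delicate descent'' you worry about in the many-arc case is not actually needed. Let $q_x$ and $q_y$ be the last vertices of $C$ on $\rho_x$ and $\rho_y$; every step of $C$ has its extremities at or before $q_x$ and $q_y$ (the endpoints of the maximal arcs of $C$ on a rope are step-extremities, since by (RB5) the only non-rope edges of $C$ lie in steps and $u\notin V(C)$ kills the chords at inner step vertices), and since $C$ contains at least two distinct steps, (RB6) applies uniformly at every vertex strictly beyond $q_x$ or $q_y$, regardless of how many arcs $C$ has. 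The only point needing a real argument is the one you flag: a step $\mu$ disjoint from $C$ with $y_\mu$ beyond $q_y$ crosses a step $\sigma$ of $C$ exactly when $x_\mu$ precedes $x_\sigma$, so by (RB3) at most one $\sigma$ has $x_\sigma$ after $x_\mu$; if exactly one does, the second $C$-edge at $x_\sigma$ cannot be another step (that step would also be crossed) nor the $\rho_x$-edge towards $x$ (the arc would end at a further step-extremity, again crossed), so $C$ contains the whole arc of $\rho_x$ from the preceding step-extremity of $C$ up to $x_\sigma$, which traps $x_\mu$ in $V(C)$ --- a contradiction. With that, the component of $y$ in $R\setminus V(C)$ is contained in the union of the two rope-segments beyond $q_x$ and $q_y$ together with the (short) steps between them, and (RB6) says $u$ has no neighbour there, so no descent is required. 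In short: both implications are sound; the second is a more direct, self-contained alternative to the paper's induction, at the cost of the local analysis above, while the paper's reduction via Lemma \ref{smaller rope-bridge} trades that analysis for the shortest-path and minimal-vertex-count bookkeeping.
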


\begin{proof}
Conditions (RB0) and (RB1) are given in the hypothesis. So, we have to prove that (b) is equivalent to (RB2)-(RB6) for some family of steps $\S$, which we will define ahead. Suppose that (b) holds.  

First we prove that each $v\in V(R)-(V(\rho_x)\cup(V\rho_y))$ is in a $(V(\rho_x),V(\rho_y))$-path $\sigma_v$ avoiding $u$. If $v$ is in a cycle of $R\del u$, then, by (b), this cycle must meet $\rho_x$ and $\rho_y$ and this implies the existence of $\sigma_v$. So, we may assume that $v$ is in no cycle of $R\del u$. As $d_{R\del u}(v)\ge 2$, this implies that $R\del u,v$ has different connected components $K_1$ and $K_2$ each one with an unique neighbor of $v$. If $\rho_x$ and $\rho_y$ are each one in a different component of $\{K_1,K_2\}$, then the existence of $\sigma_v$ is straightforward. So, we may assume that $K_1$ avoids $\rho_x$ and $\rho_y$. Let $w_1$ be the unique neighbor of $v$ in $K_1$. As $d_R(w_1)\ge 3$, $K_1$ has more than one vertex. But, for all $w\in V(K_1)-w_1$, $d_{R\del u,v}(w)\ge 2$. This implies that $K_1$ has a cycle, which must avoid $\rho_x$ and $\rho_y$, contradicting (b). This proves the existence of $\sigma_v$. 

Now, we define to be the elements of $\S$ the paths $\sigma_v$ with $v\in V(R)-V(\rho_x\cup\rho_y)$ and the paths of length one of the form $x',y'$ with $x'\in \rho_x-u$ and $y'\in \rho_y-u$. We will call {\it steps} the members of $\S$. For each step $\alpha$ we denote by $x_\alpha$ and $y_\alpha$ the endvertices of $\alpha$ in $\rho_x$ and $\rho_y$ respectively.

To establish (RB2) we shall prove that the steps are internally disjoint. Indeed, suppose for a contradiction that a step $\alpha$ intersects a step $\beta$ in an inner vertex $v$. As $\alpha\neq\beta$, we may assume that $x_{\alpha},\alpha,v\neq x_{\beta},\beta,v$. This implies that, $x_{\alpha},\alpha,v,\beta,x_{\beta},\rho_x,x_\alpha$ contains a cycle avoiding $\rho_y$, a contradiction. Thus, the steps are internally disjoint and (RB2) holds. 

Suppose that (RB3) does not hold. So, there is a step $\alpha$ crossing different steps $\beta$ and $\gamma$. We may assume that $x_\alpha$, $x_\beta$ and $x_\gamma$ appear in this order in $\rho_x$. Then, the cycle $x_\beta,\rho_x,x_\gamma,\gamma,y_\gamma,\rho_y,y_\beta,\beta,x_\beta$ avoids the path $u,\rho_x,x_\alpha,\alpha,y_\alpha,\rho_y,y$, a contradiction to (b). So, (RB3) holds. 

By construction, each vertex of $V(R)-V(\rho_x\cup\rho_y)$ is in a step and we have (RB4).

To prove (RB5) suppose that $f=vw$ is an edge of $R\del u$ not in $\rho_x\cup\rho_y$. If $\{v,w\}$ meet both $\rho_x$ and $\rho_y$ then $f$ is in a step. If $\{v,w\}\subseteq V(\rho_x)$, then there is a cycle in $\rho_x+f$ avoiding $\rho_y$. So, $\{v,w\}\ncont V(\rho_x)$ and, analogously, $\{v,w\}\ncont V(\rho_y)$. We may assume that $v\notin \rho_x\cup \rho_y$. By (RB4), $v$ is in the interior of a step $\sigma$. If $w$ is in $\sigma$ then either $f\in E(\sigma)$ or $\sigma+f$ has a cycle avoiding one of $\rho_x$ or $\rho _y$. If $w$ is in $\rho_x$ or $\rho_y$, say the former, then $w,v,\sigma,x_\sigma,\rho_x,w$ is a cycle avoiding $\rho_y$. Thus, $w$ is in the interior of a step $\alpha\neq \sigma$. Now $w,v,\sigma,x_\sigma,\rho_x,x_\alpha,\alpha,w$ is a cycle avoiding $\rho_y$, a contradiction. So, (RB5) holds.

Now, we prove (RB6). Let $z\in\{x,y\}$ and $v\in \rho_z$ and suppose that two steps $\alpha$ and $\beta$ have extremities in $\int(u,\rho_z,v)$. Say $z=x$. If $uv\in E(R)$, then the  path $u,v,\rho_x,x$ avoids the cycle $C:=x_\alpha,\alpha,y_\alpha,\rho_y,y_\beta,\beta,x_\beta,\rho_x,x_\alpha$, a contradiction. So, $uv\notin E(R)$. If a step $\sigma$ with extremity in $v$ has a inner vertex $w$, then, as $d_R(w)\ge 3$, there is an edge incident to $w$ not in $\sigma$, and by (RB5), this edge is $uw$. Now $u,w,\sigma,v,\rho_x,x$ avoids $C$, a contradiction. So, (RB6) holds and (b) implies (a).

Suppose that $R$ is a graph for which (a) holds but (b) does not hold. Choose $R$ with $|V(R)|$ as small as possible. Consider $z\in \{x,y\}$ such that there is a cycle $C$ of $R\del \{u,z\}$ and a $(u,z)$-path $\gamma$ disjoint from $C$ as short as possible. Say that $z=x$. Let $v$ be the vertex of $\gamma$ such that $v,\gamma,x$ is contained in $\rho_x$ and $v,\gamma,x$ is as long as possible. Let $w$ be the vertex preceding $v$ in $\gamma$. Note that $C$ meets at least two steps, whose endvertices precedes $v$ in $\rho_x$. By (RB6), $w\neq u$ and all steps arriving in $v$ have no internal vertices.  If the edge $wv$ is in $\rho_x$, then $w,v,\rho_x,x$ violates the maximality of $v,\rho_x,x$. So, as $w\neq u$, $wv$ is in a step, which may not contain internal vertices. Hence $w\in V(\rho_y)-u$ and $v,w$ is a step. If $w=y$, then $u,\gamma,w$ violates the minimalty of $\gamma$ (for $z=y$), a contradiction. Let $t$ be the vertex following $w$ in $\rho_y$. If $C$ meets $\eta:=t,\rho_y,y$, then, at least two steps contained in $C$ have endvertices in $\eta$, but these steps also have endvertices in $\int(u,\rho_x,v)$ and, therefore, cross $v,w$, contradicting (RB3). So, $C$ does not meet $\eta$. Let $R'$ be obtained from $R$ by deleting $V(\eta)$ and all inner vertices of steps with endpoints in $\eta$ and, then, suppressing the degree-$2$ vertices. By Lemma \ref{smaller rope-bridge}, $R'$ is an $(u,x,w)$-rope bridge with less vertices than $R$. But, the path induced by $u,\gamma,w$ and the cycle induced by $C$ in $R'$ contradict (b). This is a contradiction to the minimality of $|V(R)|$.
\end{proof}

We say that a step in a rope-bridge is \defin{short} if it has no inner vertices and \defin{long} otherwise. A vertex $v$ in a rope $rho$ of an $(u,x,y)$-rope bridge is \defin{clean} if $uv\notin E(G)$ provided $v$ is not the vertex following $u$ in $\rho$ and all steps arriving in $v$ are short. Note that (RB6) says that for the rope $\rho$ containing $v$, $v$ is clean if two steps arrive in $\int(u,\rho,v)$.

The following lemma follows directly from (RB0)-(RB6)

\begin{lemma}\label{constructions}
Let $R$ be an $(u,x,y)$-rope bridge with ropes $\rho_x$ and $\rho_y$. Write, for $z\in\{x,y\}$, $z_0:=u$ and $\rho_z:=z_0,z_1,\dots,z_{n(z)}$.
\begin{enumerate}

 \item [(a)] Suppose that, for $m\ge 2$ and $t\ge 1$, $\alpha_1,\dots,\alpha_m$ are the steps arriving in $x_t$. Let $y_{k(i)}:=y_{\alpha_i}$ and suppose $k(1)\le\cdots\le k(m)$. Let $R_1$ be the graph obtained from $R$ by splitting $x_t$ into vertices $v$ and $w$ in such a way that:
 \begin{itemize}
  \item $x_{t-1}v\in E(R_1)$,
  \item either $x_t\neq x$ and $x_{t+1}w\in E(R_1)$ or $x_t=x$ and we consider $w=x$,
  \item for some $1\le l<m$, the paths corresponding to $\alpha_1,\dots,\alpha_l$ arrive in $v$ and the ones corresponding to $\alpha_{l+1},\dots,\alpha_m$ arrive in $w$ in $R_1$ and
  \item $uw\notin E(R_1)$ and $uv\in E(R_1)$ if and only if $ux_t\in E(R_1)$.
 \end{itemize}
Suppose that all steps arriving in $x_{t+1},\dots,x_{n(x)}$ are short. Then, $R_1$ is an $(u,x,y)$-rope bridge if one of the following assertions hold:
 \begin{enumerate}
  \item [(a1)] $\alpha_{l+1},\dots,\alpha_m$ are short, or
  \item [(a2)] $t=l=1$.
 \end{enumerate}
 Moreover, a similar construction with $x$ and $y$ playing swapped roles also results in an $(u,x,y)$ rope-bridge.
\item [(b)] Suppose that $\alpha$ is a step with endvertices in $x_a$ and $y_b$, $x_{a},\dots,x_{n(x)},y_{b},\dots, y_{n(y)}$ are all clean, $\alpha$ crosses no other step, and no other step has an endvertex in $x_a$ or $y_b$. Let $R_2$ be the graph obtained from $R$ by deleting the edge of $\alpha$, splitting $x_a$ into vertices $v_1$ and $v_2$ and $y_b$ into $w_1$ and $w_2$, then, adding the edges $v_1w_2$ and $v_2w_1$ as steps in such a way that:
\begin{itemize}
 \item $x_{a-1}v_1,y_{b-1}w_1\in E(R_2)$,
 \item either $x_a\neq x$ and $x_{a+1}v_2\in E(R_2)$ or $x_a=x$ and we consider $v_2=x$ and
 \item either $y_b\neq y$ and $y_{b+1}w_2\in E(R_2)$ or $y_b=y$ and we consider $w_2=y$.
\end{itemize}
Then $R_2$ is an $(u,x,y)$-rope bridge. 
\end{enumerate}
\end{lemma}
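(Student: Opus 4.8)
The plan is to prove both parts by directly verifying that the constructed graph satisfies (RB0)--(RB6) with respect to an explicitly specified pair of ropes and family of steps; since (RB1), (RB2), (RB4) and (RB5) transfer to the modified graph essentially unchanged, the work concentrates on the degree condition (RB0) at the newly created vertices, the crossing condition (RB3), and the cleanliness condition (RB6) at the new rope vertices.

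For part (a) I would take as ropes of $R_1$ the path $\rho_x'$ obtained from $\rho_x$ by replacing $x_t$ with the two-vertex subpath $v,w$, so that $\rho_x'=u,x_1,\dots,x_{t-1},v,w,x_{t+1},\dots,x_{n(x)}$ (with $w=x$ when $x_t=x$), together with $\rho_y'=\rho_y$, and as family of steps the copy of $\mathcal{S}$ in which the $\rho_x$-endvertex of $\alpha_1,\dots,\alpha_l$ is now $v$, that of $\alpha_{l+1},\dots,\alpha_m$ is now $w$, and every other step is unchanged. Then (RB1), (RB2), (RB4) and (RB5) are immediate. For (RB0) one counts the degrees of the new vertices: $v$ is joined to $x_{t-1}$, to the $\ge 1$ steps $\alpha_1,\dots,\alpha_l$, and to $u$ exactly when $ux_t\in E(R)$; $w$ is joined to $x_{t+1}$ (when $x_t\neq x$) and to the $\ge 1$ steps $\alpha_{l+1},\dots,\alpha_m$; the count uses $m\ge 2$ together with the sub-hypothesis in force. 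For (RB3) one notes that a step $\beta\notin\{\alpha_1,\dots,\alpha_m\}$ crosses a step of $R_1$ iff the corresponding step of $R$ did, so the only possibly new crossings are between an $\alpha_i$ with $i\le l$ and an $\alpha_j$ with $j>l$; but $v$ precedes $w$ on $\rho_x'$, while $k(1)\le\dots\le k(m)$ forces $y_{\alpha_i}$ to lie at or before $y_{\alpha_j}$ on $\rho_y$, so $\alpha_i$ and $\alpha_j$ do not cross and (RB3) is preserved. For (RB6) the only rope vertices whose initial segment changed are $v$ and $w$. For $v$, the steps with an endvertex in $\int(u,\rho_x',v)$ are exactly those with an endvertex in $\int(u,\rho_x,x_t)$, and the steps ending at $v$ lie among those ending at $x_t$, so (RB6) at $v$ follows from (RB6) at $x_t$ in $R$ together with the rule $uv\in E(R_1)\Leftrightarrow ux_t\in E(R)$. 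For $w$, the segment $\int(u,\rho_x',w)$ consists of $v$ and the vertices before $x_t$: if it carries endvertices of at least two steps we are in case (a1), and then $\alpha_{l+1},\dots,\alpha_m$ are short, $uw\notin E(R_1)$, and the steps past $x_t$ are short, which is exactly the conclusion of (RB6) at $w$; otherwise $t=l=1$, i.e., case (a2), and the hypothesis of (RB6) at $w$ is vacuous. The construction with $x$ and $y$ interchanged is symmetric.

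For part (b) I would first observe that $\alpha$ is short, because its $\rho_x$-endvertex $x_a$ is clean, so $\alpha$ is the single edge $x_ay_b$. Take as ropes $\rho_x'$ (replace $x_a$ by the subpath $v_1,v_2$) and $\rho_y'$ (replace $y_b$ by $w_1,w_2$), and as family of steps $\mathcal{S}$ with $\alpha$ deleted and the two short steps $v_1w_2$ and $v_2w_1$ adjoined; no other step is affected, since none ends at $x_a$ or $y_b$. Again (RB1), (RB2), (RB4) and (RB5) are immediate and (RB0) is a degree count for $v_1,v_2,w_1,w_2$. For (RB3) the defining condition in (RB2) shows that $v_1w_2$ and $v_2w_1$ cross one another (their $\rho_x'$-endvertices occur in the order $v_1,v_2$, their $\rho_y'$-endvertices in the order $w_1,w_2$), and neither crosses any further step, for otherwise that step, which has both endvertices outside $\{v_1,v_2\}$ and outside $\{w_1,w_2\}$, would already have crossed $\alpha$ in $R$, contrary to hypothesis; all crossings among the old steps are unchanged, so (RB3) holds. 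For (RB6): since $x_a,\dots,x_{n(x)}$ and $y_b,\dots,y_{n(y)}$ are clean in $R$ and the only change in that region is the replacement of the short step $\alpha$ by the two short steps $v_1w_2,v_2w_1$ (with the adjacency of $v_1,v_2,w_1,w_2$ to $u$ inherited from $x_a,y_b$), each of $v_1,v_2,w_1,w_2$ and each later rope vertex stays clean, so (RB6) holds in $R_2$.

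I expect the real work to be the bookkeeping for (RB3) and (RB6) at the vertices created by splitting. In part (a) the point to get right is that splitting $\alpha_1,\dots,\alpha_m$ into the $v$-block $\{1,\dots,l\}$ and the $w$-block $\{l+1,\dots,m\}$ compatibly with $k(1)\le\dots\le k(m)$ is precisely what blocks new crossings, and that the dichotomy (a1) versus (a2) matches the dichotomy ``the initial segment of $w$ meets at least two steps'' versus ``it meets at most one'' that controls (RB6) at $w$. In part (b) the point is that the cleanliness hypotheses are stable under the surgery, so the two new crossing steps cannot break (RB6). Everything else is a line-by-line translation of (RB0)--(RB6) from $R$ to the modified graph.
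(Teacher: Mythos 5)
Your verification is correct and follows the only natural route here: the paper itself gives no argument for this lemma (it merely asserts that it ``follows directly from (RB0)--(RB6)''), and your proposal is exactly the direct axiom-by-axiom check that assertion presupposes. In particular you correctly isolate the two genuinely nontrivial points --- that the ordering $k(1)\le\cdots\le k(m)$ blocks new crossings between the $v$-block and the $w$-block in (a), and that the cleanness hypotheses in (b) absorb the fact that the pair $v_1w_2,v_2w_1$ may newly trigger the hypothesis of (RB6) at later rope vertices --- so nothing further is needed.
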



\begin{lemma}\label{rope bridge is a minor}
If $R$ is an $(u,x,y)$-rope bridge then, up to the labels of elements other than $u$, $x$ and $y$, $R$ is a minor of a graph as in Figures \ref{fig-side1} or \ref{fig-side2} (paths that contracts to the ropes are drawn in thick lines).
\end{lemma}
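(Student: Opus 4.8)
The plan is to induct on the number of \emph{long} steps in $R$, together with a secondary induction on $|V(R)|$, reducing every $(u,x,y)$-rope bridge to one of the two canonical graphs of Figures \ref{fig-side1} and \ref{fig-side2} by a sequence of the local moves made available by Lemma \ref{constructions}, each move producing (the reverse of) a contraction or deletion. First I would set up notation as in Lemma \ref{constructions}, writing $\rho_x=x_0,x_1,\dots,x_{n(x)}$ and $\rho_y=y_0,y_1,\dots,y_{n(y)}$ with $x_0=y_0=u$, and recalling that by (RB4) every vertex off the ropes lies on a step, so $R$ is determined by the ropes together with $\S$. The base case is a rope bridge in which every step is short and no two steps cross; using (RB5) and (RB6) one checks directly that such a bridge is (isomorphic, up to relabelling of the non-$u$, non-$x$, non-$y$ vertices, to) a subgraph of the ``all spokes from $u$, parallel short rungs'' picture that sits inside Figure \ref{fig-side1}, hence a minor of it.

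Next I would handle \textbf{crossings}. If some step $\alpha$ crosses another step $\beta$, then by (RB3) each of $\alpha,\beta$ crosses only the other, and (RB6) forces the relevant rope-intervals beyond $x_\alpha,x_\beta$ (resp.\ $y_\alpha,y_\beta$) to be clean. This is exactly the hypothesis of Lemma \ref{constructions}(b) read backwards: the graph obtained by \emph{un-crossing} $\alpha$ and $\beta$ (identifying $v_1$ with $v_2$ and $w_1$ with $w_2$, i.e.\ contracting two edges) is again an $(u,x,y)$-rope bridge, with one fewer pair of crossing steps and no new long steps. So after finitely many contractions I may assume $R$ has no crossings, and $R$ is a minor of a crossing-free rope bridge that in turn — by the paragraph below — is a minor of the canonical graph; since the canonical graphs in the figures are drawn \emph{with} the crossings present (the ``$x_i$–$y_j$'' rungs), I would instead run this step in the forward direction: every crossing-free bridge embeds in the non-crossing canonical graph, and re-introducing a bounded number of crossings exhibits the general $R$ as a minor of the crossing version.

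The remaining, and main, work is \textbf{eliminating long steps}. Suppose $\sigma$ is a long step with endvertices $x_a\in\rho_x$, $y_b\in\rho_y$; by (RB5) every internal vertex $w$ of $\sigma$ has its unique off-$\sigma$ edge going to $u$, so $\sigma$ together with those spokes is itself a little fan at $u$. I would peel off the ``tail'' of one rope past $\sigma$ exactly as in Lemma \ref{smaller rope-bridge}: delete the portion of $\rho_y$ beyond $y_b$ and the steps meeting it, obtaining a smaller $(u,x,w')$-rope bridge for a suitable $w'$, apply induction to embed \emph{that} in a canonical graph, and then observe that the deleted tail plus $\sigma$ plus its spokes is precisely a ``doubled-spoke / short-rung'' gadget that the canonical picture in Figure \ref{fig-side2} already contains (it is there to absorb a long step); re-attaching it corresponds to uncontracting the edges of $\sigma$. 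Here I must be careful that the interface vertices $x_a$ and the endpoints of the tail-steps match up with the canonical graph's labelling — this is where (RB6)'s cleanliness conditions and Lemma \ref{constructions}(a)'s vertex-splitting moves are used to put $R$ into a normal form first (all long steps pushed toward $u$, all steps past them short), so that the inductive surgery glues cleanly. The main obstacle is precisely this bookkeeping: verifying that after each reduction the six axioms (RB0)--(RB6) still hold for the explicit choice of ropes and step-family (which is what Lemmas \ref{smaller rope-bridge} and \ref{constructions} are for) and that the minor maps compose to land in the single fixed graph of the figure rather than in an ever-growing family. Once the normal form is reached, matching it against Figures \ref{fig-side1}--\ref{fig-side2} is the kind of straightforward verification the authors elsewhere call ``easy to check,'' and I would present it as such, pointing the reader to the thick-line paths that contract to $\rho_x$ and $\rho_y$.
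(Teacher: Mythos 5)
Your toolbox is the right one, but the architecture of the induction does not close, and the step you defer as ``bookkeeping'' is in fact the entire content of the lemma. Two concrete problems. First, direction of the minor relation: un-crossing a pair of steps by contracting two edges produces a graph of which $R$ is \emph{not} a minor (it is a minor \emph{of} $R$), so proving the contracted graph embeds in the figure says nothing about $R$. You notice this and flip to the forward direction, but the forward use of Lemma \ref{constructions}(b) requires its hypotheses (the step crosses nothing, no other step shares its endvertices, all later rope vertices are clean), and you never verify that a general $R$ can be reached from a crossing-free bridge by legal applications of it. Second, and more seriously, the peel-and-reattach treatment of long steps has a genuine gap: after deleting a tail of $\rho_y$ via Lemma \ref{smaller rope-bridge} and applying induction to the smaller bridge $R'$, the induction hypothesis hands you \emph{some} minor embedding of $R'$ into a figure, with no control over where the $\rho_x$-endpoints of the deleted steps land; moreover $R'$ now has a (possibly long) step arriving at its rope endpoint $w'$, a configuration the two canonical figures do not contain, since their long steps sit only at the first or second rope vertex next to $u$. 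So the ``gadget'' cannot simply be glued back inside one of the two fixed pictures. The normal form you invoke to repair this --- all long steps pushed next to $u$, everything beyond them short --- is precisely what must be proven.

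The paper's proof is shorter and avoids both issues by running everything in one direction: apply the splitting operations of Lemma \ref{constructions} to $R$ until none can be performed (so the original is a minor of the saturated graph), and then exploit the \emph{impossibility} of further operations together with (RB6) to pin the structure down exactly. Concretely: the least index $c(z)$ beyond which every rope vertex is clean satisfies $c(z)\le 2$; each non-clean vertex beyond $z_1$ carries a unique step, which must cross another step (else operation (b) would apply); a unique step arrives at $z_1$; hence there are at most two long steps and they sit at $z_1$ or $z_2$, which is read off directly against Figures \ref{fig-side1} and \ref{fig-side2} according to whether $\alpha_x=\alpha_y$. None of this positional analysis appears in your proposal, and without it the claim that your normal form matches one of exactly two figures is unsupported. (Lemma \ref{smaller rope-bridge} is not needed here at all; in the paper it serves only the proof of Lemma \ref{rope-bridge criterion}.)
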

\begin{center}
 \begin{figure}[h]
  \begin{minipage}{6cm}
   \begin{tikzpicture}\centering[scale=3]
     \tikzstyle{node_style2} = [shape = circle,fill = black,minimum size = 0.5pt,inner sep=0.5pt]
      \tikzstyle{node_style} =[shape = circle,fill = black,minimum size = 2pt,inner sep=1pt]
      \begin{scope}
	\fill[gray!20,opacity=1] (2,0) -- (0.5,-2) -- (1.25,-2) -- cycle;
	\fill[gray!20,opacity=1] (2,0) -- (1.75,-2) -- (2.5,-2) -- cycle;
	\fill[gray!20,opacity=1] (2,0) -- (2.75,-2) -- (3.5,-2) -- cycle;
	\fill[gray!20,opacity=1] (2,0) -- (4,-1) -- (4,1);
	
	\node[node_style] (v) at (2,0) {};\node () at (1.8,0) {$u$};
	\node[node_style] (x1) at (4,1) {};\node[node_style] (x2) at (5,1) {};
	\node[node_style] (y1) at (4,-1) {};\node[node_style] (y2) at (5,-1) {};
	
	\node[node_style2] () at (0.9,-2) {};
	\node[node_style2] () at (1,-2) {};
	\node[node_style2] () at (1.1,-2) {};
	\node[node_style] (lfl) at (0.5,-2)  {};\node[node_style] (lfm) at (0.75,-2) {}; \node[node_style] (lfr) at (1.25,-2) {};
	\node[node_style2] () at (1.4,-2){};
	\node[node_style2] () at (1.5,-2){};
	\node[node_style2] () at (1.6,-2){};
	\node[node_style2] () at (2.15,-2){};
	\node[node_style2] () at (2.25,-2){};
	\node[node_style2] () at (2.35,-2){};
	\node[node_style] (mfl) at (1.75,-2) {};\node[node_style] (mfm) at (2,-2) {}; \node[node_style] (mfr) at (2.5,-2) {};
	\node[node_style2] () at (3.15,-2){};
	\node[node_style2] () at (3.25,-2){};
	\node[node_style2] () at (3.35,-2){};
	\node[node_style] (rfl) at (2.75,-2) {};\node[node_style] (rfm) at (3,-2) {}; \node[node_style] (rfr) at (3.5,-2) {};
	
	\draw (y2)--(y1)--(v)--(x1)--(x2);
	\draw (v)--(lfl)--(lfm)--(v)--(lfr);
	\draw (v)--(mfl)--(mfm)--(v)--(mfr);
	
	\draw (v)--(rfl)--(rfm)--(v)--(rfr);
	
	\node[node_style] (1) at (4,-0.6){};
	\node[node_style] (2) at (4,-0.2){};
	\node[node_style] (3) at (4,0.6){};
	\node[node_style2] () at (4,0.1){};
	\node[node_style2] () at (4,0.2){};
	\node[node_style2] () at (4,0.3){};
	\draw (y1)--(1)--(v)--(2)--(1)--(v)--(3)--(x1);	
	
	\draw (y2)--(rfr);
	\draw (y2)--(3.7,-2.1)--(2.5,-2.1)--(2.5,-2);
	\draw (y2)--(3.9,-2.2)--(1.25,-2.2)--(1.25,-2);
	\draw (x2)--(5.2,0)--(5.2,-2.4)--(2.75,-2.4)--(rfl);
	\draw (x2)--(5.3,0)--(5.3,-2.5)--(1.75,-2.5)--(mfl);
	\draw (x2)--(5.4,0)--(5.4,-2.6)--(0.5,-2.6)--(lfl);
	
	\node[node_style] (x3) at (5,1.5){};\node[node_style] (x4) at (4.5,1.5){};
	\node[node_style] (y3) at (5,2){};\node[node_style] (y4) at (4.5,2){};
	\draw (x3)--(y4);\draw(y3)--(x4);
	\node[node_style] (x5) at (4,1.5){};\node[node_style] (x6) at (3.5,1.5){};
	\node[node_style] (y5) at (4,2){};\node[node_style] (y6) at (3.5,2){};
	\draw (x5)--(y6);\draw(y5)--(x6);
	\node[node_style] (x7) at (2.5,1.5){};\node[node_style] (x8) at (2,1.5){};
	\node[node_style] (y7) at (2.5,2){};\node[node_style] (y8) at (2,2){};
	\draw (x7)--(y8);\draw(y7)--(x8);
	\node[node_style](x9)at(1.5,1.5){};\node[node_style](x10)at(1,1.5){};\node()at(0.8,1.5){$x$};
	\node[node_style] (y9) at (1.5,2){};\node[node_style] (y10) at (1,2){};\node()at(0.8,2){$y$};
	\draw (x9)--(y10);\draw(y9)--(x10);
	
	\draw [line width=1](v)--(x1)--(x2)--(x3)--(3.3,1.5);
	\node () at (3.0,1.45) {$\cdots$};
	\draw [line width=1] (2.7,1.5)--(x10);
	\draw [line width=1] (v)--(y1)--(y2)--(5.6,-1)--(5.6,2)--(3.3,2);
	\node () at (3.0,1.95) {$\cdots$};
	\draw [line width=1](2.7,2)--(y10);
\end{scope}
\end{tikzpicture}
\caption{}\label{fig-side1}
\end{minipage}
\begin{minipage}{6cm}
\begin{tikzpicture}\centering
\tikzstyle{node_style2} = [shape = circle,fill = black,minimum size = 0.5pt,inner sep=0.5pt]
\tikzstyle{node_style} =[shape = circle,fill = black,minimum size = 2pt,inner sep=1pt]
\begin{scope}
	\fill[gray!20,opacity=1] (2,0) -- (0.5,-2) -- (1.25,-2) -- cycle;
	\fill[gray!20,opacity=1] (2,0) -- (1.75,-2) -- (2.5,-2) -- cycle;
	\fill[gray!20,opacity=1] (2,0) -- (2.75,-2) -- (3.5,-2) -- cycle;
	\fill[gray!20,opacity=1] (2,0) -- (4,-1) -- (4.4,-0.2);
	\fill[gray!20,opacity=1] (2,0) -- (4,1) -- (4.4, 0.2);
	
	\node[node_style] (v) at (2,0) {};\node () at (1.8,0) {$u$};
	\node[node_style] (x1) at (4,1) {};\node[node_style] (x2) at (5,1) {};
	\node[node_style] (y1) at (4,-1) {};\node[node_style] (y2) at (5,-1) {};
	
	\node[node_style2] () at (0.9,-2) {};
	\node[node_style2] () at (1,-2) {};
	\node[node_style2] () at (1.1,-2) {};
	\node[node_style] (lfl) at (0.5,-2)  {};\node[node_style] (lfm) at (0.75,-2) {}; \node[node_style] (lfr) at (1.25,-2) {};
	\node[node_style2] () at (1.4,-2){};
	\node[node_style2] () at (1.5,-2){};
	\node[node_style2] () at (1.6,-2){};
	\node[node_style2] () at (2.15,-2){};
	\node[node_style2] () at (2.25,-2){};
	\node[node_style2] () at (2.35,-2){};
	\node[node_style] (mfl) at (1.75,-2) {};\node[node_style] (mfm) at (2,-2) {}; \node[node_style] (mfr) at (2.5,-2) {};
	\node[node_style2] () at (3.15,-2){};
	\node[node_style2] () at (3.25,-2){};
	\node[node_style2] () at (3.35,-2){};
	\node[node_style] (rfl) at (2.75,-2) {};\node[node_style] (rfm) at (3,-2) {}; \node[node_style] (rfr) at (3.5,-2) {};
	
	\draw (y2)--(y1)--(v)--(x1)--(x2);
	\draw (v)--(lfl)--(lfm)--(v)--(lfr);
	\draw (v)--(mfl)--(mfm)--(v)--(mfr);
	
	\draw (v)--(rfl)--(rfm)--(v)--(rfr);
	
	\node[node_style] (lofl) at (4.1,-0.8){};\node[node_style] (lofu) at (4.4,-0.2){}; 	\draw (y1)--(lofl)--(v)--(lofu);
	\node[node_style2] (ldot1) at (4.2,-0.6){};
	\node[node_style2] (ldot2) at (4.25,-0.5){};
	\node[node_style2] (ldot3) at (4.3,-0.4){};
	\node[node_style] (upfu) at (4.1,0.8){};\node[node_style] (upfl) at (4.4, 0.2){};	\draw (x1)--(upfu)--(v)--(upfl);
	\node[node_style2] (ldot1) at (4.2,0.6){};
	\node[node_style2] (ldot2) at (4.25,0.5){};
	\node[node_style2] (ldot3) at (4.3,0.4){};
	\draw (lofu)--(x2);
	\draw (upfl)--(y2);
	
	\draw (y2)--(rfr);
	\draw (y2)--(3.7,-2.1)--(2.5,-2.1)--(2.5,-2);
	\draw (y2)--(3.9,-2.2)--(1.25,-2.2)--(1.25,-2);
	\draw (x2)--(5.2,0)--(5.2,-2.4)--(2.75,-2.4)--(rfl);
	\draw (x2)--(5.3,0)--(5.3,-2.5)--(1.75,-2.5)--(mfl);
	\draw (x2)--(5.4,0)--(5.4,-2.6)--(0.5,-2.6)--(lfl);
	
	\node[node_style] (x3) at (5,1.5){};\node[node_style] (x4) at (4.5,1.5){};
	\node[node_style] (y3) at (5,2){};\node[node_style] (y4) at (4.5,2){};
	\draw (x3)--(y4);\draw(y3)--(x4);
	\node[node_style] (x5) at (4,1.5){};\node[node_style] (x6) at (3.5,1.5){};
	\node[node_style] (y5) at (4,2){};\node[node_style] (y6) at (3.5,2){};
	\draw (x5)--(y6);\draw(y5)--(x6);
	\node[node_style] (x7) at (2.5,1.5){};\node[node_style] (x8) at (2,1.5){};
	\node[node_style] (y7) at (2.5,2){};\node[node_style] (y8) at (2,2){};
	\draw (x7)--(y8);\draw(y7)--(x8);
	\node[node_style](x9)at(1.5,1.5){};\node[node_style](x10)at(1,1.5){};\node()at(0.8,1.5){$x$};
	\node[node_style] (y9) at (1.5,2){};\node[node_style] (y10) at (1,2){};\node()at(0.8,2){$y$};
	\draw (x9)--(y10);\draw(y9)--(x10);
	
	\draw [line width=1](v)--(x1)--(x2)--(x3)--(3.3,1.5);
	\node () at (3.0,1.45) {$\cdots$};
	\draw [line width=1] (2.7,1.5)--(x10);
	\draw [line width=1] (v)--(y1)--(y2)--(5.6,-1)--(5.6,2)--(3.3,2);
	\node () at (3.0,1.95) {$\cdots$};
	\draw [line width=1](2.7,2)--(y10);
\end{scope}
\end{tikzpicture}
\caption{}
\label{fig-side2}
\end{minipage}
\end{figure}
\end{center}
\begin{proof}
Suppose that $R'$ is a graph contradicting the lemma. Let $R$ be a graph obtained from $R'$ using the operations of Lemma \ref{constructions} up to the point that they can no longer be performed. If we prove the lemma for $R$, it will also hold for $R'$. If all vertices in the ropes are clean the result is clear. So, we may assume that there is a non-clean vertex. Write $\rho_z=u,z_1,\dots,z_{n(z)}$ for each $z\in \{x,y\}$. Consider the smallest index $c(z)$ such that all vertices $z_{c(z)+1},\dots,z_{n(z)}$ are clean. By (RB6), $1\le c(z) \le 2$. 

If $c(z)<t\le n(z)$, there is an unique step arriving in $z_t$ since operation (a) of Lemma \ref{constructions} cannot be performed and (a1) would hold otherwise. In particular, the step arriving in $z_t$ must cross another step since operation (b) of Lemma \ref{constructions} also cannot be performed. 

Also, there is an unique step $\alpha_z$ arriving in $z_1$, as we prove next. If $c(z)=2$, it follows from (RB6). If $c(z)=1$, it follows from the fact that item (a1) of Lemma \ref{constructions} does not hold.

If there are no long steps, $x_2$ and $y_2$ are the unique possibly non-clean vetices and the result holds. So, we may assume that there is some long step.

If $c(x)=1$, then $x_1$ is the unique non-clean vertex of $\rho_x$ and, as an unique step arriving at $x_1$, $\alpha_x$ is the unique long step. If $\alpha_y=\alpha_x$, $R$ is a minor of a graph as in Figure \ref{fig-side1}, otherwise it is a minor a graph as in Figure \ref{fig-side2}. 

So, we may assume that $c(x)=2$ and, analogously, that $c(y)=2$. If $\alpha_x=\alpha_y$, then $R$ is a minor of a graph as in Figure \ref{fig-side1}. Otherwise, as each step cross at most one other, $\alpha_x$ has an endvertex in $y_2$ and $\alpha_y$ has an endvertex in $x_2$. It follows that $R$ is a minor of a graph as in Figure \ref{fig-side2}.
\end{proof}


\begin{lemma}\label{components are rope-bridges}
Suppose that $S=\{x,y\}$ and let $\{i,j\}=\{1,2\}$. Moreover, suppose
\begin{enumerate}
 \item [(i)] $d_G(u_i)\ge 4$ and $S^+$ is a two-sided vertex-cut or
 \item [(ii)] $N_G(u_i)=\{u_j,x,y\}$.
\end{enumerate}
For $\{k,l\}=\{1,2\}$, we define an \defin{$u_k$-component} as a connected component of $G\del S^+$ with a neighbor of $u_k$ and no neighbor of $u_l$. Then, one of the following assertions hold:
\begin{enumerate}
 \item [(a)] $N_G(u_j)=\{x,y,u_i\}$,
 \item [(b)] There is an unique $u_j$-component $K$ and $K+\{u_j,x,y\}$ is a $(u_j,x,y)$-rope bridge.
 \item [(c)] There is at least two $u_j$-components and, for each $u_j$-component $K$, $K+\{u_j,x,y\}$ is a fan with hub $u_j$ and endvertices $x$ and $y$ or a wheel with hub $u_j$ and $xy$ as edge.
\end{enumerate}
\end{lemma}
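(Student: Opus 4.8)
The plan is to verify, for each $u_j$-component $K$, that $R:=K+\{u_j,x,y\}$ is a $(u_j,x,y)$-rope bridge by checking the hypotheses and condition (b) of Lemma~\ref{rope-bridge criterion}, and then to read off (a), (b) or (c) according to the number of $u_j$-components. I will use repeatedly the following easy observations, valid in both cases (i) and (ii): $N_G(v)\subseteq V(K)\cup\{u_j,x,y\}$ for every $v\in V(K)$ (so $d_R(v)=d_G(v)\ge 3$); both $x$ and $y$ have a neighbour in every component of $G\del S^+$ (Lemma~\ref{edges from S}); every component of $G\del S^+$ with a neighbour of $u_j$ is a $u_j$-component; and there is a $u_i$-component in case (i) (as $d_G(u_i)\ge 4$ and $S^+$ is two-sided) but none in case (ii). If $u_j$ has no neighbour outside $S^+$ then $N_G(u_j)\subseteq\{u_i,x,y\}$ and, since $d_G(u_j)\ge 3$, we are in case (a). So from now on assume there is at least one $u_j$-component $K$ and put $R:=K+\{u_j,x,y\}$, which is connected and has the required minimum degree.

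The heart of the matter is condition (b) of Lemma~\ref{rope-bridge criterion}. Let $z\in\{x,y\}$, let $C$ be a cycle of $R\del\{u_j,z\}$, and suppose $P$ is a $(u_j,z)$-path of $R$ disjoint from $C$; I will produce a cycle through $e$ disjoint from $C$, contradicting that $e$ is Dirac. Note $C$ lies in $G[V(K)\cup(\{x,y\}-z)]$. Starting from $u_j$, follow $P$ to $z$; then go from $z$ to $u_i$ avoiding $C$, either directly along the edge $zu_i$ in case (ii) (since $N_G(u_i)=\{u_j,x,y\}$) or through a $u_i$-component $M$ in case (i) (using that $z$ has a neighbour in $M$, that $M$ has a neighbour of $u_i$, and that $V(M)$ misses $V(K)\cup\{x,y\}$); finally close up with the edge $e$. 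The pieces are internally disjoint, the new vertices lie outside $V(C)$, and the cycle uses $e$, so this contradicts the Dirac hypothesis; hence condition (b) holds.

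Next I must exhibit the ropes. Suppose $R$ has no $(u_j,x)$-path and $(u_j,y)$-path meeting only in $u_j$. Adjoining a new vertex adjacent to $x$ and $y$ and applying Menger's Theorem, some vertex $v\ne u_j$ of $R$ separates $u_j$ from $\{x,y\}$ in $R$. If $v\in\{x,y\}$, say $v=x$, the component of $R-x$ containing $y$ has at least two vertices (it contains a neighbour of $y$ in $K$) and has its $G$-neighbourhood inside itself together with $x$, producing a $2$-vertex cut $\{x,y\}$ of the $3$-connected $G$, a contradiction; so $v\in V(K)$. Then $3$-connectivity forces the $u_j$-side of $R-v$ to be exactly $\{u_j\}$, so the unique neighbour of $u_j$ in $R$ is $v$, $u_j$ is adjacent to neither $x$ nor $y$, and $d_G(u_j)\ge 3$ forces a second $u_j$-component to exist. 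A short cycle-chasing argument then shows that whenever there are at least two $u_j$-components, each such component $K$ is a tree in which $x$ and $y$ have exactly one neighbour: otherwise $G[V(K)\cup\{x\}]$ or $G[V(K)]$ contains a cycle avoiding $u_j$ and one of $x,y$, which pairs with a cycle through $e$ that escapes $u_j$ through the \emph{other} $u_j$-component and then reaches $u_i$ as in the previous paragraph. Under the constraint that $u_j$'s only neighbour in $R$ is $v$, this forces $K$ to be a path whose only leaves lie among $v$ and a single vertex adjacent to both $x$ and $y$, and Lemma~\ref{neighbor} together with the minimality of $S$ then yields a contradiction. So the ropes exist, and Lemma~\ref{rope-bridge criterion} gives that $R$ is a $(u_j,x,y)$-rope bridge.

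If $K$ is the only $u_j$-component, this is conclusion (b). If there are at least two, then by the tree argument above each $u_j$-component $K$ is a tree with $x$ and $y$ having unique neighbours $b_x,b_y$ in $K$, so $R-u_j=G[V(K)\cup\{x,y\}]$ is a tree (when $xy\notin E(G)$) or unicyclic (when $xy\in E(G)$). Combining this with the axioms (RB0)--(RB6) finishes the classification: the minimum-degree condition forces the first interior vertex of each rope to be a step-endvertex and every interior vertex of a rope or step to be adjacent to $u_j$, while two distinct long steps would create more cycles in $R-u_j$ than it has; hence $R$ consists of a single $x$--$y$ path through $K$ (together with the edge $xy$ in the unicyclic case) with $u_j$ joined to all its interior vertices, that is, a fan with hub $u_j$ and endvertices $x,y$, or a wheel with hub $u_j$ and $xy$ an edge, which is conclusion (c). I expect this last classification, and the existence-of-ropes argument, to be the main obstacles, since they are the only steps where the rope-bridge axioms and the minimality of $S$ must be used in a genuinely coordinated way; everything else is routine cycle-chasing with the Dirac hypothesis.
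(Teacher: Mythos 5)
Your verification of condition (b) of Lemma~\ref{rope-bridge criterion} (routing a cycle through $e$ via the edge $zu_i$ or via a $u_i$-component) and your tree/unique-neighbour arguments for the multi-component case are sound and essentially the paper's. The genuine gap is in the step ``so the ropes exist'': this claim is false, and your derivation of a contradiction from their non-existence breaks down exactly where you appeal to Lemma~\ref{neighbor} and the minimality of $S$. If a $u_j$-component $K$ is a single vertex $v$ with $N_G(v)=\{u_j,x,y\}$ and $u_jx,u_jy\notin E(G)$, then $R=K+\{u_j,x,y\}$ has no pair of $(u_j,x)$- and $(u_j,y)$-paths meeting only in $u_j$, so $R$ is \emph{not} a $(u_j,x,y)$-rope bridge; and nothing contradicts Lemma~\ref{neighbor}, since $v$ has the two neighbours $x,y$ in $G\del\{u_1,u_2\}$. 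This situation actually occurs (take $G$ to be $K_{3,3}$ with parts $\{u_2,x,y\}$ and $\{u_1,a,b\}$ plus the edge $e=u_1u_2$: then $\{a\}$ and $\{b\}$ are single-vertex $u_2$-components and $S=\{x,y\}$ is minimal), and it is precisely the degenerate fan of conclusion (c), not a failure of the hypotheses. Your chain ``$v$ is the unique neighbour of $u_j$ in $R$ $\Rightarrow$ second component exists $\Rightarrow$ $K$ is a path with leaves $v$ and a vertex adjacent to both $x$ and $y$ $\Rightarrow$ contradiction'' only produces a contradiction when $|K|\ge 2$; for $|K|=1$ the two named leaves coincide and everything is consistent.

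Consequently the architecture ``every $u_j$-component yields a rope bridge, then specialize to fan/wheel when there are several'' cannot work. The paper separates the cases at the outset: when there is a \emph{unique} $u_j$-component it obtains the ropes not from Menger inside $R$ but from Menger in the $3$-connected graph $G$ --- three internally disjoint $(u_j,u_i)$-paths, of which one must pass through $x$ and one through $y$ (since no component sees both $u_1$ and $u_2$), and whose initial segments up to $x$ and $y$ have all interior vertices in the unique component; when there are two or more components it never invokes rope bridges at all, instead showing directly (by pairing a putative cycle in $K+z$ with a cycle through $e$ that escapes via a \emph{different} $u_j$-component) that each component is a tree in which $x$ and $y$ have unique neighbours, hence a path, hence a fan or wheel. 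Your final paragraph's classification of multi-component rope bridges via (RB0)--(RB6) is then unnecessary, and in the single-vertex case it is vacuous because the object being classified is not a rope bridge.
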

\begin{proof}
Suppose that (a) fails. Then, there is some $u_j$-component. Note that either there is an $u_i$-component or $N_G(u_i)\cont S^+$. In both cases, for each $z\in\{x,y\}$, there is a $(z,u_i)$-path $\beta_z$ whose internal vertices are all in an $u_i$-component (in case they exist).

First suppose that there is an unique $u_j$-component $K$. By Menger's Theorem, there are three internally disjoint paths from $u_j$ to $u_i$, one of these must contain $x$, namely $\zeta_x$, and other must contain $y$, call it $\zeta_y$. We define, for $z=x,y$, $\rho_z:=u_j,\zeta_z,z$. By the uniqueness of $K$, $\rho_x$ and $\rho_y$ are in $R:=K+\{u_j,x,y\}$. We shall prove that $R$ is an $(u_j,x,y)$-rope-bridge with ropes $\rho_x$ and $\rho_y$. For this we use Lemma \ref{rope-bridge criterion}; if $R$ does not satisfy item (b) of that lemma, there is $z\in\{x,y\}$, a cycle $C$ of $R\del\{u_j,z\}$ and an $(u_j,z)$-path $\alpha$ avoiding $C$. Now $u_i,u_j,\alpha,z,\beta_z,u_i$ is a cycle avoiding $C$, a contradiction.  Thus, (b) holds.

Now suppose that there are $u_j$-components $K_1,\dots,K_m$ for some $m\ge 2$. Let us show that each $K_k$ is a tree. Suppose for a contradiction that $D$ is a cycle in, say, $K_1$.  Now a cycle of the form $u_i,u_j,K_2,x,\beta_x,u_i$ avoids $D$, a contradiction. Then $K_k$ is a tree for $k=1,\dots,m$.

Let us prove that, for each $k=1,\dots,m$, each element of $\{x,y\}$ has an unique neighbor in $K_k$. Say that $y$ have two different neighbors in $K_1$. Now, $K_1+y$ has a cycle $C$. Note that $C$ avoids a cycle of the form $u_i,u_j,K_2,x,\beta_x,u_i$, a contradiction. So, each element of $\{x,y\}$ has an unique neighbor in $K_k$ for each $k=1,\dots,m$.

If $K_k$ has three different leaves, then two of them have a same neighbor in $\{x,y\}$ a contradiction. Then $K_k$ is a path. Therefore, each leaf of $K_k$ has a different neighbor in $\{x,y\}$ and each vertex of $K_k$ must have $u_j$ as neighbor. This implies (c). 
\end{proof}

Recall the definitions of type (a), (b) and (c) graphs from Theorem \ref{main}. It is straighforward to check that all graphs decribed in items (a), (b) and (c) of Theorem \ref{main} have no pair of disjoint cycles whose union contains $e$. Therefore, Theorem \ref{main} is a direct consequence of the following theorem:

\begin{theorem}
\label{main-cor}
Suppose that $G$ is a $3$-connected graph with at least six vertices and an edge $e:=u_1u_2$ such that there is no pair of disjoint cycles of $G$ which union contains the edge $e$. Let $S$ be smallest set such that $S^+:=S\cup\{u_1,u_2\}$ contains a vertex cut of $G$. Then one of the following assertions holds.
\begin{enumerate}
\item [(a)] $|S|=2$, $d_G(u_1),d_G(u_2)\ge 4$, we may not pick $S$ such that $S^+$ is two-sided and $G$ is a type (a) $e$-Dirac graph.
\item [(b)] $|S|=1$ and $G$ is a type (b) $e$-Dirac graph.
\item [(c)] $|S|=2$, $G$ is a type (c) $e$-Dirac graph, we may pick $S$ with the property that for some $\{i,j\}=\{1,2\}$ either $N_G(u_i)=S\cup u_2$ or $d_G(u_i)\ge 4$ and $S^+$ is a $2$-sided vertex cut. Moreover, we call the {\bf $u_j$-components} the connected components of $G-S^+$ with a neighbor of $u_j$ and one of the following assertions holds:
\begin{enumerate}
 \item [(c1)] $N_G(u_j)=S\cup u_i$;
 \item [(c2)] there is only one $u_j$-component $J$, which has the property that, for $S=\{x,y\}$, $J+\{u_j,x,y\}$ is an $(u_j,x,y)$-rope bridge; or
 \item [(c3)] there is more that one $u_j$-component and each $u_j$-conponent $J$ has the property that either $J+(S\cup u_j)$ either is a fan with $u_j$ as hub and with endvertices in $S$ or is a wheel with $u_2$ as hub and contains an edge linking the vertices of $S$.
\end{enumerate}
\end{enumerate}
\end{theorem}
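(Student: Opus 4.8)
The plan is to run a case analysis on $|S|$, and, when $|S|=2$, on the degrees of $u_1,u_2$ and on whether some smallest choice of $S$ makes $S^+$ two-sided, feeding each case into the structural lemmas already established. First I would record that $S$ is well defined with $1\le|S|\le 2$: existence follows from Lemma~\ref{complete} since $|G|\ge 6$, and $|S|\ge 1$ by $3$-connectivity; for the upper bound, if $d_G(u_1),d_G(u_2)\ge 4$ then $|S|\le 2$ is Lemma~\ref{Sle2}, while if some $d_G(u_i)=3$, say $N_G(u_i)=\{u_j,x,y\}$, then $\{u_j,x,y\}$ is a vertex-cut of $G$ (it isolates $u_i$, and $|G|-3\ge 2$) contained in $\{x,y\}\cup\{u_1,u_2\}$, so the set $\{x,y\}$ already witnesses $|S|\le 2$. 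If $|S|=1$, then $S^+$ is a $3$-vertex-cut and Lemma~\ref{all type b} gives conclusion~(b). So from now on $|S|=2$, say $S=\{x,y\}$.

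Suppose first that $d_G(u_1),d_G(u_2)\ge 4$ and that no smallest $S$ makes $S^+$ two-sided. I would handle one-sidedness directly. If a smallest cut $X=\{u_1,u_2,x,y\}$ is one-sided and $G\setminus X$ has at least three components, then Lemma~\ref{edges from S} produces a cycle through $e$ inside one component together with $\{u_1,u_2\}$ that is disjoint from a cycle of the form $x,K_2,y,K_3,x$, contradicting the Dirac hypothesis; so $G\setminus X$ has exactly two components $K_1,K_2$, and the same kind of argument forces $xy\notin E(G)$ and shows each $G[V(K_t)\cup\{x,y\}]$ is a tree. Since $G\setminus\{u_1,u_2\}$ is $2$-connected (minimality of $S$ forbids a $1$-cut there), each such tree is an $x$--$y$ path, so $G\setminus\{u_1,u_2\}$ is a cycle; contracting $e$ and simplifying turns $G$ into a wheel, and $G$ is obtained from that wheel by doubling some spokes and splitting the hub by $e$, i.e. $G$ is a type~(a) graph. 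If instead no smallest cut through $\{u_1,u_2\}$ is one-sided, they are all no-sided and Lemma~\ref{ugly} yields the same conclusion. Either way conclusion~(a) holds.

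In the remaining case $|S|=2$ and either some $d_G(u_i)=3$ or some smallest $S$ has $S^+$ two-sided. I would fix $\{i,j\}$ and $S$ so that Lemma~\ref{components are rope-bridges} applies: if $d_G(u_i)=3$ take $S=N_G(u_i)\setminus\{u_j\}$, so that $N_G(u_i)=S\cup\{u_j\}$ and (since every component of $G\setminus S^+$ must meet a neighbour of $u_j$, as it cannot attach only to the $2$-set $S$) $S^+$ is automatically two-sided; otherwise take a two-sided smallest $S$ and any $i$. Lemma~\ref{components are rope-bridges} then gives one of its three alternatives, which are precisely conditions~(c1), (c2), (c3) of the statement; applying the lemma a second time with the roles of $i$ and $j$ reversed (this is vacuous when $d_G(u_i)=3$, as then $u_i$ has no neighbour outside $S^+$) describes the $u_i$-side as well. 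It then remains to assemble the two sides: Lemma~\ref{rope bridge is a minor} realises each rope-bridge side as a minor, using the ropes, of a half of Figure~\ref{fig-side1} or Figure~\ref{fig-side2}, the fan and wheel components and the trivial side being accommodated as the hanging wheels and the pendant edge, and gluing the two sides across the common $2$-cut $\{x,y\}$ and the edge $e$ exhibits $G$ as a $3$-connected minor, using $e$, of a graph of one of the three shapes in Figures~\ref{fig-type1}--\ref{fig-type3}. Hence $G$ is a type~(c) graph and conclusion~(c) holds.

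The step I expect to be the main obstacle is this final assembly: converting the per-side data produced by Lemma~\ref{components are rope-bridges} (a rope bridge, a collection of fans and wheels, or the trivial side) into the clean statement that $G$ is a minor, using $e$, of one of the three named figures. This needs one to see that the ropes of the two sides can be chosen to terminate at the common $2$-cut $\{x,y\}$, that the two possible forms of a rope bridge (Figures~\ref{fig-side1} and~\ref{fig-side2}) combine across $e$ in exactly the three ways drawn, and that inserting the fan and wheel components as hanging wheels preserves $3$-connectivity and the use of $e$; the difficulty here lies in the bookkeeping rather than in any single idea.
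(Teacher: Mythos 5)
Your proposal is correct and takes essentially the same route as the paper's proof: Lemma~\ref{all type b} for $|S|=1$; Lemma~\ref{ugly} together with a direct analysis of the one-sided case (reducing $G\setminus\{u_1,u_2\}$ to a cycle) for conclusion~(a); and Lemmas~\ref{components are rope-bridges} and~\ref{rope bridge is a minor} for conclusion~(c). The final assembly into the graphs of Figures~\ref{fig-type1}--\ref{fig-type3}, which you flag as the main obstacle, is treated no less briefly in the paper itself.
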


\begin{proof}
Suppose that the theorem fails. If $|S|=1$, (b) follows from Lemma \ref{all type b}. Assume $|S|\ge2$. We will split the proof into two cases as follows.
\smallskip

{\it Case 1: Some vertex in $\{u_1,u_2\}$ has degree $3$ in $G$ or we may pick $S$ such that $S^+$ is a two-sided vertex-cut. }

By Lemma \ref{Sle2}, the hypothesis of Lemma \ref{components are rope-bridges} holds. Consider the terminologies as in that lemma. Let $\{i,j\}=\{1,2\}$ and let $K_1,\dots,K_m$ be the $u_j$-components. To prove that $G$ is a type (c) graph it suffices to prove that there is an isomorphism $\Phi$ between $H:=G[K_1\cup\cdots\cup K_m\cup \{u_j,x,y\}]$ and a minor of a graph like in Figures \ref{fig-side1} or \ref{fig-side2} such that $\Phi$ preserves $x$ and $y$ and $\varphi(u_j)=u$. The result is clear if items (a) or (c) of Lemma \ref{components are rope-bridges} holds. So we may assume that item (b) of that lemma holds. Now, (c) follows from Lemma \ref{rope bridge is a minor}.

\smallskip

{\it Case 2: $d_G(u_1),d_G(u_2)\ge 4$ and we cannot choose $S$ so that $S^+$ is a two-sided vertex cut.}

As $d_G(u_1),d_G(u_2)\ge 4$, by Lemma \ref{Sle2}, $|S|=2$. If all $4$-vertex cuts of $G$ are no-sided, (a) follows from Lemma \ref{ugly}. Assume the contrary. As Case 1 does not hold, we may pick $S=\{x,y\}$ in such a way that $S^+$ is one-sided. 

If $\kappa\ge 3$, then cycles of the form $u_1,u_2,G_1,u_1$ and $x,G_2,y,G_3,x$ avoid each other. Thus, $\kappa=2$. If an element of $S$ has two edges to a same component of $G\del S^+$, say $x$ has two edges to $G_1$, then cycles of the form $x,G_1,x$ and $u_1,u_2,G_2,u_1$ avoid each other. Thus, each element of $S$ has exactly one edge to each component of $G\del S^+$. 

Let $\{i,j\}=\{1,2\}$. If $G_i$ has a cycle, then it avoids a cycle of the form $u_1,u_2,G_j,u_1$. Hence $G_i$ is a tree. If $l$ is a leaf of $G_i$ with no neighbor in $S$, then $u_1l,u_2l\in E(G)$ and $S$ has neighbors in $G_i\del l$. This implies that a cycle of $G\del{u_i,u_j,l}$ avoids $u_1,u_2,l,u_1$, a contradiction. Thus each leaf of $G_i$ has a neighbor in $\{x,y\}$. As a consequence, if $G_i$ has three different leaves, then two of them have a same neighbor in $S$ and a vertex of $S$ has two different neighbors in $G_i$, a contradiction. Therefore, we may write $G_i$ as a path $w_1,\dots,w_n$ with $N_G(x)\cap V(G_i)=\{w_1\}$ and $N_G(y)\cap V(G_i)=\{w_n\}$. Now, to prove (a), it is left to check that $xy\notin E(G)$. Indeed, suppose the contrary. Then, $G_i+\{x,y\}$ has a cycle $C$. But, as $S^+$ is two sided, $G_2+\{u_1,u_2\}$ has a cycle containing $e$ and avoiding $C$, a contradiction.
\end{proof}

\section{Strongly $e$-Dirac graphs}\label{sec-strong}
In this section, we prove Theorems \ref{main-strong} and \ref{main-strong-2con}. We say that a graph $G$ with an edge $e$ is \defin{strongly $e$-Dirac} if $G$ has no pair of edge-disjoint cycles whose union contains $e$. Clearly all strongly $e$-Dirac graphs are $e$-Dirac graphs.

\begin{lemma}\label{str type b}
All type (b) $e$-Dirac $3$-connected graphs with more than five vertices are not strongly $e$-Dirac.
\end{lemma}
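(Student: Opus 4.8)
The plan is to give a short, direct construction: for any such $G$ I will exhibit two edge-disjoint cycles whose union contains $e$, which is exactly the negation of being strongly $e$-Dirac. Write $e=uv$ and let $w$ be a vertex as in the definition of type~(b), so $\{u,v,w\}$ is a vertex-cut of $G$ and every component of $G\del\{u,v,w\}$ is a tree with a unique neighbour of $w$. Since $|G|\ge 6>3$, the set $\{u,v,w\}$ is not all of $V(G)$, so $G\del\{u,v,w\}$ has a component $K$; write $x$ for its unique neighbour of $w$.

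The first step is to find a vertex $p\in V(K)\setminus\{u,v,w\}$ adjacent to both $u$ and $v$. If $|V(K)|=1$ then $V(K)=\{x\}$, and since all neighbours of $x$ lie in $\{u,v,w\}$ while $d_G(x)\ge 3$, the vertex $x$ is adjacent to each of $u,v,w$; take $p:=x$. If $|V(K)|\ge 2$ then the tree $K$ has a leaf $\ell\ne x$; as $\ell$ has only one neighbour in $K$, $\ell\notin\{u,v,w\}$, and $d_G(\ell)\ge 3$, it has at least two neighbours in $\{u,v,w\}$, and since $x$ is the only neighbour of $w$ inside $K$ the vertex $\ell$ is not adjacent to $w$; hence $\ell$ is adjacent to both $u$ and $v$, and we take $p:=\ell$.

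The second step is an edge count. Let $C_1$ be the triangle $u,p,v,u$ (valid since $uv=e$, $up,vp\in E(G)$, and $u,v,p$ are distinct), so $e\in E(C_1)$, and let $G':=G\del E(C_1)$, which has $|G'|=|G|$ and $\|G'\|=\|G\|-3$. As $G$ is $3$-connected on at least six vertices it has minimum degree at least $3$, so $2\|G\|\ge 3|G|$, giving $\|G\|\ge\tfrac32|G|\ge|G|+3$ (the final step using $|G|\ge 6$). Hence $\|G'\|\ge|G|=|G'|$, so $G'$ has at least as many edges as vertices and therefore contains a cycle $C_2$ (a forest has strictly fewer edges than vertices). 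Since $E(C_2)\subseteq E(G)\setminus E(C_1)$, the cycles $C_1,C_2$ are edge-disjoint and their union contains $e$; so $G$ is not strongly $e$-Dirac.

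I do not anticipate a real obstacle: the argument is short once the triangle through $e$ is in hand. The two places that need a little care are the case split producing $p$ (recovering adjacency to both $u$ and $v$ from a singleton component through the minimum-degree bound, and from a larger component through one of its leaves other than $x$) and the remark that the bound $\|G\|\ge|G|+3$ — which is exactly what the hypothesis ``more than five vertices'' yields — is precisely what permits deleting the three edges of $C_1$ and still finding a cycle in the remainder.
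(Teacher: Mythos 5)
Your proof is correct, and it takes a genuinely different route from the paper's. The paper first rules out three or more components of $G\setminus\{u,v,w\}$ by an explicit pair of cycles, and then, with exactly two components $K$ and $K'$ and $|K|\ge 2$, builds the second cycle by hand from two leaves of $K$ and a cycle through $K'$. You instead use the type-(b) structure only to locate a common neighbour $p$ of $u$ and $v$ (via the unique-neighbour-of-$w$ condition and the minimum-degree bound, in both the singleton and the multi-vertex component cases), and then replace all further structural analysis by the count $\|G\|\ge\tfrac32|G|\ge|G|+3$, which guarantees that $G\setminus E(C_1)$ still has at least $|G|$ edges and hence is not a forest. This buys brevity and robustness: no case split on the number of components, and in fact your argument proves the stronger statement that any $3$-connected graph on at least six vertices in which $e$ lies in a triangle fails to be strongly $e$-Dirac, with the type-(b) hypothesis used only to produce that triangle. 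The paper's construction, by contrast, exhibits the two cycles concretely, which is occasionally useful when one wants to track where they live. Both arguments are complete; yours is the more economical.
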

\begin{proof}
Suppose that $G$ is a graph contradicting the lemma and let $e=uv$. Consider a $3$-vertex-cut $S:=\{u,v,w\}$ of $G$ as in item (b) of Theorem \ref{main}. If $G\del S$ has three distinct connected components $K_1$, $K_2$ and $K_3$, then $G$ has edge-disjoint cycles of the form $u,v,K_1,u$ and $v,K_2,w,K_3,v$, a contradiction. Thus $G\del S$ has exactly two connected components, $K$ and $K'$. As $|G|\ge 6$, we may assume that $|K|\ge 2$. Note that there are two leaves $l_1$ and $l_2$ in $K$ such that $x_K\neq l_1$. As $d_G(l_2)\ge 3$, $l_2$ has a neighbor in $\{u,v\}$, say $v$, while both $u$ and $v$ are adjacent to $l_1$. Consider the $(l_1,l_2)$-path $\gamma$ of $K$. Now, $G$ has $v,l_1,\gamma,l_2,v$ edge-disjoint from a cycle of the form $u,K',v,u$, a contradiction.
\end{proof}

\begin{lemma}\label{str type a}
If $G$ is a $3$-connected type (a) strongly $e$-Dirac graph, then $G$ is isomorphic to the prism or to $K_{3,3}$.
\end{lemma}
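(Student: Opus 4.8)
The plan is to work directly from the structure of a type (a) graph recorded in Theorem~\ref{main}. Such a $G$ arises from a wheel $W_n$ with $n\ge 4$ by doubling a (possibly empty) set of spokes and then splitting the hub by $e=u_1u_2$; concretely, $V(G)=\{u_1,u_2\}\cup\{r_1,\dots,r_n\}$, the rim $R:=r_1r_2\cdots r_nr_1$ is a cycle, and $E(G)=E(R)\cup\{e\}\cup\{r_iu_\ell:i\in A_\ell,\ \ell\in\{1,2\}\}$, where $A_\ell:=\{i:r_iu_\ell\in E(G)\}$, $A_1\cup A_2=\{1,\dots,n\}$, and $D:=A_1\cap A_2$ is the set of doubled spokes. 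As $G$ is $3$-connected, $|A_1|=d_G(u_1)-1\ge 2$ and likewise $|A_2|\ge 2$; subscripts of the $r_i$ are read modulo $n$. Since $G$ fails to be strongly $e$-Dirac exactly when it contains a cycle through $e$ together with an edge-disjoint cycle, it suffices to produce such a pair whenever $G$ is neither the prism nor $K_{3,3}$.

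First I would eliminate doubled spokes: if $i\in D$ then $u_1,u_2,r_i,u_1$ is a triangle through $e$ that is edge-disjoint from $R$, so we may assume $D=\emptyset$, i.e. $\{1,\dots,n\}=A_1\sqcup A_2$. Since $A_1,A_2\neq\emptyset$ and the index set is cyclically ordered, some $k$ satisfies $k\in A_2$ and $k+1\in A_1$ (otherwise $A_1$ would contain the predecessor of each of its elements, forcing $A_1=\{1,\dots,n\}$, contrary to $A_2\neq\emptyset$). Put $C:=u_1,u_2,r_k,r_{k+1},u_1$, a $4$-cycle through $e$ with edge set $\{e,u_2r_k,r_kr_{k+1},r_{k+1}u_1\}$, and set $H:=G-E(C)$. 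The key claim is that $H$ contains a cycle whenever $n\ge 5$; granting this, that cycle together with $C$ shows $G$ is not strongly $e$-Dirac. To prove the claim I would use an Euler-characteristic count: $|V(H)|=n+2$, while $\|G\|=n+n+1$ since $D=\emptyset$, so $\|H\|=2n-3$; and $H$ is connected, because $R$ minus $r_kr_{k+1}$ is a Hamilton path on $\{r_1,\dots,r_n\}$ to which $u_1$ is still joined (via some $r_i$ with $i\in A_1\setminus\{k+1\}\neq\emptyset$) and $u_2$ is still joined (via some $r_i$ with $i\in A_2\setminus\{k\}\neq\emptyset$). Hence the cycle space of $H$ has dimension $\|H\|-|V(H)|+1=n-4\ge 1$, so $H$ has a cycle.

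Combining the two reductions, a strongly $e$-Dirac type (a) graph must have $D=\emptyset$ and $n=4$, so $\{1,2,3,4\}=A_1\sqcup A_2$ with $|A_1|=|A_2|=2$. Up to rotating and reflecting the rim and interchanging $u_1$ with $u_2$, either $A_1,A_2$ are the two consecutive pairs, say $A_1=\{1,2\}$ and $A_2=\{3,4\}$ — then $\{u_1,r_1,r_2\}$ and $\{u_2,r_3,r_4\}$ are triangles joined by the perfect matching $\{e,r_2r_3,r_1r_4\}$, so $G$ is the prism — or $A_1,A_2$ interleave, say $A_1=\{1,3\}$ and $A_2=\{2,4\}$ — then $G$ is bipartite with colour classes $\{u_1,r_2,r_4\}$ and $\{u_2,r_1,r_3\}$ and $G\cong K_{3,3}$. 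This gives the lemma. I expect the only mildly delicate point to be the connectedness check in the count above (needed so that the dimension estimate genuinely forces a cycle), everything else being bookkeeping; as a sanity check one may also verify directly that the prism with $e$ a matching edge and $K_{3,3}$ are indeed strongly $e$-Dirac, although that converse is not required here.
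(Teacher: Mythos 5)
Your proof is correct, and its opening move --- destroying every doubled spoke by playing the triangle $u_1,u_2,r_i,u_1$ against the rim --- is exactly the paper's first step. Where you genuinely diverge is the reduction to $n=4$: the paper fixes three rim-neighbours $y_1,y_2,y_3$ of one endpoint of $e$ and a rim-neighbour $z$ of the other (this is where it invokes the degree-four clause of the type (a) description) and exhibits two explicit edge-disjoint cycles, whereas you take an arbitrary $4$-cycle $C=u_1,u_2,r_k,r_{k+1},u_1$ through $e$ at a ``transition index'' and simply count: $H:=G-E(C)$ has $2n-3$ edges on $n+2$ vertices, so for $n\ge 5$ it cannot be a forest and hence contains a cycle edge-disjoint from $C$. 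In fact the connectedness check you single out as the delicate point is dispensable: since a forest with $c$ components on $m$ vertices has $m-c<m$ edges, the inequality $\|H\|\ge |V(H)|$ already forces a cycle in $H$ regardless of its number of components. Your route buys two things. First, it uses only $|A_1|,|A_2|\ge 2$, i.e.\ $3$-connectivity, and never the requirement that both endpoints of $e$ have degree at least four --- which is just as well, since in the two surviving graphs (the prism and $K_{3,3}$) those endpoints have degree three, so a proof leaning on that clause in the $n=4$ case sits uneasily with the conclusion. Second, your explicit $n=4$ endgame (consecutive pairs $A_1,A_2$ give the prism, interleaved pairs give $K_{3,3}$) spells out what the paper dismisses with ``the result is clear.'' The paper's construction, for its part, is more hands-on and produces the two offending cycles explicitly rather than by a dimension count, but both arguments are sound and of comparable length.
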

\begin{proof}
Suppose that $G$ contradicts the lemma and let $G$ be obtained by splitting the hub of a wheel with rim $R:=x_1,\dots,x_n$. If, in $G$,  $x$ is a common neighbor to $u$ and $v$, then $u,v,x,u$ is disjoint from the rim, a contradiction, therefore $u$ and $v$ has no common neighbor in $G$. This implies that $n\ge 4$. Let us prove that $n=4$. Suppose the contrary. Then we may assume that $u$ has three neighbors $y_1$, $y_2$ and $y_3$ in  this order in a cyclic ordering of $R$ and $v$ has a neighbor $z$ in $R$, say after $y_3$ and before $y_1$ in this same ordering. Let $\alpha$ be the $(y_1,y_2)$-path of $R$ avoiding $z$ and $\beta$ the $(y_3,z)$-path of $R$ avoiding $\alpha$. Now $u,y_1,\alpha,y_2,u$ is edge-disjoint from $v,u,y_3,\beta,z,v$, a contradiction. So, $n=4$. Now the result is clear.
\end{proof}

We say that an $(u,x,y)$-rope-bridge is \defin{strong} if all its steps have length one and no pair of steps have a common endvertex.

\begin{lemma}\label{str-char1}
A graph $R$ with a $3$-subset $\{u,x,y\}\cont V(G)$ is a strong $(u,x,y)$-rope-bridge with ropes $\rho_x$ and $\rho_y$ if and only if 
$R$ has internally disjoint paths $\rho_x:=v_0,\dots,v_n$ and $\rho_y:=w_0,\dots,w_n$, with $v_n=x$, $w_n=y$ and $v_0=w_0=u$ and there is a family $\mathcal{P}$ of pairwise disjoint pairs of consecutive elements of $\{1,\dots,n\}$ such that 
\begin{enumerate}
\item [(a)] $V(R)=V(\rho_x)\cup V(\rho_y)$ and
\item [(b)] $E(R)=\{v_aw_b,v_bw_a:\{a,b\}\in \mathcal P\}\cup\{v_cw_c:c$ is in no member of $\mathcal P\}\cup E(\rho_x)\cup E(\rho_y)$.
\end{enumerate}
\end{lemma}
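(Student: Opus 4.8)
The plan is to prove the two implications separately, the first being a direct verification and the second the substantial part.

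\smallskip
\noindent\emph{From the explicit description to a strong rope-bridge.} I would take $\rho_x$ and $\rho_y$ themselves as ropes and let $\S$ consist of the single-edge path $v_cw_c$ for each index $c$ in no member of $\mathcal P$, together with the two single-edge paths $v_aw_b$ and $v_bw_a$ for each $\{a,b\}\in\mathcal P$. Every member of $\S$ has length one, and since the pairs in $\mathcal P$ are pairwise disjoint, each index $1,\dots,n$ is the $\rho_x$-endvertex of exactly one step and the $\rho_y$-endvertex of exactly one step; hence no two steps share an endvertex and (RB2) holds. Conditions (RB1), (RB4), (RB5) are immediate from (a) and (b), with (RB5) holding vacuously because every edge of $R$ is a rope edge or a step edge; (RB0) is the degree count $d_R(v_i)=3$ for each interior vertex $v_i$ of $\rho_x$ (its two rope neighbours together with either $w_i$ or the other endvertex of its step), and likewise for $\rho_y$; (RB6) holds because $R$ has no edge $uz_i$ with $i\ge 2$; and for (RB3) the only pair of steps that cross are the two steps coming from a common pair $\{a,b\}=\{a,a+1\}\in\mathcal P$, each crossing only the other, since all other steps have both endvertices indexed outside $\{a,a+1\}$ and the steps $v_cw_c$ cross nothing. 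Thus $R$ is a strong $(u,x,y)$-rope-bridge.

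\smallskip
\noindent\emph{Converse.} Let $R$ be a strong $(u,x,y)$-rope-bridge with ropes $\rho_x=v_0,\dots,v_p$ and $\rho_y=w_0,\dots,w_q$, where $v_0=w_0=u$, $v_p=x$, $w_q=y$. Since every step is a single edge it has no inner vertices, so (RB4) forces $V(R)=V(\rho_x)\cup V(\rho_y)$, which is (a). Next I would show that every interior rope vertex lies on exactly one step: an interior vertex has degree at least three by (RB0), two of which come from its rope edges, and by (RB5) its remaining edges are steps or are incident to $u$; as no two steps share an endvertex, it suffices to rule out edges from $u$ to interior rope vertices $z_i$ with $i\ge 2$, which I would do by combining (RB6) (two steps with an extremity in $\int(u,\rho_z,z_i)$ forbid $uz_i$) with the fact that the early rope vertices are forced onto steps. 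Once this is established, the steps define a matching pairing every interior vertex of $\rho_x$ with an interior vertex of $\rho_y$; in particular $p=q=:n$ apart from the endvertices $x,y$, which are handled separately (each either carries a vertical step $v_nw_n=xy$ or is matched to the other end, giving the pair $\{n-1,n\}$). Finally I would read this matching along $\rho_x$, writing $v_i\mapsto w_{\pi(i)}$, and use (RB3): if $\pi$ failed to be order-preserving at two non-consecutive indices, or if it had two inversions that overlap or nest, then some step would cross two others; hence $\pi$ is a product of pairwise disjoint transpositions of consecutive indices. Taking $\mathcal P$ to be the set of these transposed pairs and collecting the resulting edges yields (b).

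\smallskip
I expect the main obstacle to be the middle part of the converse: forcing every interior rope vertex onto a unique step and, in particular, showing $R$ has no chord at $u$, since a priori (RB5) permits such chords and only the combination of (RB6) with the degree hypothesis (RB0) rules them out. An alternative, and possibly cleaner, route to the converse is induction on $|V(R)|$ via Lemma \ref{smaller rope-bridge}: deleting the last vertex of $\rho_y$ together with the interior of the step meeting it (empty, as $R$ is strong) and suppressing the one resulting degree-two vertex on $\rho_x$ produces a smaller strong rope-bridge, to which the inductive hypothesis applies, so that reassembling $R$ reduces to analysing how the final rung attaches to the ropes — which is again governed by (RB3) and (RB6).
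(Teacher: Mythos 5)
Your overall route coincides with the paper's: the forward direction is a routine verification (the paper dismisses it as ``clear''), and for the converse the paper likewise lets the steps pair up the vertices of the two ropes, defines $\mathcal P$ to be the set of crossing pairs, and invokes (RB3) for disjointness and consecutiveness. Where you go further is in explicitly flagging that one must first show every non-rope edge of $R$ is a step --- in particular that $R$ has no chord at $u$ --- a point the paper's proof passes over in silence. You are right that this is the crux, but the resolution you propose does not close it. (RB6) forbids $uz_k$ only when \emph{two} steps have extremities strictly between $u$ and $z_k$ on its rope; for $k=2$ the only such vertex is $z_1$, and in a strong rope-bridge at most one step ends there, so (RB6) says nothing about $uz_2$. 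Indeed nothing in (RB0)--(RB6) excludes such a chord: the graph with ropes $\rho_x=u,v_1,v_2,x$ and $\rho_y=u,w_1,w_2,y$, steps $v_1w_1$, $v_2w_2$, $xy$, and the additional edge $uv_2$ satisfies every clause of the definition of a strong $(u,x,y)$-rope-bridge, yet its edge set is strictly larger than the one prescribed in (b). So the sentence ``which I would do by combining (RB6) with the fact that the early rope vertices are forced onto steps'' is the step that fails, and it cannot be repaired from the axioms alone.

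What this shows is that the gap is in the lemma as a free-standing statement about strong rope-bridges, not merely in your write-up (nor is the paper's one-line converse any better on this point). To make your argument go through you must import the extra hypothesis that is actually available where the lemma is applied, namely condition (b) of Lemma \ref{strong rope-bridge criterion}: in the example above the cycle $u,v_1,v_2,u$ is edge-disjoint from the $(u,y)$-path $\rho_y$, so that condition kills the chord, and an analogous argument forces $x$ and $y$ onto steps (another point where (RB0) alone is insufficient, since it imposes no degree bound on $x$ and $y$). With that repair in place, the remainder of your converse --- the matching $v_i\mapsto w_{\pi(i)}$ and the use of (RB3) to show $\pi$ is a product of pairwise disjoint adjacent transpositions --- is sound and is exactly the paper's argument; your suggested alternative induction via Lemma \ref{smaller rope-bridge} would face the same obstruction at the base of the ropes.
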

\begin{proof}
It is clear that a graph satisfying the given conditions is a strong $(u,x,y)$-rope-bridge with ropes $\rho_x$ and $\rho_y$. Let us prove the converse. As no pair of steps shares the same endvertices, then $\rho_x$ and $\rho_y$ have the same number of vertices and we can label $\rho_x=v_0,\dots,v_n$ and $\rho_y=w_0,\dots,w_n$. We let $\sigma_i$ be the step with extremity in $v_i$. Let $\mathcal P$ be the family of pairs $\{i,j\}$ such that $\sigma_i$ crosses $\sigma_j$. By (RB3), the members of $\mathcal P $ are pairwise disjoint. It also follows from (RB3) that each pair in $\mathcal P$ contains consecutive indices. Analogously, for a pair $\{i,j\}\in \mathcal P$, $\sigma_i$ and $\sigma_j$ also have endvertices that are neighbors in $\rho_y$. This implies the lemma.
\end{proof}

\begin{lemma}\label{strong rope-bridge criterion}
Let $R$ be a connected graph with vertices $u,x,y$ and paths $\rho_x$ and $\rho_y$ from $u$ to $x$ and $y$ respectively, satisfying $V(\rho_y)\cap V(\rho_x)=\{u\}$. Suppose that $d_G(v)\ge 3$ for all $v\in V(R)-\{u,x,y\}$. Then the following assertions are equivalent:
\begin{enumerate}
 \item [(a)] $R$ is a strong $(u,x,y)$-rope-bridge with ropes $\rho_x$ and $\rho_y$.
 \item [(b)] If, for $z\in \{x,y\}$, $C$ is a cycle of $R$, then $R$ has no $(u,z)$-path edge-disjoint from $C$.
\end{enumerate}
\end{lemma}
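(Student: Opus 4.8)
\noindent The plan is to prove the two implications separately: (b)$\Rightarrow$(a) by reducing to Lemma~\ref{rope-bridge criterion} and then upgrading the rope--bridge structure, and (a)$\Rightarrow$(b) by exploiting the explicit ``ladder'' description furnished by Lemma~\ref{str-char1}.

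\medskip
\noindent\emph{Direction (b)$\Rightarrow$(a).} A vertex-disjoint pair consisting of a cycle and a $(u,z)$-path is in particular edge-disjoint, so hypothesis~(b) immediately implies condition~(b) of Lemma~\ref{rope-bridge criterion}. Hence $R$ is an $(u,x,y)$-rope bridge; fix ropes $\rho_x,\rho_y$ and choose a family of steps $\S$ realizing this with $\sum_{\sigma\in\S}\|\sigma\|$ as small as possible, so that no inner vertex of a step lies on a rope and no edge of a step is an edge of a rope. It remains to show that $R$ is \emph{strong}, i.e.\ every step has length one and no two steps share an endvertex. Suppose a step $\sigma$, with endvertices $v^*\in\rho_x$ and $w^*\in\rho_y$, has an inner vertex, and let $p$ be the inner vertex of $\sigma$ adjacent to $v^*$ along $\sigma$. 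Then $d_R(p)\ge 3$ by (RB0), $p$ carries two edges of $\sigma$, and its third edge is, by (RB2), (RB5) and the choice of $\S$, the edge $pu$. Now $u,p,v^*,\rho_x,u$ is a cycle $C$ all of whose edges lie in $\{pu\}\cup E(\sigma)\cup E(\rho_x)$, hence a cycle meeting no edge of $\rho_y$; but then $\rho_y$ is a $(u,y)$-path edge-disjoint from $C$, contradicting~(b). Similarly, if two steps $\sigma\ne\tau$ share an endvertex $v$ lying on, say, $\rho_x$, then $\sigma\cup\tau$ together with the subpath of $\rho_y$ joining the other two endvertices is a cycle meeting no edge of $\rho_x$, and $\rho_x$ is then a $(u,x)$-path edge-disjoint from it --- again contradicting~(b). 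Therefore $R$ is a strong rope bridge.

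\medskip
\noindent\emph{Direction (a)$\Rightarrow$(b).} By Lemma~\ref{str-char1} write $\rho_x=v_0,\dots,v_n$ and $\rho_y=w_0,\dots,w_n$ with $v_0=w_0=u$, $v_n=x$, $w_n=y$, the step-edges being $v_cw_c$ for each $c$ in no member of $\mathcal P$ and $v_aw_b,\,v_bw_a$ for each $\{a,b\}\in\mathcal P$. For $1\le i\le n$ put $A_i=\{v_0,\dots,v_{i-1},w_0,\dots,w_{i-1}\}$, $B_i=V(R)\del A_i$, and let $\partial_i$ be the set of edges of $R$ between $A_i$ and $B_i$; a short inspection gives $\partial_i=\{v_{i-1}v_i,\,w_{i-1}w_i,\,v_{i-1}w_i,\,v_iw_{i-1}\}$ when $\{i-1,i\}\in\mathcal P$ and $\partial_i=\{v_{i-1}v_i,\,w_{i-1}w_i\}$ otherwise. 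Assume, for contradiction, that a cycle $C$ and a $(u,x)$-path $P$ of $R$ are edge-disjoint (the $(u,y)$-case is symmetric under $v_i\leftrightarrow w_i$, $x\leftrightarrow y$). Since $u\in A_i$ and $x\in B_i$ for every $i$, the path $P$ meets each $\partial_i$ in an odd number of edges, while the cycle $C$ meets each $\partial_i$ in an even number. At a two-edge cut this forces $P$ to use exactly one of its two edges and $C$, being edge-disjoint from $P$, to use neither. Consequently, at every index $c$ lying in no member of $\mathcal P$, the only edge of $R$ at $v_c$ (resp.\ at $w_c$) that $C$ could use is $v_cw_c$, so $C$ omits $v_c$ and $w_c$, and hence also omits $v_cw_c$; and $C$ uses no rope-edge lying in a two-edge cut. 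Thus $E(C)$ is contained in the union, over $\{a,b\}\in\mathcal P$, of the four-cycles $v_a,v_b,w_a,w_b,v_a$, and since $R$ has no edge between the vertex sets of two distinct members of $\mathcal P$ that $C$ could traverse, $C$ equals one such four-cycle, so $E(C)=\partial_j$ with $j=\max\{a,b\}$. But then $P$, meeting $\partial_j$ in an odd number of edges, uses an edge of $C$, a contradiction.

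\medskip
\noindent The genuinely new ingredient is the edge-cut/parity argument used for (a)$\Rightarrow$(b). The step I expect to require the most care is the ``upgrade to strong'' in (b)$\Rightarrow$(a): one must check that \emph{each} way a step could fail to be a single non-repeated edge --- an inner vertex off the ropes, an inner vertex on a rope, a step-edge sharing ends with a rope-edge, or two steps meeting --- really does produce a cycle avoiding every edge of one of the two ropes. Choosing the step family of minimum total length, together with (RB0) and (RB5), is the bookkeeping device that reduces all of these to the two cases displayed above.
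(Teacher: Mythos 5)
Your proof is correct and follows essentially the same route as the paper: for (b)$\Rightarrow$(a) the paper likewise invokes Lemma~\ref{rope-bridge criterion} and then kills a long step or a shared step-endvertex by exhibiting a cycle edge-disjoint from one rope while the other rope serves as the $(u,z)$-path. For (a)$\Rightarrow$(b) the paper merely writes ``it follows from Lemma~\ref{str-char1}''; your edge-cut parity argument is a valid (and welcome) way of filling in that omitted verification.
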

\begin{proof} It follows from Lemma \ref{str-char1} that (a) implies (b). Suppose (b). This implies item (b) of Lemma \ref{rope-bridge criterion}. So, $R$ is a $(u,x,y)$-rope-bridge with ropes $\rho_x$ and $\rho_y$. Suppose that $R$ is not strong. Then, some step $\sigma$ has an inner vertex $z$ or two steps $\alpha$ and $\beta$ have a common end-vertex. In the former case $\rho_x$ is edge-disjoint from the cycle $u,z,\sigma,y_\sigma,\rho_y,u$, a contradiction. In the later case, we may assume that $x_\alpha=x_\beta$. This implies that $\rho_x$ is edge-disjoint from the cycle $x_\beta,\beta,y_\beta,\rho_y,y_\alpha,\alpha,x_\alpha$, a contradiction again. So, (a) holds.
\end{proof}

\begin{proofof}\emph{Proof of Theorem \ref{main-strong}: } First note that all graphs described in the theorem are strongly $e$-Dirac. Let us prove the converse. Let $G$ be a strongly $e$-Dirac graph. If $G$ is a type (a) or (b) $e$-Dirac graph as in Theorem \ref{main}, then the result follows from Lemmas \ref{str type a} and \ref{str type b} respectively. Assume that $G$ is a type (c) $e$-Dirac graph. If $|G|\le 5$, the result may be verified directly. Thus we may assume that $|G|\ge 6$ and, by Lemma \ref{complete}, the assumptions for the previous results are valid. So, item (c) of Theorem \ref{main-cor} holds and we have the hypothesis of Lemma \ref{components are rope-bridges} holding, consider the terminologies as in that lemma. As $|G|\ge 6$ we may assume the existence of some $u_j$-component. 

Let $K_1,\dots, K_m$ be the $u_j$-components of $G$. For each $z\in\{x,y\}$ there is a $(z,u_i)$-path $\alpha_z$ with all internal vertices out of $K_1\cup\cdots\cup K_m\cup S^+$.

If item (b) of Lemma \ref{components are rope-bridges} holds then, $R:=K_1+\{u_j,x,y\}$ is an $(u_j,x,y)$-rope bridge. If for some cycle $C$ of $R$ and $z\in\{x,y\}$, there is an $(u_j,z)$-path $\beta$ edge-disjoint from $C$, then $C$ is edge disjoint from a cycle of the form $u_i,u_j,\beta,z,\alpha_z,u_i$, a contradiction. So, there are no such path and cycle. Thus, by Lemma \ref{strong rope-bridge criterion}, $R$ is a strong $(u_j,x,y)$-rope bridge. 

By Lemma \ref{str-char1}, this implies the theorem if all $\{i,j\}=\{1,2\}$ satisfies items (a) or (b) of Lemma \ref{components are rope-bridges}. So, we may assume that item (c) holds for $(i,j)=(1,2)$.

If for some $z\in\{x,y\}$, there is an $(u_2,z)$-path $\gamma$ whose internal vertices are in an $u_j$-component other than $K_1$ and $K_2$, then $u_1,u_2,\gamma,z,\alpha_z,u_1$ is edge-disjoint from a cycle of the form $x,K_1,y,K_2,x$, a contradiction. So, $m=2$ and $u_2x,u_2y\notin E(G)$.

If $u_2$ has two neighbors in a same $u_2$-component, say $K_2$, then $K_2+u_2$ has a cycle which is edge-disjoint from a cycle of the form $u_1,u_2,K_1,x,\alpha_x,u_1$; a contradiction. So, $u_2$ has an unique neighbor in each $u_2$-component. By item (c) of Lemma \ref{components are rope-bridges}, $|K_1|=|K_2|=1$.

If there is no $u_1$-component, either $G\cong K_{3,3}$  and the theorem holds or $xy\in E(G)$ and cycles of the form $x,y,K_1,x$ and $u_1,u_2,k_2,x,u_1$ are edge-disjoint, a contradiction. So, we may assume that $J$ is a $u_1$-component. If there is a second $u_1$-component $J'$, then  cycles of the form $u_1,u_2,K_1,x,J,u_1$ and $x,K_2,y,J',x$ are edge-disjoint. Thus, $J$ is the unique $u_1$-component. Hence, item (b) of Lemma \ref{components are rope-bridges} holds for $(i,j)=(2,1)$. If $R$ has a cycle $C$ and aj $(u_1,z)$-path $\gamma$ disjoint from $C$ for some $\in \{x,y\}$, then a cycle of the from $u_1,\gamma,z,K_1,u_2,u_1$ is edge-disjoint from $C$, a contradiction. By Lemma \ref{strong rope-bridge criterion}, $R$ is a strong $(u_1,x,y)$-rope bridge. If $xy\in E(G)$, then cycles of the form $u_1,u_2,K_1,x,J,u_1$ and $x,y,K_2,x$ are edges-disjoint. so $xy\notin E(G)$. Consider labels for the vertices of $R$ like in Lemma \ref{str-char1}. We let $V(K_1):=\{v_{n+1}\}$ and $V(K_2):=\{w_{n+1}\}$. Now it is straightforward to check that $G$ is in the format described in the theorem. \end{proofof}

\begin{proofof}\emph{Proof of Theorem \ref{main-strong-2con}: } Consider a graph $G$ as described in the theorem. Note that all cycles not contained in one of the $G_i$'s are the cycles containing $U$, which are exactly the cycles containing $e$. As each $G_i$ is $u_{i-1}u_i$-Dirac, it follows that $G$ is $e$-Dirac. For the converse, suppose for a contradiction that $G$ is a $2$-connected strongly $e$-Dirac graph not fitting into the description of the theorem. 

The $3$-connected strongly $e$-Dirac graphs, described in Theorem \ref{main-strong}, fit into the description in this theorem. So $G$ is not $3$-connected. As the theorem also holds if $|G|\le 3$, then $G$ has a $2$-vertex-cut $\{x,y\}$. This implies that we may write $G$ as the union of two graphs $H$ and $K$ such that $|H|,|K|\ge 3$, $V(H)\cap V(K)=\{x,y\}$, $H+xy$ and $K+xy$ are $2$-connected, $e\in V(H)$ and $xy\notin E(K)$. 

Let us check that $H+xy$ is a strongly $e$-Dirac graph. Suppose for a contradiction that $H+xy$ has a pair of edge-disjoint cycles $(D_1,D_2)$ with $e\in D_1$. Then for some $i\in \{1,2\}$, $D_i$ is not a cycle of $G$. So, $xy\in E(D_i)-E(H)$. Let $D$ be a cycle of $K+xy$ containing $xy$. Now $(D_i\cup D)\del xy$ and $D_{3-i}$ are disjoint cycles of $G$ whose union contains $e$, contradicting the fact that $G$ is strongly $e$-Dirac. So $H+xy$ is a $2$-connected strongly $e$-Dirac graph. As $|H|<|G|$, the theorem holds for $H+xy$. Consider, for $H+xy$, graphs $G'_1,\dots,G'_{n'}$ and vertices $u'_0,\dots,u'_{n'}$ as in the theorem, with $e=u'_0u'_{n'}$.

As $K+xy$ is $2$-connected and $xy\notin E(K)$, then either $K$ has a cycle or $K$ is an $xy$-path. In the later case, $G$ is isomorphic to a subdivision of $H$ and as, the theorem holds for $H$, it is straighforward to verify that it also holds for $G$, so $K$ has a cycle $C_K$.

If $H$ has a cycle $C_H$ containing $e$, then $C_H$ and $C_K$ are edge-disjoint cycles of $G$, a contradiction. So $e$ is in no cycle of $H$. This implies that $xy\notin E(H)$ and $\{e,xy\}$ is an edge-cut of $H+xy$ since $H+xy$ is $2$-connected. By the description of $H+xy$ as in the theorem, $\{x,y\}=\{u'_{i-1},u'_i\}$ for some index $i\in \{1,\dots,n'\}$ such that $V(G'_i)=\{u'_{i-1},u'_i\}$. We may assume without loss of generality that $(x,y)=(u'_{i-1},u'_i)$.

Let us check that $K+xy$ is strongly $xy$-Dirac. Suppose for a contradiction that $K+xy$ has a pair of edge-disjoint circuits $(C_1,C_2)$ with $xy\in E(C_1)$. Then for a circuit $C$ of $H+xy$ with $e,xy\in E(C)$, $((C\cup C_1)\del xy,C_2)$ is a pair of edge-disjoint cycles of $G$ whose union contains $e$, a contradiction. So $K+xy$ is a $2$-connected strongly $xy$-Dirac graph. As $|K|<|G|$, we may apply the theorem in $K+xy$ in respect to the edge $xy$. Consider, for $K+xy$, graphs $G''_1,\dots,G''_{n''}$ and vertices $u''_0,\dots,u''_{n''}$ as in the theorem with $(x,y)=(u''_0,u''_{n''})$.

Now the graphs 
\[(G_1,\dots,G_n):=(G'_1,\dots,G'_{i-1},G''_1,\dots,G''_{n''},G'_{i+1},\dots,G'_{n'})\] and vertices 
\[(u_0,\dots,u_n):=(u'_0,\dots,u'_{i-1},u''_1,\dots,u''_{n''-1},u'_{i},\dots,u'_{n'})\] give a description of $G$ according to the theorem.
\end{proofof}

\section{Prism-Minors}\label{sec-prism}

In this section we prove Theorem \ref{thm-prism}. The theorem  follows straightforwardly from Lemmas \ref{prism a}, \ref{prism b} and \ref{prism c}.

If $G$ is a graph with a subgraph $H'$ isomorphic to the subdivision of a graph $H$, we say that and $H$-minor of $H'$ is an \defin{$H$-topological minor} of $G$. If $G$ has an $H$-topological minor using an edge $e$, then it is clear that $G$ has an $H$-minor using $e$. The converse does not hold in general, but it is easy to verify that it is true provided $G$ and $H$ are $3$-connected and $H$ is cubic, which is the case in our concern: when $H$ is the prism and $G$ is $3$-connected. We will use this fact with no mentions. 

Let $G$ be a $3$-connected graph with an edge $e$. By Menger's Theorem, $G$ is not $e$-Dirac if and only if $G$ has a prism-minor $H$ using $e$ as an edge in a triangle of $H$. So, our problem lies within the class of $e$-Dirac graphs. Moreover, the following lemma is valid. 

\begin{lemma}\label{prism-menger}
A $3$-connected $e$-Dirac graph has a prism-minor using $e$ if and only if it has vertex-disjoint cycles $C$ and $D$ and three vertex-disjoint $(V(C),V(D))$-paths $\alpha_1$, $\alpha_2$, and $\alpha_3$ such that $e\in E(\alpha_3)$.
\end{lemma}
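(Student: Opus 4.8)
The plan is to read the statement directly off the definition of a prism subdivision, using the $e$-Dirac hypothesis only to locate the edge $e$. Recall from the paragraph preceding the lemma that, because $G$ is $3$-connected and the prism is cubic, $G$ has a prism-minor using $e$ if and only if $G$ has a subgraph $H$ that is a subdivision of the prism with $e\in E(H)$. Writing the prism as two vertex-disjoint triangles joined by a perfect matching of three edges, such an $H$ is exactly the union of two vertex-disjoint cycles $C$ and $D$ (the subdivided triangles) together with three paths $\alpha_1,\alpha_2,\alpha_3$ (the subdivided matching edges) that are pairwise vertex-disjoint, each having its first vertex on $C$, its last vertex on $D$, and no other vertex on $C\cup D$; that is, the $\alpha_i$ are pairwise vertex-disjoint $(V(C),V(D))$-paths. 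I will use this description in both directions.

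For the forward implication I would take such an $H=C\cup D\cup\alpha_1\cup\alpha_2\cup\alpha_3$ with $e\in E(H)$ and observe that $e$ lies on $C$, on $D$, or on one of the $\alpha_i$. The first two cases are impossible: if $e\in E(C)$ then $C$ and $D$ are vertex-disjoint cycles whose union contains $e$, contradicting that $G$ is $e$-Dirac, and symmetrically for $e\in E(D)$. Hence $e$ lies on some connecting path, and after relabelling the $\alpha_i$ we obtain $e\in E(\alpha_3)$, which is the desired configuration.

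For the converse I would start from vertex-disjoint cycles $C,D$ and pairwise vertex-disjoint $(V(C),V(D))$-paths $\alpha_1,\alpha_2,\alpha_3$ with $e\in E(\alpha_3)$, let $a_i$ and $b_i$ be the endvertices of $\alpha_i$ on $C$ and on $D$ respectively, and note that vertex-disjointness of the $\alpha_i$ forces $a_1,a_2,a_3$ to be three distinct vertices of $C$ and $b_1,b_2,b_3$ to be three distinct vertices of $D$. In the subgraph $H:=C\cup D\cup\alpha_1\cup\alpha_2\cup\alpha_3$ the six vertices $a_1,a_2,a_3,b_1,b_2,b_3$ have degree $3$ while every other vertex has degree $2$, so suppressing the degree-$2$ vertices of $H$ yields a triangle on $\{a_1,a_2,a_3\}$, a triangle on $\{b_1,b_2,b_3\}$, and the three edges $a_1b_1,a_2b_2,a_3b_3$; thus $H$ is a subdivision of the prism. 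Since $e\in E(\alpha_3)\cont E(H)$, this exhibits a prism topological minor of $G$ using $e$, and hence a prism-minor of $G$ using $e$.

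The whole argument is just bookkeeping with the definition of a prism subdivision together with the trivial observation that an $e$-Dirac graph cannot have $e$ on a cycle disjoint from another cycle; the only place that deserves a moment's care is the passage between ``prism-minor using $e$'' and ``prism topological minor using $e$'', but this is precisely the fact recorded for $3$-connected $G$ and cubic $H$ in the preceding paragraph, so I do not expect any genuine obstacle.
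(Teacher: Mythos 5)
Your proposal is correct and matches the paper's intent: the paper states this lemma without proof, treating it as an immediate consequence of the prism-subdivision description (via the topological-minor equivalence for cubic $H$ and $3$-connected $G$) together with the observation that the $e$-Dirac condition forbids $e$ from lying on either subdivided triangle. Your write-up simply supplies the routine details the authors omitted, so there is nothing to add.
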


The next two lemmas proves Theorem \ref{thm-prism} for $e$-Dirac graphs of types (a) and (b).

\begin{lemma}\label{prism a}
If $G$ is a type (a) $e$-Dirac graph, then $G$ has a prism-minor using $e$.
\end{lemma}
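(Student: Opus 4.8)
The plan is to reduce to Lemma~\ref{prism-menger}: since a type (a) graph is $3$-connected and $e$-Dirac, it suffices to produce in $G$ two vertex-disjoint cycles $C,D$ together with three pairwise vertex-disjoint $(V(C),V(D))$-paths, one of which contains $e$. Recall from the description of type (a) graphs in Theorem~\ref{main} that $G$ is obtained from a wheel $W_n$ with $n\ge 4$ and rim $R=x_1x_2\cdots x_nx_1$ by doubling some spokes and then splitting the hub into $u_1,u_2$ along $e=u_1u_2$; after the split the rim $R$ is unchanged, and each rim vertex is adjacent to $u_1$, to $u_2$, or (exactly for a doubled spoke) to both. Put $A:=N_G(u_1)\cap V(R)$ and $B:=N_G(u_2)\cap V(R)$, so $A\cup B=V(R)$, and since $e=u_1u_2$ while both $u_1,u_2$ have degree at least four in $G$, we get $|A|\ge 3$ and $|B|\ge 3$.

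First I carry out the construction, \emph{assuming} I can find two vertex-disjoint subpaths (arcs) $P$ and $Q$ of the rim $R$, each with at least two vertices, such that both ends of $P$ lie in $A$ and both ends of $Q$ lie in $B$. Let $C$ be the cycle formed by $P$, the vertex $u_1$, and the two spokes from $u_1$ to the ends of $P$, and let $D$ be defined analogously from $Q$ and $u_2$. Then $C$ and $D$ are vertex-disjoint, since $V(C)=V(P)\cup\{u_1\}$ and $V(D)=V(Q)\cup\{u_2\}$ while $V(P)\cap V(Q)=\emptyset$ and $u_1,u_2\notin V(R)$. Deleting $V(P)\cup V(Q)$ from the cycle $R$ leaves two (possibly empty) arcs $R_1,R_2$. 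I take as the three required paths: the edge $e$ (from $u_1\in V(C)$ to $u_2\in V(D)$); the path formed by $R_1$ together with the rim edges joining its two ends to $P$ and to $Q$ (if $R_1$ is empty this degenerates to the single rim edge joining the appropriate end of $P$ to the appropriate end of $Q$); and the analogous path built from $R_2$. As $P$ and $Q$ each have two distinct ends and $V(R_1)\cap V(R_2)=\emptyset$, these three are pairwise vertex-disjoint $(V(C),V(D))$-paths with $e$ on one of them, so Lemma~\ref{prism-menger} yields a prism-minor of $G$ using $e$.

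The remaining task, and the only genuine difficulty, is to produce the arcs $P$ and $Q$. Let $P_1,\dots,P_k$ with $k=|A|\ge 3$ be the arcs of $R$ whose ends are cyclically consecutive elements of $A$: they cover $R$, overlap only in their ends, each has at least two vertices, and the interior of each $P_i$ contains no element of $A$, hence is contained in $B$. Define $Q_1,\dots,Q_m$ with $m=|B|\ge 3$ symmetrically, with interiors contained in $A$. Thus the interiors of all the $P_i$ and all the $Q_j$ are mutually disjoint, so $V(P_i)\cap V(Q_j)\subseteq\{p_i,p_i'\}$ for all $i,j$, where $p_i,p_i'$ denote the two ends of $P_i$. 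If some $P_i$ and $Q_j$ are vertex-disjoint we are done, so suppose every $P_i$ meets every $Q_j$. Each vertex of $R$ lies in at most two of the $Q_j$ (in the two incident arcs if it is in $B$, in a single arc otherwise), so $P_1$, meeting every $Q_j$ only through its two ends, forces $m\le 4$, and symmetrically $k\le 4$. A short case analysis in the surviving cases $k,m\in\{3,4\}$ then finishes: the two ends of each $P_i$ must jointly lie in all $m$ of the $Q_j$, while an end lying in two distinct $Q_j$ must be the uniquely determined common vertex of those two consecutive arcs; tracing this around the cyclic lists of $A$ and $B$ forces two consecutive elements of $A$ to coincide, a contradiction.

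In short: reduce via Lemma~\ref{prism-menger}, build $C$ and $D$ as "fans" at $u_1$ and $u_2$ over two disjoint rim-arcs and link them by $e$ and the two leftover rim-arcs, and then establish the purely combinatorial fact that two subsets of a cycle, each of size at least three and with union the whole cycle, always admit two disjoint arcs with at least two vertices apiece whose endpoint sets lie in the respective subsets. I expect that last combinatorial lemma to be where essentially all the care is needed; the rest is straightforward bookkeeping.
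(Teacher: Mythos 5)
Your reduction to Lemma~\ref{prism-menger} and the construction of the two fan-cycles $C$ and $D$ over disjoint rim-arcs $P$ and $Q$, linked by $e$ and the two leftover rim-arcs, are correct, and the combinatorial statement to which you reduce everything is in fact true. The gap is in your proof of that statement. The assertion ``$V(P_i)\cap V(Q_j)\subseteq\{p_i,p_i'\}$'' does not follow from the disjointness of the interiors: the interior of $P_i$ consists of vertices of $B$ (as you note yourself), and every vertex of $B$ is an \emph{endpoint} of two of the arcs $Q_j$, so some $Q_j$ may meet $P_i$ in an interior vertex of $P_i$. For instance, on the $6$-cycle $x_1x_2\cdots x_6$ with $A=\{x_1,x_3,x_5\}$ and $B=\{x_2,x_4,x_6\}$ one has $V(P_1)\cap V(Q_1)=\{x_2,x_3\}$ for $P_1=x_1x_2x_3$ and $Q_1=x_2x_3x_4$, and $x_2$ is interior to $P_1$. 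Consequently the claim that ``$P_1$ meets every $Q_j$ only through its two ends'' is unjustified, the bound $m\le 4$ does not follow (the corrected double count $\sum_{i,j}\bigl|V(P_i)\cap V(Q_j)\bigr|=2n+2|A\cap B|=2(k+m)$ yields only $(k-2)(m-2)\le 4$, which admits, say, $k=3$ and $m=6$), and the concluding ``short case analysis'' --- which again presumes that all intersections occur at ends of the $P_i$ and which you do not actually carry out --- would have to handle these further configurations.

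The lemma you need can be rescued by a different count. If every arc $I$ between two cyclically consecutive elements of $A$ had at most one vertex of $B$ in its complementary arc, then $|V(I)\cap B|\ge |B|-1$ for each of the $k=|A|\ge 3$ such arcs, while each vertex of $B$ lies on at most two of them; hence $k(|B|-1)\le 2|B|$, forcing $|B|=3$, $k=3$ and $B\subseteq A$, whence $n=|A\cup B|=3$, contradicting $n\ge 4$. So some such arc $I$ has at least two vertices of $B$ in its complementary arc $J$; take $P:=I$ and $Q:=$ the minimal sub-arc of $J$ spanning $J\cap B$. With this repair your argument is complete, and it is genuinely different from the paper's: there the prism subdivision is built directly by choosing $\{a,b\}\subseteq N_G(u)-v$ minimizing $|\{a,b\}\cap N_G(v)|$ and locating neighbors $c,d$ of $v$ so that $a,b,c,d$ occur in this cyclic order on the rim, which produces the same two disjoint arcs with far less machinery (plus a separate check for $n=4$). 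Your route buys a cleaner reusable combinatorial statement, but as written its proof does not go through.
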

\begin{proof}
For some $n\ge 4$, we may assume that $G$ may be obtained from $W_n$ by possibly doubling some spokes and splitting the hub into the edge $e=uv$, where $u$ and $v$ has degree at least four. Say that $|N_G(v)|\ge|N_G(u)|$. Choose $\{a,b\}\cont N_G(u)-v$ minimizing the number of neighbors of $v$ in $\{a,b\}$. 

If $n\ge 5$, by the minimality of $|\{a,b\}\cap N_G(v)|$ and as $|N_G(v)|\ge|N_G(u)|\ge 4$, there are at least three neighbors of $v$ out of $\{a,b,u\}$. So $v$ have neighbors $c$ and $d$ with the property that $a$, $b$, $c$ and $d$ appear in this order in some cycle ordering of the cycle $G-\{u,v\}$. This implies that $G$ has a subdivision of the prism containing $e$ and this implies the lemma.

So we may assume that $n=4$. Recall that, by the description of the type (a) graphs, $d_G(u)=d_G(v)\ge4$. Let $a$, $b$, $c$, and  $d$ be a cycle ordering of $G-\{u,v\}$ with the property that $b$, $c$, and $d$ are neighbors of $v$. Either $b$ or $d$ is a neighbor of $u$, we may assume it is $b$ as swapping the labels just inverts the cycle ordering. Now $\{u,a,b\}$ and $\{v,c,d\}$ induces triangles in $G$. But $ad$, $bc$ and $e=uv$ are edges of $G$. So $e$ is in a prism-minor of $G$. This finishes the proof.
\end{proof}

\begin{lemma}\label{prism b}
If $G$ is a type (b) $e$-Dirac graph, then $G$ has no prism-minor using $e$.
\end{lemma}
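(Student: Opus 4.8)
The plan is to argue by contradiction, exploiting the fact that for a type~(b) graph the graph $G\setminus\{u,v\}$ is a tree. Write $e=uv$ and let $w$ be the vertex provided by the definition of type~(b): $\{u,v,w\}$ is a vertex-cut of $G$ and every component $K$ of $G\setminus\{u,v,w\}$ is a tree meeting $N_G(w)$ in a single vertex $x_K$. The first step is to note that, since $G$ has at least six vertices, $G\setminus\{u,v,w\}$ has components $K_1,\dots,K_r$ with $r\ge 1$, and that $T:=G\setminus\{u,v\}$ is obtained from the disjoint trees $K_1,\dots,K_r$ by adding $w$ together with the single edge $wx_{K_i}$ into each $K_i$; hence $T$ is a tree. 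In particular every cycle of $G$ contains $u$ or $v$.

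Next I would apply Lemma~\ref{prism-menger}. Type~(b) graphs are $3$-connected and $e$-Dirac, so a prism-minor of $G$ using $e$ would yield vertex-disjoint cycles $C$ and $D$ and pairwise vertex-disjoint $(V(C),V(D))$-paths $\alpha_1,\alpha_2,\alpha_3$ with $e\in E(\alpha_3)$. A short argument then pins down this configuration. Each of $C$ and $D$ contains $u$ or $v$, and since $C$ and $D$ are vertex-disjoint we may assume, after possibly swapping their names, that $u\in V(C)$, $v\in V(D)$, $u\notin V(D)$ and $v\notin V(C)$. As $\alpha_3$ contains the edge $uv$ but meets $C$ (respectively $D$) only in an endpoint, $u$ must be the endpoint of $\alpha_3$ on $C$ and $v$ its endpoint on $D$; a simple path from $u$ to $v$ that uses the edge $uv$ is that edge, so $\alpha_3=uv$ and $V(\alpha_3)=\{u,v\}$. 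Consequently $\alpha_1$ and $\alpha_2$, being vertex-disjoint from $\alpha_3$, avoid $u$ and $v$, so they are paths of $T$ whose ends lie on the vertex-disjoint subtrees $P_1:=C-u$ and $P_2:=D-v$ of $T$.

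The contradiction then follows from an elementary property of trees: if $P_1$ and $P_2$ are vertex-disjoint subtrees of a tree $T$, then the vertex $a$ of $P_1$ nearest to $P_2$ lies on every $(P_1,P_2)$-path of $T$. Indeed, if $R$ is the path of $T$ joining $P_1$ to $P_2$ that is internally disjoint from $P_1\cup P_2$ and $a$ is its end in $P_1$, then deleting the first edge of $R$ disconnects $P_1$ from $P_2$ in $T$, so every $(P_1,P_2)$-path must pass through $a$. Applying this to $\alpha_1$ and $\alpha_2$ gives $a\in V(\alpha_1)\cap V(\alpha_2)$, contradicting that they are vertex-disjoint. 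I expect the one point requiring a little care to be the bookkeeping that forces $\alpha_3=e$ — and hence traps $\alpha_1$ and $\alpha_2$ inside $T$ — since everything after that is immediate; note also that no separate discussion of ``$e$ lying in a triangle of the prism'' is needed, because that possibility is already excluded by Lemma~\ref{prism-menger}.
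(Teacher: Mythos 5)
Your proof is correct and follows essentially the same route as the paper's: both reduce via Lemma~\ref{prism-menger} to the configuration $C,D,\alpha_1,\alpha_2,\alpha_3$, force $u\in V(C)$ and $v\in V(D)$, and then derive a contradiction from the fact that $G\setminus\{u,v\}$ contains no cycle. The paper phrases the last step by exhibiting the cycle $c_1,C-u,c_2,\alpha_2,d_2,D-v,d_1,\alpha_1,c_1$ avoiding $\{u,v\}$ and showing it would give $w$ two neighbours in a single component of $G\setminus\{u,v,w\}$, whereas you observe directly that $T=G\setminus\{u,v\}$ is a tree and that two vertex-disjoint $(C-u,D-v)$-paths cannot exist there — the same contradiction in a slightly tidier package.
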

\begin{proof}
For $e=uv$ and some vertex $w$, $A:=\{u,v,w\}$ is a vertex-cut of $G$ and each connected component of $G-A$ is a tree with an unique neighbor of $w$. Suppose for a contradiction that there is a prism-minor of $G$ using $e$. By lemma \ref{prism-menger}, $G$ has vertex disjoint cycles $C$ and $D$ and vertex-disjoint $(V(C),V(D))$-paths $\alpha_1$, $\alpha_2$ and $\alpha_3$ with $e\in E(\alpha_3)$. 

As the components of $G-A$ are trees, both $C$ and $D$ meet $A$. Since $w$ has an unique neighbor in each connected component of $G-A$, then it is not possible that $V(C)\cap A=\{w\}$ or $V(D)\cap A=\{w\}$. So, we may assume that $u\in V(C)$ and $v\in V(D)$.

For $i=1,2$, let $c_i$ and $d_i$ be the endvertices of $\alpha_i$ in $C$ and $D$ respectively. Consider the cycle:
\[C'=c_1,C-u,c_2,\alpha_2,d_2,D-v,d_1,\alpha_1,c_1.\]
As the connected components of $G-A$ are trees, $C'$ meets $A$. But $u,v\notin V(C')$. Hence $V(C')\cap A=\{w\}$. So $V(C')-w$ is entirely contained in a connected component of $G-A$, which, therefore, contains the two neighbors of $w$ in $C'$, a contradiction. 
\end{proof}


\begin{lemma}\label{prism-creterion}
Suppose that $G$ is a $3$-connected graph, $uv$ is an edge of $G$ and $G-\{u,v\}$ is $2$-connected. Then $G$ has a prism-minor containing $uv$ if and only if $G$ has vertex-disjoint cycles $C$ and $D$ such that $\{u,v\}\cont V(C)\cup V(D)$.
\end{lemma}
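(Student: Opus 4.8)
The plan is to establish both implications, after first disposing of the case that $G$ is not $e$-Dirac (here $e=uv$). If $G$ is not $e$-Dirac, then $G$ has a pair of vertex-disjoint cycles whose union contains the edge $uv$; one of the two, say $C$, then contains $uv$, so together with the other cycle $D$ we get vertex-disjoint cycles with $\{u,v\}\subseteq V(C)\cup V(D)$, and at the same time $G$ has a prism-minor using $uv$ by the equivalence recorded just before Lemma~\ref{prism-menger}. Hence both sides of the asserted equivalence hold in that case, and from now on the plan is to assume that $G$ is $e$-Dirac.

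For the direction ``vertex-disjoint cycles $\Rightarrow$ prism-minor'', suppose $G$ has vertex-disjoint cycles $C,D$ with $\{u,v\}\subseteq V(C)\cup V(D)$. As $C$ and $D$ are disjoint, each of $u,v$ lies in exactly one of them; moreover $u$ and $v$ cannot both lie in $V(C)$, since otherwise (taking $C$ itself if $uv\in E(C)$, and $P+uv$ for a $u,v$-path $P$ of $C$ otherwise) we would get a cycle through $uv$ inside $V(C)$, disjoint from $D$, contradicting $e$-Diracness; symmetrically they cannot both lie in $V(D)$. So, after possibly interchanging $C$ and $D$, we have $u\in V(C)$ and $v\in V(D)$. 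Now $V(C)-u$ and $V(D)-v$ are disjoint connected subgraphs of the $2$-connected graph $G-\{u,v\}$, each with at least two vertices because $|C|,|D|\ge 3$; deleting one vertex from a $2$-connected graph leaves it connected and leaves both of these sets non-empty, so no single vertex separates them, and Menger's Theorem gives two vertex-disjoint $(V(C)-u,V(D)-v)$-paths in $G-\{u,v\}$. Truncating each to its last vertex on $C$ and the first subsequent vertex on $D$ yields vertex-disjoint $(V(C),V(D))$-paths $\alpha_1,\alpha_2$ that avoid $u$ and $v$; together with $\alpha_3:=uv$, the data $C,D,\alpha_1,\alpha_2,\alpha_3$ meet the hypothesis of Lemma~\ref{prism-menger}, so $G$ has a prism-minor using $uv$.

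For the converse, Lemma~\ref{prism-menger} gives vertex-disjoint cycles $C,D$ and vertex-disjoint $(V(C),V(D))$-paths $\alpha_1,\alpha_2,\alpha_3$ with $uv\in E(\alpha_3)$; the plan is to choose such a configuration with $|V(\alpha_3)|$ as small as possible. Writing $c_3,d_3$ for the ends of $\alpha_3$ on $C$ and $D$, it suffices to show $\alpha_3$ is just the edge $uv$, since then $u=c_3\in V(C)$ and $v=d_3\in V(D)$, giving $\{u,v\}\subseteq V(C)\cup V(D)$. If not, then $u\ne c_3$ or $v\ne d_3$; say $u\ne c_3$, so $u$ is an interior vertex of $\alpha_3$, hence $d_G(u)\ge 3$, and $u$ has a neighbour $r$ other than its two neighbours on $\alpha_3$. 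The plan is to use the location of $r$ together with the $2$-connectivity of $G-\{u,v\}$ to modify the configuration into one with a strictly shorter $\alpha_3$ still containing the edge $uv$ --- shortcutting $\alpha_3$ along a chord $ur$ when $r$ lies on $\alpha_3$ between $c_3$ and $u$, and otherwise rerouting $C$ (and, if necessary, the paths $\alpha_1,\alpha_2$) through the discarded segment of $\alpha_3$ so that $u$ becomes an end of the matching path --- which contradicts minimality.

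I expect this rerouting to be the main obstacle: converting the prism-minor produced by Lemma~\ref{prism-menger} into one in which $uv$ is exactly a matching edge genuinely requires both hypotheses, namely the $3$-connectivity of $G$ (to supply the extra neighbour $r$ at interior vertices of $\alpha_3$) and the $2$-connectivity of $G-\{u,v\}$ (to reattach $C$ to $\alpha_1,\alpha_2$ and to route replacement segments avoiding $u$ and $v$ after the modification); manipulating the prism-subdivision by itself does not suffice, since in a subdivided prism an edge $uv$ interior to a matching path can be impossible to move without appealing to such extra structure.
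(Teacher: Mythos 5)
Your forward direction is correct and is essentially the paper's argument: you dispose of the non-$e$-Dirac case up front, observe that in the $e$-Dirac case $u$ and $v$ must lie in different cycles, and then apply Menger's Theorem to the $2$-connected graph $G-\{u,v\}$ to produce two disjoint $(V(C),V(D))$-paths which, together with the edge $uv$ itself, feed into Lemma~\ref{prism-menger}. (One small slip: from $u\neq c_3$ alone you cannot conclude that $u$ is interior to $\alpha_3$, since $u$ could be the other end $d_3$; what is true is that if $\alpha_3$ has more than two vertices then at least one of $u,v$ is interior, which is all you need.)

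The converse, however, has a genuine gap exactly where you flag ``the main obstacle'': you set up the right extremal choice (minimize $|\alpha_3|$ subject to $uv\in E(\alpha_3)$), but the contradiction is only a plan. Your mechanism --- a third neighbour $r$ of the interior vertex $u$ --- breaks down when $r$ lies outside $V(C)\cup V(D)\cup V(\alpha_1)\cup V(\alpha_2)\cup V(\alpha_3)$: a single edge $ur$ into new territory gives nothing to reroute through. The paper instead uses $3$-connectivity to extract an entire path $\beta$ in $G-\{x,y\}$ from $V(\alpha_3)-\{x,y\}$ to $X:=V(C)\cup V(D)\cup V(\alpha_1)\cup V(\alpha_2)$, where $x$ and $y$ are the ends of $\alpha_3$; it then normalizes so that $uv$ lies on the segment of $\alpha_3$ between the attachment point $a$ of $\beta$ and $y$, and uses the $e$-Dirac hypothesis to rule out $\beta$ landing on $D$ or in the interior of $\alpha_1$ or $\alpha_2$ (such a landing would produce a cycle through $e$ avoiding $V(C)$). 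Only after its far end $b$ is forced onto $C$ can one form a new cycle from $a,\alpha_3,x$, an arc of $C$, and $\beta$, and redistribute the arcs of $C$ among $\alpha_1$ and $\alpha_2$ so that $a,\alpha_3,y$ becomes a strictly shorter third path still containing $uv$. These two steps --- replacing the edge $ur$ by a path back to the structure, and the $e$-Dirac argument pinning that path's far end to $C$ --- are the substance of the proof and are missing from your proposal. Note also that the $2$-connectivity of $G-\{u,v\}$ is needed only in the forward direction; the paper's converse does not invoke it, contrary to your guess.
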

\begin{proof}
Suppose that $G$ has such cycles $C$ and $D$. If both $u$ and $v$ are in one of these cycles, say $C$, then we may choose $C$ in such a way that $uv\in E(C)$; by applying Menger's Theorem on $G$ to obtain three vertex-disjoint $(V(C),V(D))$-paths, we get a prism-minor of $G$ using $e$. So, assume that $u\in C$ and $v\in D$. Now we apply Menger's Theorem on $G-\{u,v\}$ to obtain two $(V(C)-u,V(D)-v)$ vertex disjoint-paths that, together with $u,v$, are three vertex-disjoint $(V(C),V(D))$-paths. So, $G$ has a prism-minor using $uv$ in all cases.

Conversely, suppose that $G$ has a prism minor using $uv$. If $G$ is not $uv$-Dirac the result follows from Menger's Theorem. So, we may assume that $G$ is $uv$-Dirac and, by Lemma \ref{prism-menger}, $G$ has vertex-disjoint cycles $C$ and $D$ and vertex-disjoint paths $(V(C),V(D))$-paths $\alpha_1$, $\alpha_2$, and $\alpha_3$ such that $uv\in E(\alpha_3)$. Make the choice of these cycles and paths minimizing $|\alpha_3|$. If $\{u,v\}\cont V(C)\cup V(D)$, we have nothing to prove. So we may assume that $\alpha_3$ has an inner vertex in $\{u,v\}$. Let $x$ and $y$ be the endvertices of $\alpha_3$ in $C$ and $D$ respectively. Let $X:=V(C)\cup V(D)\cup V(\alpha_1)\cup V(\alpha_2)$. As $G$ is $3$-connected, there is a $(V(\alpha_3)-\{x,y\},X)$ path $\beta$ in $G-\{x,y\}$. Let $a$ and $b$ be the endvertices of $\beta$, with $a\in V(\alpha_3)$. We may assume that the edge $uv$ is in the path $a,\alpha,y$. If $b\in V(D)\cup \int(\alpha_i)$ for some $i\in\{1,2\}$, then $D\cup\beta\cup(a,\alpha_3,y)\cup(a,\alpha_i,y)$ has a cycle containing $e$ and avoiding $V(C)$, a contradiction. So $b\in V(C)$. Let $c_1$ and $c_2$ be the respective endvertices of $\alpha_1$ and $\alpha_2$ in $C$. Now $C\cup\beta\cup(x,\alpha_3,a)$ has the cycle $C':=a,\alpha_3,x,C-c_1,b,\beta,a$. For some $\{i,j\}=\{1,2\}$, the cycles $C'$ and  $D$ and the vertex-disjoint $(V(C'),V(D))$-paths $(x,C-c_j,c_i,\alpha_i)$, $(b,C-c_i,c_j,\alpha_j)$ and $a,\alpha_3,y$ have the property that $uv\in E(b,\alpha_3,y)$ and $|a,\alpha_3,y|<|\alpha_3|$, contradicting the minimality of $|\alpha_3|$.
\end{proof}

\begin{lemma}\label{prism c}
If $G$ is a type (c) $e$-Dirac graph with at least six vertices, then $G$ has a prism-minor using $e$ or $G\cong W_n$, $K_{3,n}$, $K'_{3,n}$, $K''_{3,n}$ or $K'''_{3,n}$ for some $n\ge 3$.
\end{lemma}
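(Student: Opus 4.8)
The plan is to assume $G$ has no prism-minor using $e$ and to deduce that $G$ is one of the listed graphs. First I would set up a reduction. Since $|V(G)|\ge 6$ and $G$ is $e$-Dirac, Lemma~\ref{complete} gives that $\{u_1,u_2\}$ lies in a vertex-cut, so $S$ is defined; as $G$ is a type (c) graph, Theorem~\ref{main-cor}(c) applies, so $|S|=2$, write $S=\{x,y\}$, and one of (c1)--(c3) holds for a suitable choice of $S$ and ordering $\{i,j\}=\{1,2\}$. Because $|S|=2$ is minimal, no vertex $w$ makes $\{u_1,u_2,w\}$ a cut and $\{u_1,u_2\}$ is not a cut, so $G-\{u_1,u_2\}$ is $2$-connected and Lemma~\ref{prism-creterion} applies: $G$ has a prism-minor using $e$ if and only if $G$ has vertex-disjoint cycles $C,D$ with $\{u_1,u_2\}\subseteq V(C)\cup V(D)$. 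Since $G$ is $e$-Dirac, it has no vertex-disjoint cycles with $u_1$ and $u_2$ both lying on one of them (otherwise reroute that cycle through $e$, exactly as in the proof of Lemma~\ref{prism-creterion}). Hence the standing hypothesis becomes: $G$ has no two vertex-disjoint cycles, one through $u_1$ and one through $u_2$. I will use this throughout, and try to pin down $G$.

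Next I would prove a ``one fat side'' principle: $G$ cannot have both a cycle through $u_1$ internally disjoint from $\{u_2,x,y\}$ (equivalently, $u_1$ with two neighbours in a single $u_1$-component) and a cycle through $u_2$ internally disjoint from $\{u_1,x,y\}$, since those would be disjoint. Combining this with the explicit structure from Lemma~\ref{components are rope-bridges}, I would argue that, possibly after swapping $u_1$ and $u_2$, one may assume $N_G(u_2)=\{x,y,u_1\}$, so that $u_2$ has degree $3$ and is adjacent to $x$ and to $y$, and the whole nontrivial structure is carried on the $u_1$-side; the remaining possibility, that both endpoints have degree $3$ with neighbourhood contained in $S$ together with the other endpoint, forces $|V(G)|\le 4$ by $3$-connectivity and is excluded. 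In case (c3) the $u_1$-side is a union of fans and wheels with hub $u_1$: if one of them has two adjacent rim-vertices $v_1,v_2$, then $D:=u_1v_1v_2u_1$ is a triangle avoiding $\{x,y,u_2\}$, while $x$ and $y$ remain joined in $G-\{u_1,u_2\}$ after deleting $v_1,v_2$ (through a second component), so adding the edges $u_2x,u_2y$ gives a cycle through $u_2$ disjoint from $D$ --- a contradiction. Hence every such component is a single vertex joined to $u_1,x,y$, and a direct check of which of the edges $u_1u_2$, $xy$ are present identifies $G$ with one of $K_{3,n},K'_{3,n},K''_{3,n},K'''_{3,n}$ (these are exactly the configurations showing those graphs are type (c) $e$-Dirac).

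It remains to handle case (c2): a single $u_1$-component $J$ with $R:=J+\{u_1,x,y\}$ a $(u_1,x,y)$-rope-bridge and $N_G(u_2)=\{x,y,u_1\}$. Here I would apply Lemma~\ref{rope bridge is a minor} to place $R$, up to relabelling, inside one of the graphs of Figures~\ref{fig-side1}--\ref{fig-side2}, and then show that any ``width'' in $R$ is fatal. If $R$ has a long step, a crossing pair of steps, or two steps with endpoints in a common initial segment of one rope, then using (RB0)--(RB6) (or Lemma~\ref{rope-bridge criterion}) one reads off a cycle $D$ through $u_1$ avoiding $\{x,y\}$ together with an $x$--$y$ path in $R$ disjoint from $V(D)$; with the edges $u_2x$ and $u_2y$ this completes a cycle through $u_2$ disjoint from $D$, contradicting the standing hypothesis. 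Therefore $R$ has no long and no crossing steps, so $R$ is its two ropes joined by short, non-crossing rungs (with only the chords at $u_1$ allowed by (RB6)); then $G=R+u_2$ is directly identified --- it is a wheel $W_n$ with $e$ a rim edge, or, if $xy\in E(G)$, a $K_{3,n}$-type graph. This, with the previous paragraph, would complete the proof.

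The main obstacle is the rope-bridge case. One must verify that whenever $R$ has a nontrivial feature it simultaneously contains a short cycle through $u_1$ missing $\{x,y\}$ \emph{and} a compatible $x$--$y$ rerouting avoiding that cycle; the bookkeeping is delicate precisely because (RB6) permits $u_1$ to be joined to the first two vertices of each rope, so the ``obvious'' small cycles near $u_1$ must be picked with care, and one must track throughout whether $xy$ is an edge and how $u_2$ attaches. (Lemma~\ref{prism-menger} could replace Lemma~\ref{prism-creterion}, but the latter gives the cleanest reduction.)
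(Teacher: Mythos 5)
Your overall architecture matches the paper's: reduce via Lemma~\ref{prism-creterion} to the statement that $G$ has no pair of vertex-disjoint cycles covering $\{u_1,u_2\}$, force one endpoint of $e$ to have neighbourhood $S\cup\{\text{other endpoint}\}$, dispose of cases (c1) and (c3) (your triangle-plus-second-component argument there is essentially the paper's claim on the number of $u_j$-components), and reduce to the rope-bridge case. Up to that point the proposal is sound.

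The gap is in the rope-bridge case, and it is not mere bookkeeping: the three ``fatal configurations'' you list are the wrong ones. A crossing pair of steps does \emph{not} force a cycle through $u_1$ avoiding $\{x,y\}$ together with a disjoint $x$--$y$ path. Concretely, take $K'_{3,3}$ with parts $\{u_1,x,y\}$ and $\{u_2,x_1,y_1\}$, the extra edge being $x_1y_1$, and $e=u_1u_2$: here $J=\{x_1,y_1\}$ is a single connected $u_1$-component, $R=J+\{u_1,x,y\}$ is a rope bridge with ropes $u_1,x_1,x$ and $u_1,y_1,y$ and \emph{crossing} steps $x_1y$ and $y_1x$, the cycle $u_1,x_1,y_1,u_1$ does avoid $\{x,y\}$, but no complementary $x$--$y$ path exists and indeed $G$ has no prism-minor using $e$. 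Your argument would ``find'' a prism-minor here, and your final list for case (c2) (``wheel with $e$ a rim edge, or $K_{3,n}$-type if $xy\in E(G)$'') misses this graph, since $xy\notin E(G)$ in it. Similarly, a long step by itself is not fatal: the paper's claim~\ref{prism c-6} only excludes steps with two or more inner vertices, and only in the presence of a second step; steps with one inner vertex survive into claims~\ref{prism c-7}--\ref{prism c-9}, where they produce the $K''_{3,n},K'''_{3,n}$ exceptions rather than prism-minors. The configuration that actually is fatal --- and which is absent from your list --- is a pair of steps that neither cross nor share an endvertex (the paper's claim~\ref{prism c-5}): for such a pair one takes the cycle through $u_1$ formed by the \emph{initial} rope segments and the earlier step, and the $x$--$y$ path formed by the \emph{terminal} rope segments and the later step. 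Everything that remains after excluding that configuration (crossing pairs, shared endvertices, single long steps) is exactly where the exceptional graphs live, and separating them from the graphs with prism-minors is the content of claims~\ref{prism c-6}--\ref{prism c-9} and the endgame of the paper's proof; your plan as written replaces that analysis with implications that are false, so it does not go through.
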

\begin{proof}\setcounter{rot}{0}

Suppose that the lemma fails for $G$ and let $e=u_1u_2$. By Theorem \ref{main-cor}, there are vertices $x$ and $y$ such that $N_G(u_1)=\{u_2,x,y\}$, $N_G(u_2)=\{u_1,x,y\}$, or $\{u_1,u_2,x,y\}$ is a two-sided vertex cut of $G$. Let us use the terminology of that theorem. First we check:

\begin{rot}\label{prism c-0}
$d_G(u_1)=3$ or $d_G(u_2)=3$.
\end{rot}
\begin{rotproof}
Suppose the contrary. For $i=1,2$, let $K^i_1,\dots,K^i_{n_i}$ be the $u_i$-components. As $d_G(u_1),d_g(u_2)\ge 4$, it follows that $n_1,n_2\ge 1$. If, for some $\{z_1,z_2\}=\{x,y\}$ and for each $i=1,2$, there is a cycle $C_i$ of $(K^i_1\cup\cdots\cup K^i_{n_i})+\{z_i,u_i\}$ containing $u_i$, then $C_1$ and $C_2$ contradict Lemma \ref{prism-creterion}. So, we may assume that $H:=(K^1_1\cup\cdots\cup K^1_{n_1})+\{y,u_1\}$ has no cycle containing $u_1$.

If $n_1\ge 2$, then $H$ has a cycle of the form $u_1,K^1_1,y,K^1_2,u_1$, a contradiction. So $n_1=1$.

If $u_1y\in E(G)$, $H$ has a cycle of the form $u_1,K^1_1,y,u_1$, a contradiction again. So $u_1y\notin E(G)$.

As $d_G(u_1)\ge 4$, there are two edges from $u_1$ to $K^1_1$. So $K^1_1+u_1$ has a cycle containing $u_1$, which is a cycle of $H$, another contradiction.
\end{rotproof}

By \ref{prism c-0}, we may assume that $N_G(u_1)=\{u_2,x,y\}$. 

\begin{rot}\label{prism c-3}
There is exactly one $u_2$-component.
\end{rot}
\begin{rotproof}
Let $J_1,\dots, J_n$ be the distinct $u_2$-components with $n\ge 2$. 

Let us prove first that each one fo these components have exatly one vertex. Say that $J_1$ has more than one vertex. By the description of Theorem \ref{main-cor}, $J_1+\{u_1,x_1,x_2\}$ is either a wheel with $u_2$ as hub or a fan with $u_2$ as hub and $x$ and $y$ as endvertices. In each of these cases, $J_1+u_2$ has a cycle $C$ containing $u_2$. This cycle, together with a cycle of the form $u_1,x,J_2,y,u_1$ contradict Lemma \ref{prism-creterion}. So each $u_2$-component has an unique vertex.

This implies that each vertex of $J_1\cup\cdots\cup J_n\cup\{u_1\}$ has $\{x,y,u_2\}$ as its neighborhood. So, $G\cong K_{3,n+1}$, $K'_{3,n+1}$, $K''_{3,n+1}$ or $K'''_{3,n+1}$, implying the lemma, a contradiction.
\end{rotproof}

Now, by the description of Theorem \ref{main-cor}, $N_G(u_1)=\{x,y,u_2\}$ and $G-u_1$ is an $(u_2,x,y)$-rope bridge. We will denote by $\rho_x$ and $\rho_y$ its ropes. Instead of the ropes, we will argue using the paths $\pi_x:=\rho_x,x,u_1$ and $\pi_y:=\rho_y,y,u_1$ because they have a certain symmetry regarding $u_2$ and $u_1$. Let $\pi_x=u_2,x_1,\dots,x_{m_x},u_2$ and $\pi_y:=u_1,y_1,\dots,y_{m_y},u_1$. For a step $\sigma$ and $k\in\{1,2\}$, we denote by $C_k(\sigma)$ the cycle $u_k,\pi_x,x_\sigma,\sigma,y_\sigma,u_k$. 

\begin{rot}\label{prism c-5}
Each pair of steps either cross or have a common endvertex.
\end{rot}
\begin{rotproof}
If the claim fails, there are steps $\alpha$ and $\beta$ such that $u_1$, $z_\alpha$, $z_\beta$ and $u_2$ appear in this order in $\pi_z$ for each $z\in \{x,y\}$. Using Lemma \ref{prism-creterion} for the cycles $C_1(\alpha)$ and $C_2(\beta)$ we conclude that $G$ has a prism minor using $e$, a contradiction.
\end{rotproof}

\begin{rot}\label{prism c-6}
Each step has at most one inner vertex.
\end{rot}
\begin{rotproof}
Suppose that the claim fails and suppose that $\alpha:=v_1,\dots,v_n$ is a step with $n\ge 4$, $v_1\in V(\pi_x)$ and $v_n\in V(\pi_y)$. If $\alpha$ is the unique step, then $G$ is a wheel with $u_2$ as hub and the lemma holds. So, there is another step $\beta$. Now the cycles $u_2,v_1,v_3,u_1$ and $C_1(\beta)$ yield the existence of a prism-minor of $G$ using $e$ by  Lemma \ref{prism-creterion}, a contradiction.
\end{rotproof}

\begin{rot}\label{prism c-7}
Let $z\in\{x,y\}$ and let $z_k$ be an inner vertex of $\pi_z$. Suppose that $uz_k\in E(G)-E(\pi_z)$ or a step with an inner vertex contains $z_k$. Then no step has an endvertex in $\{z_{k+1},\dots,z_{m_z}\}$.
\end{rot}
\begin{rotproof}
Suppose that the claim fails. Say that $z=x$. So, there is a step $\beta$ arriving in $x_l$ for some $k<l\le m_x$. If $ux_k\in E(G)-E(\pi_x)$, the cycles $u,\pi_x,z_k,u$ and $C_1(\beta)$ contradict Lemma \ref{prism-creterion} since $G$ has no prism-minor using $e$. So assume that a step $\alpha$ with an inner vertex $w$ has $z_k$ as endvertex. Now $u_2,\pi_x,z_k,\alpha,w,u_2$ and $C_1(\beta)$ contradict Lemma \ref{prism-creterion}.
\end{rotproof}

\begin{rot}\label{prism c-8}
Each pair of steps have a common vertex.
\end{rot}
\begin{rotproof}
By \ref{prism c-6}, it suffices to prove that no pair of steps cross. Suppose that $\alpha$ and $\beta$ are crossing steps. It follows from \ref{prism c-7} that these steps have no inner vertices. So, we may assume that there are indices $1\le a<c \le m_x$ and $1\le b\le d\le m_y$ such that $\alpha=x_a,y_c$ and $\beta=y_b,x_d$. 

Let us check that $c=a+1$. Suppose for a contradiction that there is an index $a<k<c$. As each step crosses at most one other step by (RB3), then no step has endvertex in $x_k$. Moreover, by \ref{prism c-7}, $ux_k\notin E(G)$. This implies that the degree of $x_k$ is two, a contradiction. So, $c=a+1$ and, analogously, $d=b+1$.

Let us check now that $a=1$. Suppose that $a>1$. As $d_G(x_1)\ge 3$, there is a step $\gamma$ with $x_1$ as endvertex. As each step crosses at most one other step, $\gamma$ does not cross $\alpha$ nor $\beta$. So, $\gamma$ intersects both $\alpha$ and $\beta$ by \ref{prism c-5}. But, for this to happen, it is necessary that $\gamma$ share a common endvertex in $\pi_y$ with both $\alpha$ and $\beta$, a contradiction. So $a=1$. Analogously, $b=1$.

Let us check that $m_x=2$. Suppose that $m_x\ge 3$. By (RB6) $ux_3\notin E(G)$. So, there is a step $\gamma$ arriving at $x_3$. As argued in the previous paragraph, $\gamma$ share a common vertex in $\pi_y$ with both $\alpha$ and $\beta$, a contradiction. Therefore, $m_x=2$ and, analogously, $m_y=2$.

If there are no other steps than $\alpha$ and $\beta$, then $G\cong K_{3,3}$, so there are other steps. As no other steps cross $\alpha$ nor $\beta$, each other step has endvertices in $\{x_1,y_1\}$ or $\{x_2,y_2\}$.

First suppose that there is a step $\gamma$ with endvertices in $\{x_1,y_1\}$. By \ref{prism c-7}, $\gamma$ may not have inner steps and this establishes the uniqueness of $\gamma$. By \ref{prism c-5}, there is no steps with endevertices in $\{x_2,y_2\}$. So $\alpha$, $\beta$ and $\gamma$ are the unique steps. By (RB6), $u_2x_2,u_2y_2\notin E(G)$ and, therefore, $G\cong K'_{3,3}$, a contradiction.

So, all steps differing from $\alpha$ and $\beta$ have endvertices in $\{x_2,y_2\}$. Note that $N_G(u_1)=N_G(x_1)=N_G(y_1)=\{u_2,x_2,y_2\}$. If $w\in V(G)-\{u_1,u_2,x_1,x_2,y_1,y_2\}$, then $w$ is an inner vertex of a step with endvertices in $\{x_2,y_2\}$. By \ref{prism c-6}, $N_G(w)=\{u_2,x_2,y_2\}$. this implies that each vertex out of $\{u_2,x_2,y_2\}$ have this set as neighborhood and $G\cong K_{3,n}$, $K'_{3,n}$, $K''_{3,n}$ or $K'''_{3,n}$ for some $n\ge 3$; but this implies the lemma.
\end{rotproof}

\begin{rot}\label{prism c-9}
$x_1,y_1$ is the unique step with $x_1$ and $y_1$ as endvertices, but not the unique step.
\end{rot}
\begin{rotproof}
First we prove that there is a step $\beta$ with $x_1$ and $y_1$ as endvertices. Indeed, as $d_G(x_1), d_G(y_1)\ge 3$, there must be a step $\alpha$ containing $x_1$ and a step $\beta$ containing $y_1$. We may assume that $\alpha$ contains a vertex $y_k$ with $k>1$. If the endvertex of $\beta$ in $\pi_x$ is not $x_1$, then $\alpha$ and $\beta$ cross, contradicting \ref{prism c-8}. So there is a step $\beta$ with $x_1$ and $y_1$ as endvertices. 

Now let us prove the uniqueness of $\beta$. Suppose that $\gamma$ is a second step with endvertices in $x_1$ and $y_1$. As $G$ has no parallel edges, one of $\gamma$ or $\beta$ has an inner vertex. By \ref{prism c-7} no step meets $Z:=\{x_2,\dots,x_m,y_2,\dots,y_n\}$. By (RB6) there is no edge joining $u_2$ and a vertex of $Z$. Thus the vertices of $Z$ have degree two and $Z$ must be empty. This implies that each vertex $z \in V(G)-\{u_1,u_2,x_1,y_1\}$ is an inner vertex of some step with $x_1$ and $y_1$ as endvertices; by \ref{prism c-6} $N_G(z)=\{u_2,x_1,y_1\}$ and, as $N_G(u_1)=\{u_2,x_1,y_1\}$, this implies that $G\cong K_{3,3}$, $K'_{3,3}$, $K''_{3,3}$ or $K'''_{3,3}$, a contradiction. Thus the uniqueness of $\beta$ as a step with endvertices in $\{x_1,y_1\}$ is established.

Let us prove that $\beta$ is not the unique step. Assume the contrary. This implies that no step has an endvertex in the set $Z$ defined in the last paragraph. Thus $u_2z\in E(G)$ for all $z\in Z$. This implies that $N_G(u_2)=V(G)-u_2$. But the unique edges not incident to $u_2$ are those in $E(\pi_x)-u_2x_1$, $E(\pi_y)-u_2y_1$ or $E(\beta)$. But $u_1,\pi_x,x_1,\beta,y_1,\pi_y,u_1$ induces a cycle in $G$, and, therefore, $G$ is a wheel with $u_2$ as hub, a contradiction.

As there is a second step $\gamma$, by the uniqueness of $\beta$, $\gamma$ arrives at a vertex of $Z$, then by \ref{prism c-7}, $\beta$ has no inner vertex and the claim holds.
\end{rotproof}

Now we finish the proof. By \ref{prism c-9}, there is a step $\alpha\neq x_1,y_1$. By \ref{prism c-8} and by the uniqueness of $x_1y_1$ established in \ref{prism c-9}, we may assume that the endvertices of $\alpha$ are $y_1$ and $x_k$ for some $2\le k\le m_x$. Choose $\alpha$ with $k$ as small as possible. 

By \ref{prism c-8}, each step must intersect $\alpha$ and $x_1,y_1$. As $x_1\notin V(\alpha)$, each steps contains $y_1$. Therefore, for  $2\le l \le n$, there is no step arriving in $y_l$. Moreover, by (RB6), $u_2y_l\notin E(G)$. So $d_g(y_l)=2$ and such an index $l$ may not exist. So, $m_y=1$.

Let us check that $k=2$. Suppose for a contradiction that there is an index $1<l<k$. By \ref{prism c-7}, $u_2x_l\notin E(G)$ and, therefore there is a step $\beta$ arriving at $x_l$. But $y_1$ must be an endvertex of $\beta$, thus $\beta$ and $l$ contradict the minimality of $\alpha$ and $k$. So, $k=2$

As $|V(G)|\ge 6$, $m_x\ge 3$. Now we check, for each $l=1,\dots m_x$, that $u_2x_l\notin E(G)$ and that each step arriving at $x_l$ has no inner vertices. Indeed, for $l\ge 3$, this follows from (RB6). For $l\le 2$, this follows from \ref{prism c-7}. In particular this implies that  $x_ly_1$ is the unique edge of $G$ out of $\pi_x$ incident to $x_l$. Now $G$ is a wheel with $y_1$ as hub. This proves the lemma.
\end{proof}

Theorem \ref{thm-prism} now follows from Lemmas \ref{prism a}, \ref{prism b} and \ref{prism c}.


\end{document}